\numberwithin{equation}{section}
\newtheorem{theorem}{Theorem}[section]
\newtheorem{lemma}{Lemma}[section]
\newtheorem{proposition}{Proposition}[section]
\newtheorem{corollary}{Corollary}[section]
\newtheorem{remark}{Remark}[section]
\newtheorem{assumption}{Assumption}[section]
\newtheorem{condition}{Condition}[section]
\def\ba{\boldsymbol{a}}
\def\bb{\boldsymbol{b}}
\def\bc{\boldsymbol{c}}
\def\bi{\boldsymbol{i}}
\def\bk{\boldsymbol{k}}
\def\bl{\boldsymbol{l}}
\def\bm{\boldsymbol{m}}
\def\bq{\boldsymbol{q}}
\def\bu{\boldsymbol{u}}
\def\bv{\boldsymbol{v}}
\def\bx{\boldsymbol{x}}
\def\by{\boldsymbol{y}}
\def\bz{\boldsymbol{z}}
\def\bJ{\boldsymbol{J}}
\def\bM{\boldsymbol{M}}
\def\bX{\boldsymbol{X}}
\def\bY{\boldsymbol{Y}}
\def\bZ{\boldsymbol{Z}}
\def\bpi{\boldsymbol{\pi}}
\def\bnu{\boldsymbol{\nu}}
\def\btheta{\boldsymbol{\theta}}
\def\bzero{\mathbf{0}}
\def\bone{\mathbf{1}}
\def\calD{\mathcal{D}}
\def\calS{\mathcal{S}}
\def\scrI{\mathscr{I}}
\def\cp{\mbox{\rm cp}}
\def\diag{\mbox{\rm diag}}
\title{Asymptotic property of the occupation measures in a multidimensional skip-free Markov modulated random walk}
\author{Toshihisa Ozawa  \\ 
Faculty of Business Administration, Komazawa University \\
1-23-1 Komazawa, Setagaya-ku, Tokyo 154-8525, Japan \\
E-mail: toshi@komazawa-u.ac.jp
}
\date{}
\begin{document}

\maketitle

\begin{abstract}
We consider a discrete-time $d$-dimensional process $\{\bX_n\}=\{(X_{1,n},X_{2,n},...,X_{d,n})\}$ on $\mathbb{Z}^d$ with a background process $\{J_n\}$ on a countable set $S_0$, where individual processes $\{X_{i,n}\},i\in\{1,2,...,d\},$ are skip free. We assume that the joint process $\{\bY_n\}=\{(\bX_n,J_n)\}$ is Markovian and that the transition probabilities of the $d$-dimensional process $\{\bX_n\}$ vary according to the state of the background process $\{J_n\}$. This modulation is assumed to be space homogeneous. We refer to this process as a $d$-dimensional skip-free Markov modulate random walk. 
For $\by, \by'\in \mathbb{Z}_+^d\times S_0$, consider the process $\{\bY_n\}_{n\ge 0}$ starting from the state $\by$ and let $\tilde{q}_{\by,\by'}$ be the expected number of visits to the state $\by'$ before the process leaves the nonnegative area $\mathbb{Z}_+^d\times S_0$ for the first time. 
For $\by=(\bx,j)\in \mathbb{Z}_+^d\times S_0$, the measure $(\tilde{q}_{\by,\by'}; \by'=(\bx',j')\in \mathbb{Z}_+^d\times S_0)$ is called  an occupation measure. Our primary aim is to obtain the asymptotic decay rate of the occupation measure as $\bx'$ go to infinity in a given direction. We also obtain the convergence domain of the matrix moment generating function of the occupation measures.

\smallskip
{\it Key wards}: Markov modulated random walk, Markov additive process, occupation measure, asymptotic decay rate, moment generating function, convergence domain

\smallskip
{\it Mathematics Subject Classification}: 60J10, 60K25
\end{abstract}

%
%
\section{Introduction} \label{sec:intro}

%
For $d\ge 1$, we consider a discrete-time $d$-dimensional process $\{\bX_n\}=\{(X_{1,n},X_{2,n},...,X_{d,n})\}$ on $\mathbb{Z}^d$, where $\mathbb{Z}$ is the set of all integers, and a background process $\{J_n\}$ on a countable set $S_0=\{1,2,...\}$. We assume that each individual process $\{X_{i,n}\}$ is skip free, which means that its increments take values in $\{-1,0,1\}$. Furthermore, we assume that the joint process $\{\bY_n\}=\{(\bX_n,J_n)\}$ is Markovian and that the transition probabilities of the $d$-dimensional process $\{\bX_n\}$ vary according to the state of the background process $\{J_n\}$. This modulation is assumed to be space homogeneous. 
We refer to this process as a $d$-dimensional skip-free Markov modulate random walk (MMRW for short). The state space of the $d$-dimensional MMRW is given by $\mathbb{S}=\mathbb{Z}^d\times S_0$.
It is also a $d$-dimensional Markov additive process (MA-process for short) \cite{Miyazawa12}, where $\bX_n$ is the additive part and $J_n$ the background state. 
A discrete-time $d$-dimensional quasi-birth-and-death process  \cite{Ozawa13} (QBD process for short) is a $d$-dimensional MMRW with reflecting boundaries, where the process $\bX_n$ is the level and $J_n$ the phase. 
Stochastic models arising from various Markovian multiqueue models and queueing networks such as polling models and generalized Jackson networks with Markovian arrival processes and phase-type service processes can be represented as \textit{continuous-time} multidimensional QBD processes (in the case of two-dimension, see, e.g., \cite{Miyazawa15} and \cite{Ozawa13,Ozawa18}) and, by using the uniformization technique, they can be deduced to \textit{discrete-time} multidimensional QBD processes. 
%
%
It is well known that, in general,  the stationary distribution of a Markov chain can be represented in terms of its stationary probabilities on some boundary faces and its occupation measures. In the case of multidimensional QBD process, such occupation measures are given as those in the corresponding multidimensional MMRW. For this reason, we focus on multidimensional MMRWs and study their occupation measures, especially, asymptotic properties of the occupation measures. 
Here we briefly explain that the assumption of skip-free is not so restricted. For a given $k>1$, assume that, for $i\in\{1,2,...,d\}$, $X_{i,n}$ takes values in $\{-k,-(k-1),...,0,1,...,k\}$. For $i\in\{1,2,...,d\}$, let ${}^k\!X_{i,n}$ and ${}^k\!M_{i,n}$ be the quotient and remainder of $X_{i,n}$ divided by $k$, respectively, where ${}^k\!X_{i,n}\in\mathbb{Z}$ and $0\le {}^k\!M_{i,n} \le k-1$.  Then, the process $\{( {}^k\!X_{1,n},...,{}^k\!X_{d,n},({}^k\!M_{1,n},...,{}^k\!M_{d,n},J_n))\}$ becomes a $d$-dimensional MMRW \textit{with skip-free jumps}, where $({}^k\!X_{1,n},...,{}^k\!X_{d,n})$ is the level and $({}^k\!M_{1,n},...,{}^k\!M_{d,n},J_n)$ the background state. This means that any multidimensional MMRW \textit{with bounded jumps} can be reduced to a multidimensional MMRW \textit{with skip-free jumps}. 

%
Let $P=\left(p_{(\bx,j),(\bx',j')}; (\bx,j),(\bx',j')\in\mathbb{S}\right)$ be the transition probability matrix of the $d$-dimensional MMRW $\{\bY_n\}$, where $p_{(\bx,j)(\bx',j')}=\mathbb{P}(\bY_1=(\bx',j')\,|\,\bY_0=(\bx,j))$. 
By the property of skip-free, each element of $P$, say $p_{(\bx,j)(\bx',j')}$, is nonzero only if $\bx_1'-\bx_1\in\{-1,0,1\}^d$. By the property of space-homogeneity, for every $\bx,\bx'\in\mathbb{Z}^d$, $\bi\in\{-1,0,1\}^d$ and $j,j'\in S_0$, we have $p_{(\bx,j),(\bx+\bi,j')}=p_{(\bx',j),(\bx'+\bi,j')}$. 
Hence, the transition probability matrix $P$ can be represented as a block matrix in terms of only the following blocks:
\[
A_{\bi}=\left(p_{(\bzero,j)(\bi,j')}; j,j'\in S_0\right),\ \bi\in\{-1,0,1\}^d, 
\]
i.e., for $\bx,\bx'\in\mathbb{Z}^d$, block $P_{\bx,\bx'}=(p_{(\bx,j)(\bx',j')}; \,j,j'\in S_0)$ is given as 
\begin{equation}
P_{\bx,\bx'}
= \left\{ \begin{array}{ll}
A_{\bx'-\bx}, & \mbox{if $\bx'-\bx\in\{-1,0,1\}$}^d, \cr
O, & \mbox{otherwise}, 
\end{array} \right.
\end{equation}
where $\bzero$ and $O$ are a vector and matrix of $0$'s, respectively, whose dimensions are determined in context. 
Define a set $\mathbb{S}_+$ as $\mathbb{S}_+=\mathbb{Z}_+^d\times S_0$, where $\mathbb{Z}_+$ is the set of all nonnegative integers, and let $\tau$ be the stopping time at which the MMRW $\{\bY_n\}$ enters $\mathbb{S}\setminus\mathbb{S}_+$ for the first time, i.e., 
\[
\tau =\inf\{n\ge 0; \bY_n\in\mathbb{S}\setminus\mathbb{S}_+\}.
\]
For $\by=(\bx,j),\by'=(\bx',j')\in\mathbb{S}_+$, let $\tilde{q}_{\by,\by'}$ be the expected number of visits to the state $\by'$ before the process $\{\bY_n\}$ starting from the state $\by$ enters $\mathbb{S}\setminus\mathbb{S}_+$ for the first time, i.e.,
\begin{equation}
\tilde{q}_{\by,\by'} = \mathbb{E}\bigg(\sum_{n=0}^\infty 1\big(\bY_n=\by'\big)\, 1(\tau>n)\,\Big|\,\bY_0=\by \bigg), 
\label{eq:tildeq_def}
\end{equation}
where $1(\cdot)$ is an indicator function. For $\by\in\mathbb{S}_+$, the measure $(\tilde{q}_{\by,\by'}; \by'\in\mathbb{S}_+)$ is called an occupation measure. 
Note that $\tilde{q}_{\by,\by'}$ is the $(\by,\by')$-element of the fundamental matrix of the truncated substochastic matrix $P_+$ given as $P_+=\left(p_{\by,\by'}; \by,\by'\in\mathbb{S}_+\right)$, i.e., $\tilde{q}_{\by,\by'} = [\tilde{P}_+]_{\by,\by'}$ and 
\[
\tilde{P_+}
=\sum_{k=0}^\infty P_+^k,
\]
where, for example, $P_+^2=\left(p^{(2)}_{\by,\by'}\right)$ is defined by $p^{(2)}_{\by,\by'} = \sum_{\by''\in\calS_+} p_{\by,\by''}\,p_{\by'',\by'}$. $P_+$ governs transitions of $\{\bY_n\}$ on $\mathbb{S}_+$. 
Our primary aim is to obtain the asymptotic decay rate of the occupation measure $(\tilde{q}_{\by,\by'}; \by'=(\bx',j')\in\mathbb{S}_+)$ as $\bx'$ goes to infinity in a given direction. 
This asymptotic decay rate gives a lower bound for the asymptotic decay rate of the stationary distribution in a corresponding multidimensional QBD process in the same direction. Such lower bounds have been obtained for some kinds of multidimensional reflected process \textit{without background states}; for example, $0$-partially chains in \cite{Borovkov01}, also see comments on Conjecture 5.1 in \cite{Miyazawa12}. 
With respect to multidimensional reflected processes \textit{with background states}, such asymptotic decay rates of the stationary tail distributions in two-dimensional reflected processes have been discussed  in \cite{Miyazawa12,Miyazawa15} by using Markov additive processes and large deviations. Note that the asymptotic decay rates of the stationary distribution in a two-dimensional QBD process with finite phase states \textit{in the coordinate directions} have been obtained in \cite{Ozawa13,Ozawa18}.

%
As mentioned above, the $d$-dimensional MMRW $\{\bY_n\}=\{(\bX_n,J_n)\}$ is a $d$-dimensional MA-process, where the set of blocks, $\{ A_{\bi}; \bi\in\{-1,0,1\}^d \}$, corresponds to the kernel of the MA-process. 
For $\btheta\in\mathbb{R}^d$, let $A_*(\btheta)$ be the matrix moment generating function of one-step transition probabilities defined as
\begin{equation}
A_*(\btheta) = \sum_{\bi\in\{-1,0,1\}^d} e^{\langle \bi,\btheta \rangle} A_{\bi}, 
\label{eq:As}
\end{equation}
where $\langle \ba,\bb \rangle$ is the inner product of vectors $\ba$ and $\bb$. $A_*(\btheta)$ is the Feynman-Kac operator \cite{Ney87} for the MA-process. 
For $\bx,\bx'\in\mathbb{Z}_+^d$, define a matrix $N_{\bx,\bx'}$ as $N_{\bx,\bx'}=(\tilde{q}_{(\bx,j),(\bx',j')}; j,j'\in S_0)$ and $N_{\bx}$ as $N_{\bx}=(N_{\bx,\bx''}; \bx''\in\mathbb{Z}_+^d)$.  $\tilde{P}_+$ is represented as $\tilde{P}_+=(N_{\bx,\bx'}; \bx,\bx'\in\mathbb{Z}_+^d)$. 
For $\bx\in\mathbb{Z}_+^d$, let $\Phi_{\bx}(\btheta)$ be the matrix moment generating function of the occupation measures defined as 
\[
\Phi_{\bx}(\btheta)=\sum_{\bk\in\mathbb{Z}_+^d} e^{\langle \bk,\btheta \rangle} N_{\bx,\bk},
\]
which satisfies, for $j,j'\in S_0$, 
\begin{equation}
[\Phi_{\bx}(\btheta)]_{j,j'} = \mathbb{E}\bigg( \sum_{n=0}^\infty e^{\langle \bX_n,\btheta \rangle}\, 1(J_n=j')\, 1(\tau>n) \,\Big|\, \bY_0=(\bx,j) \bigg).
\end{equation}
For $\bx\in\mathbb{Z}_+^d$, define the convergence domain of the vector generating function $\Phi_{\bx}(\btheta)$ as
\[
\calD_{\bx} = \mbox{the interior of }\{\btheta\in\mathbb{R}^d; \Phi_{\bx}(\btheta)<\infty\}.
\]
Define point sets $\Gamma$ and $\calD$ as 
\begin{align*}
&\Gamma = \left\{\btheta\in\mathbb{R}^d; \cp(A_*(\btheta))>1 \right\}, \\
&\calD = \left\{\btheta\in\mathbb{R}^d; \mbox{there exists $\btheta'\in\Gamma$ such that $\btheta<\btheta'$} \right\}, 
\end{align*}
where $\cp(A)$ is the convergence parameter of matrix $A$. 
In the following sections, we prove that, for any nonzero vector $\bc\in\mathbb{Z}_+^d$ and for every $j,j'\in S_0$, 
\[
\lim_{k\to\infty} \frac{1}{k} \log \tilde{q}_{(\bzero,j),(k \bc,j')} = -\sup_{\btheta\in\Gamma} \langle \bc,\btheta \rangle.  
\]
Furthermore, using this asymptotic property, we also prove that, for any $\bx\in\mathbb{Z}_+^d$, $\calD_{\bx}$ is given by $\calD$.
In order to obtain these results, we use the matrix analytic method in Queueing theory by extending them to the case where the phase space is countably infinite. Especially, we give a certain expression for the convergence parameter of a nonnegative block tridiagonal matrix with a countable phase space and use it frequently.

%
The rest of the paper is organized as follows.  
In Sect.\ \ref{sec:RandGmatrix}, we extend some results in the matrix analytic method.
In Sect.\ \ref{sec:model}, we introduce some assumptions and give some properties of MMRWs, including a sufficient condition for the occupation measures in a $d$-dimensional MMRW to be finite.
In Sect.\ \ref{sec:QBDrepresentation}, we consider a kind of one-dimensional QBD process with countably many phases and obtain an upper bound for the convergence parameter of the rate matrix in the QBD process. 
Using the upper bound, we obtain the asymptotic decay rates of the occupation measures and the convergence domains of the matrix moment generating functions in Sect.\ \ref{sec:asymptotic}. In the same section, we also consider a single-server polling model with limited services and give some numerical examples. 
The paper concludes with a remark on an asymptotic property of multidimensional QBD processes in Sect.\ \ref{sec:concluding}. 

%
\bigskip
\textit{Notation for matrices.}\quad 
%
%
For a matrix $A$, we denote by $[A]_{i,j}$ the $(i,j)$-element of $A$. 
The transpose of a matrix $A$ is denoted by $A^\top$. 
The convergence parameter of a nonnegative matrix $A$ with a finite or countable dimension is denoted by $\cp(A)$, i.e., $\cp(A) = \sup\{r\in\mathbb{R}_+; \sum_{n=0}^\infty r^n A^n<\infty \}$. 
%
%

%
%
\section{Nonnegative block tridiagonal matrix and its properties} \label{sec:RandGmatrix}

Note that this section is described independently of the following sections. Our aim in the section is to give an expression for the convergence parameter of a nonnegative block tridiagonal matrix whose block size is countably infinite. The role of the Perron-Frobenius eigenvalue of a nonnegative matrix with a finite dimension is replaced with the reciprocal of the convergence parameter of a nonnegative matrix with a countable dimension. 

Consider a nonnegative block tridiagonal matrix $Q$ defined as 
\[
Q 
= \begin{pmatrix}
A_0 & A_1 & & & \cr
A_{-1} & A_0 & A_1 & & \cr
& A_{-1} & A_0 & A_1 & \cr
& & \ddots & \ddots & \ddots 
\end{pmatrix}, 
\]
where $A_{-1}$, $A_0$ and $A_1$ are nonnegative square matrices with a countable dimension, i.e., for $k\in\{-1,0,1\}$, $A_k=(a_{k,i,j}; i,j\in\mathbb{Z}_+)$ and every $a_{k,i,j}$ is nonnegative. 
We define a matrix $A_*$ as 
\[
A_*=A_{-1}+A_0+A_1. 
\]
%
%
Hereafter, we adopt the policy to give a minimal assumption in each place. First, we give the following conditions. 
\begin{condition} \label{cond:As_finite} 
\begin{itemize}
\item[(a1)] Both $A_{-1}$ and $A_1$ are nonzero matrices.
\end{itemize}
\end{condition}

\begin{condition}
\begin{itemize}
\item[(a2)] All iterates of $A_*$ are finite, i.e., for any $n\in\mathbb{Z}_+$, $A_*^n<\infty$. 
\end{itemize}
\end{condition}

Condition (a1) makes $Q$ a \textit{true block tridiagonal} matrix. 
Under condition (a2), all multiple products of $A_{-1}$, $A_0$ and $A_1$ becomes finite, i.e., for any $n\in\mathbb{N}$ and for any $\bi_{(n)}=(i_1,i_2, ..., i_n)\in\{-1,0,1\}^n$, $A_{i_1} A_{i_2} \cdots A_{i_n}<\infty$. 
Hence, for the triplet $\{ A_{-1}, A_0,  A_1\}$, we can define a matrix $R$ corresponding to the rate matrix of a QBD process and a matrix $G$ corresponding to the G-matrix. 
If $\cp(A_*)<\infty$, discussions for $Q$ may be reduced to probabilistic arguments. For example, if there exist an $s>0$ and positive vector $\bv$ such that $s A_* \bv\le \bv$, then $\Delta_{\bv}^{-1} s A_* \Delta_{\bv}$ becomes stochastic or substochastic, where $\Delta_{\bv}=\diag\, \bv$, and discussion for the triplet $\{ A_{-1}, A_0,  A_1\}$ can be replaced with that for $\{ \Delta_{\bv}^{-1}s A_{-1}\Delta_{\bv}, \Delta_{\bv}^{-1}s A_0\Delta_{\bv},  \Delta_{\bv}^{-1}s A_1\Delta_{\bv} \}$. However, in order to make discussion simple, we directly treat $\{ A_{-1}, A_0,  A_1 \}$ and do not use probabilistic arguments. 

Define the following sets of index sequences: for $n\ge 1$ and for $m\ge 1$, 
\begin{align*}
&\scrI_n = \biggl\{\bi_{(n)}\in\{-1,0,1\}^n;\ \sum_{l=1}^k i_l\ge 0\ \mbox{for $k\in\mathbb{N}_{n-1}$}\ \mbox{and} \sum_{l=1}^n i_l=0 \biggr\}, \\
&\scrI_{D,m,n} = \biggl\{\bi_{(n)}\in\{-1,0,1\}^n;\ \sum_{l=1}^k i_l\ge -m+1\ \mbox{for $k\in\mathbb{N}_{n-1}$}\ \mbox{and} \sum_{l=1}^n i_l=-m \biggr\}, \\
&\scrI_{U,m,n} = \biggl\{\bi_{(n)}\in\{-1,0,1\}^n;\ \sum_{l=1}^k i_l\ge 1\ \mbox{for $k\in\mathbb{N}_{n-1}$}\ \mbox{and} \sum_{l=1}^n i_l=m \biggr\}, 
\end{align*}
where $\bi_{(n)}=(i_1,i_2,...,i_n)$ and $\mathbb{N}_{n-1}=\{1,2,...,n-1\}$. Consider a QBD process $\{(X_n,J_n)\}$ on the state space $\mathbb{Z}_+^2$, where $X_n$ is the level and $J_n$ the phase. The set $\scrI_n $ corresponds to the set of all paths of the QBD process on which $X_0=l>0$, $X_k\ge l$ for $k\in\mathbb{N}_{n-1}$ and $X_n=l$, i.e., the level process visits state $l$ at time $n$ without entering states less than $l$ before time $n$. 
The set $\scrI_{D,m,n}$ corresponds to the set of all paths on which $X_0=l>m$, $X_k\ge l-m+1$ for $k\in\mathbb{N}_{n-1}$ and $X_n=l-m$, and $\scrI_{U,m,n}$ to that of all paths on which $X_0=l>0$, $X_k\ge l+1$ for $k\in\mathbb{N}_{n-1}$ and $X_n=l+m$. 
For $n\ge 1$, define $Q_{0,0}^{(n)}$, $D^{(n)}$ and $U^{(n)}$ as 
\begin{align*}
&Q_{0,0}^{(n)} = \sum_{\bi_{(n)}\in\scrI_n} A_{i_1} A_{i_2} \cdots A_{i_n},\quad
D^{(n)} = \sum_{\bi_{(n)}\in\scrI_{D,1,n}} A_{i_1} A_{i_2} \cdots A_{i_n}, \\
&U^{(n)} = \sum_{\bi_{(n)}\in\scrI_{U,1,n}} A_{i_1} A_{i_2} \cdots A_{i_n}. 
\end{align*}
Under (a2), $Q_{0,0}^{(n)}$, $D^{(n)}$ and $U^{(n)}$ are finite for every $n\ge 1$. Define $N$, $R$ and $G$ as 
\begin{align*}
&N = \sum_{n=0}^\infty Q_{0,0}^{(n)},\quad 
G = \sum_{n=1}^\infty D^{(n)}, \quad 
R = \sum_{n=1}^\infty U^{(n)},
\end{align*}
where $Q_{0,0}^{(0)}=I$. The following properties hold. 
\begin{lemma} \label{le:RandGmatrix_equations}
Assume (a1) and (a2). Then, $N$, $G$ and $R$ satisfy the following equations, including the case where both the sides of the equations diverge. 
\begin{align}
&R =  A_1 N, \\
&G = N  A_{-1}, \\
&R = R^2  A_{-1}+R  A_0+ A_1, \label{eq:Rmatrix_equation0} \\
&G =  A_{-1}+ A_0 G+ A_1 G^2, \label{eq:Gmatrix_equation0} \\
&N = I+A_0 N+A_1 G N = I+N A_0+N A_1 G. \label{eq:NandH_relation}
\end{align}
\end{lemma}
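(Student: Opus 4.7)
I would prove every identity by a single combinatorial device: decompose the index sequences defining $Q_{0,0}^{(n)}$, $D^{(n)}$, $U^{(n)}$ at a distinguished step (first step, last step, or first return time), read the decomposition as a bijection between the index sets, and then sum the resulting matrix identities. Because every matrix product on either side is a finite nonnegative matrix under (a2), Fubini/Tonelli swaps and term-by-term summation are valid on $[0,\infty]$; this is exactly what allows the stated identities to carry through even when some entries diverge.

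For the identities $R = A_1 N$ and $G = N A_{-1}$: any $\bi_{(n)} \in \scrI_{U,1,n}$ is forced to begin with $i_1 = 1$, since the partial sum after one step must be at least $1$. Stripping this first step yields a bijection between $\scrI_{U,1,n}$ and $\scrI_{n-1}$, giving $U^{(n)} = A_1 Q_{0,0}^{(n-1)}$ for $n \ge 1$; summing over $n$ produces $R = A_1 N$. The mirror argument---each $\bi_{(n)} \in \scrI_{D,1,n}$ must end with $i_n = -1$---gives $D^{(n)} = Q_{0,0}^{(n-1)} A_{-1}$ and hence $G = N A_{-1}$.

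For the $R$-equation, decompose $\bi_{(n)} \in \scrI_{U,1,n}$ with $n \ge 2$ by its last step $i_n$; since position $n-1$ is forced to be at least $1$, only $i_n \in \{-1,0\}$ occurs. The case $i_n = 0$ contributes $U^{(n-1)} A_0$; the case $i_n = -1$ contributes $V^{(n-1)} A_{-1}$, where $V^{(m)} := \sum_{\bi_{(m)} \in \scrI_{U,2,m}} A_{i_1} \cdots A_{i_m}$. To identify $\sum_{m\ge 1} V^{(m)}$ as $R^2$, decompose each path in $\scrI_{U,2,m}$ at the last time $t \le m-1$ at which its position equals $1$ (such $t$ exists because $i_1 = 1$ is again forced): the prefix lies in $\scrI_{U,1,t}$ and the suffix, viewed from level $1$, lies in $\scrI_{U,1,m-t}$. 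Thus $V^{(m)} = \sum_{t=1}^{m-1} U^{(t)} U^{(m-t)}$, so $\sum_m V^{(m)} = R^2$. Assembling these pieces yields $R = A_1 + R A_0 + R^2 A_{-1}$, and the fully analogous first-step decomposition applied to $\scrI_{D,1,n}$ gives $G = A_{-1} + A_0 G + A_1 G^2$.

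Finally, for $N = I + A_0 N + A_1 G N$, decompose $\bi_{(n)} \in \scrI_n$ by its first step. If $i_1 = 0$ the tail lies in $\scrI_{n-1}$, contributing $A_0 Q_{0,0}^{(n-1)}$; if $i_1 = 1$, let $s$ be the first return of the position process to $0$, so the segment $(i_2,\dots,i_s)$ shifted down by one level is in $\scrI_{D,1,s-1}$ and the tail $(i_{s+1},\dots,i_n)$ is in $\scrI_{n-s}$, contributing $A_1 D^{(s-1)} Q_{0,0}^{(n-s)}$. Summation gives $N - I = A_0 N + A_1 G N$; decomposing each $\scrI_n$-path by its last step symmetrically produces the twin equality $N = I + N A_0 + N A_1 G$. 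The main obstacle I expect is the careful verification that the chosen distinguished times (last visit to level $1$, first return to $0$) produce genuine bijections---neither over-counting nor omitting any index sequence---so that the matrix identities follow by straight summation of nonnegative terms; once this bookkeeping is pinned down, no analytic work beyond monotone interchange of sums is required.
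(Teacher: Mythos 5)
Your proposal is correct and follows essentially the same route as the paper's proof: strip the forced first/last step to get $R=A_1N$ and $G=NA_{-1}$, then decompose $\scrI_{U,1,n}$ (resp.\ $\scrI_{D,1,n}$) at its last (resp.\ first) step together with a last-visit-to-level-1 (resp.\ first-hit-of-level-$(-1)$) splitting of $\scrI_{U,2,m}$ (resp.\ $\scrI_{D,2,m}$) to obtain the quadratic equations, and a first/last-step decomposition of $\scrI_n$ for the two forms of the $N$-identity, with Fubini for nonnegative terms justifying all interchanges. The only cosmetic difference is that you work out the $R$-equation explicitly and call the $G$-equation a mirror, whereas the paper does the opposite; the underlying bijections are time-reversals of one another.
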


To make this paper self-contained, we give a proof of the lemma in Appendix \ref{sec:proof_RandGmatrix_equations}. 
From (\ref{eq:NandH_relation}), $N\ge I$ and $N\ge A_0 N+A_1 G N \ge A_0 +A_1 G$. Hence, if $N$ is finite, then $A_0 +A_1 G$ is also finite and we obtain 
\begin{equation}
(I-A_0-A_1 G)N=N(I-A_0-A_1 G)=I. 
\label{eq:NandH_relation2}
\end{equation}
We will use equation (\ref{eq:NandH_relation}) in this form. Here, we should note that much attention must be paid to matrix manipulation since the dimension of matrices are countably infinite, e.g., see Appendix A of \cite{Takahashi01}. 

For $\theta\in\mathbb{R}$, define a matrix function $A_*(\theta)$ as 
\[
A_*(\theta) = e^{-\theta} A_{-1} + A_0 + e^\theta A_1, 
\]
where $A_*=A_*(0)$. This $A_*(\theta)$ corresponds to a Feynman-Kac operator when the triplet $\{A_{-1},A_0,A_1\}$ is a Markov additive kernel (see, e.g., \cite{Ney87}). 
For $R$ and $G$, we have the following identity corresponding to the RG decomposition for a Markov additive process, which is also called a Winer-Hopf factorization, see identity (5.5) of \cite{Miyazawa11} and references therein. 
\begin{lemma} \label{le:As_WHfac}
Assume (a1) and (a2). If $R$, $G$ and $N$ are finite, we have, for $\theta\in\mathbb{R}$, 
\begin{align}
I- A_*(\theta) &= (I-e^\theta R) (I-H) (I-e^{-\theta} G), 
\label{eq:As_WHfact_RG}
\end{align}
where $H = A_0+ A_1 G = A_0+ A_1 N  A_{-1}$. 
\end{lemma}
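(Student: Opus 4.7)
The plan is to expand the right-hand side of (2.8) and match coefficients of $e^{-\theta}$, $e^0$, and $e^\theta$, reducing everything to the $R$- and $G$-equations from Lemma \ref{le:RandGmatrix_equations}. Since $R$, $G$, and $N$ are assumed finite, $H=A_0+A_1G=A_0+RA_{-1}$ is also finite (because $A_1G=A_1NA_{-1}=RA_{-1}$), so each parenthesized factor on the right is a well-defined countable-dimensional matrix and the associative regrouping below is justified termwise on nonnegative pieces.

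First I would compute the inner product
\[
(I-H)(I-e^{-\theta}G) = I - H - e^{-\theta}(I-H)G.
\]
Using (\ref{eq:Gmatrix_equation0}), $G = A_{-1}+A_0G+A_1G^2 = A_{-1}+HG$, so $(I-H)G = A_{-1}$. Hence the inner product simplifies to $I - H - e^{-\theta}A_{-1}$. Multiplying on the left by $I-e^\theta R$ gives
\[
(I-e^\theta R)(I-H)(I-e^{-\theta}G)
= I - H - e^{-\theta}A_{-1} - e^\theta R(I-H) + RA_{-1}.
\]
Using (\ref{eq:Rmatrix_equation0}) in the form $R = RA_0 + R^2A_{-1} + A_1$, together with $A_1G = RA_{-1}$, one has $R(I-H) = R - RA_0 - RA_1G = R - RA_0 - R^2A_{-1} = A_1$. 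Substituting and simplifying, the right-hand side becomes
\[
I - (H - RA_{-1}) - e^{-\theta}A_{-1} - e^\theta A_1 = I - A_0 - e^{-\theta}A_{-1} - e^\theta A_1 = I - A_*(\theta),
\]
where in the last step I used $H - RA_{-1} = A_0$, which follows directly from $H = A_0 + RA_{-1}$.

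The main subtlety, rather than any algebraic difficulty, is handling the countable dimension. One has to verify that each product and sum above is finite before applying distributivity: finiteness of $R$, $G$, $N$ ensures finiteness of $H$, $RA_{-1}$, $HG$, $R(I-H)$, and each individual term in the expansion, so the rearrangement above is legitimate entry by entry (no cancellations between infinite quantities are invoked). I would make this bookkeeping explicit as a short preliminary remark, mirroring the care urged after (\ref{eq:NandH_relation2}). With that caveat, the identity (\ref{eq:As_WHfact_RG}) follows from the two scalar identities $(I-H)G=A_{-1}$ and $R(I-H)=A_1$, each of which is one line away from the $G$- and $R$-equations in Lemma \ref{le:RandGmatrix_equations}.
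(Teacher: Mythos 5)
Your proof is correct, and it takes a somewhat different route from the paper's. The paper's proof starts from $I-A_*(\theta)$ and builds the triple product by two successive substitutions, hinging on the resolvent identity $N(I-H)=(I-H)N=I$ from (\ref{eq:NandH_relation2}) together with $R=A_1N$ and $G=NA_{-1}$. You instead expand the product and collapse it using the two compact consequences $(I-H)G=A_{-1}$ and $R(I-H)=A_1$, which you read off directly from the quadratic equations (\ref{eq:Gmatrix_equation0}) and (\ref{eq:Rmatrix_equation0}) once $H$ is written as $A_0+A_1G=A_0+RA_{-1}$. Both use $A_1G=RA_{-1}$ (i.e.\ $R=A_1N$, $G=NA_{-1}$). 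Your version is a bit more transparent algebraically, since the two one-line identities do all the work and the coefficient matching is immediate; the paper's version makes the role of the fundamental matrix $N$ and of (\ref{eq:NandH_relation2}) explicit, which fits the exposition since that identity is singled out in the text. Your attention to the bookkeeping over the countable index set is appropriate and matches the caution the paper flags after (\ref{eq:NandH_relation2}): the point is exactly that every subtraction is of two matrices already known to be entrywise finite (e.g.\ $HG=A_0G+A_1G^2$ and $RH=RA_0+R^2A_{-1}$ are finite termwise by Lemma \ref{le:RandGmatrix_equations}), so no ill-defined cancellation occurs.
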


\begin{proof}
Using identities (\ref{eq:NandH_relation2}), we obtain
\begin{align}
I- A_*(\theta) 
&= I-e^{-\theta} (I-H)N A_{-1} - A_0-e^\theta A_1 \cr
&= (I-A_0-e^\theta A_1 - A_1 N A_{-1})(I-e^{-\theta} G) \cr
&= (I-A_0-e^\theta A_1 N(I-H) - A_1 N A_{-1})(I-e^{-\theta} G) \cr
&= \bigl( (I-e^\theta R) (I-H) \bigr) (I-e^{-\theta} G),
\label{eq:As_WHfact_RG2}
\end{align}
where we use the fact that, by Lemma \ref{le:RandGmatrix_equations}, every term appeared in the expression such as $H N A_{-1} = A_0 G+A_1 G^2$ and $A_1 N A_{-1}=A_1 G$ are finite. Analogously, we have 
\begin{align}
I- A_*(\theta) 
&= I-e^{-\theta} A_{-1} - A_0-e^\theta A_1 N(I-H) \cr
&= (I-e^\theta R)(I-A_0-e^{-\theta} A_{-1}-A_1 N A_{-1}) \cr
&= (I-e^\theta R)(I-A_0-e^{-\theta} (I-H)N A_{-1}-A_1 N A_{-1})) \cr
&= (I-e^\theta R) \bigl( (I-H) (I-e^{-\theta} G) \bigr).
\label{eq:As_WHfact_RG3}
\end{align}
As a result, we obtain (\ref{eq:As_WHfact_RG}).
\end{proof}

Consider the following matrix quadratic equations of $X$:
\begin{align}
& X = X^2 A_{-1}+X A_0+A_1, \label{eq:Rmatrix_equation1} \\
& X = A_{-1}+A_0 X+A_1 X^2. \label{eq:Gmatrix_equation1}
\end{align}
By Lemma \ref{le:RandGmatrix_equations}, $R$ and $G$ are solutions to equations (\ref{eq:Rmatrix_equation1}) and (\ref{eq:Gmatrix_equation1}), respectively. 
Consider the following sequences of matrices:
\begin{align}
X_0^{(1)}=O,\quad X_n^{(1)} = \bigl(X_{n-1}^{(1)}\bigr)^2 A_{-1}+X_{n-1}^{(1)} A_0+A_1,\ n\ge 1, 
\label{eq:Xn1_iteration} \\
X_0^{(2)}=O,\quad X_n^{(2)} = A_{-1}+A_0 X_{n-1}^{(2)}+A_1 \bigl(X_{n-1}^{(2)}\bigr)^2,\ n\ge 1. 
\label{eq:Xn2_iteration}
\end{align} 
Like the case of usual QBD process, we can demonstrate that both the sequences $\{X_n^{(1)}\}_{n\ge 0}$ and $\{X_n^{(2)}\}_{n\ge 0}$ are nondecreasing and that if a nonnegative solution $X^*$ to equation (\ref{eq:Rmatrix_equation1}) (resp.\ equation (\ref{eq:Gmatrix_equation1})) exists, then for any $n\ge 0$, $X^*\ge X_n^{(1)}$ (resp.\ $X^*\ge X_n^{(2)}$). 
Furthermore, letting $R_n$ and $G_n$ be defined as 
\[
R_n = \sum_{k=1}^n U^{(k)},\quad 
G_n = \sum_{k=1}^n D^{(k)},
\]
we can also demonstrate that, for any $n\ge 1$, $R_n\le X_n^{(1)}$ and $G_n\le X_n^{(2)}$ hold. Hence, we immediately obtain the following facts. 
\begin{lemma} \label{le:RandGmatrix_solutions}
Assume (a1) and (a2). Then, $R$ and $G$ are the minimum nonnegative solutions to equations (\ref{eq:Rmatrix_equation1}) and (\ref{eq:Gmatrix_equation1}), respectively. 
Furthermore, we have $R=\lim_{n\to\infty} X_n^{(1)}$ and $G=\lim_{n\to\infty} X_n^{(2)}$. 
\end{lemma}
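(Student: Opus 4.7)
The plan is to run a standard monotone-iteration/squeeze argument: I will show (i) each sequence $\{X_n^{(1)}\}$, $\{X_n^{(2)}\}$ is nondecreasing, (ii) each is dominated entrywise by every nonnegative solution of its matrix equation, and (iii) each dominates the corresponding partial sum $R_n$ or $G_n$. Since $R_n \uparrow R$ and $G_n \uparrow G$ by definition, and since Lemma \ref{le:RandGmatrix_equations} supplies $R$ and $G$ as explicit nonnegative solutions of (\ref{eq:Rmatrix_equation1}) and (\ref{eq:Gmatrix_equation1}), the squeeze $R_n \le X_n^{(1)} \le R$ (and similarly for $G$) will force convergence, and part (ii) will immediately yield minimality.

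Parts (i) and (ii) are routine inductions driven by the monotonicity of the map $Y \mapsto Y^2 A_{-1} + Y A_0 + A_1$ (monotone because $A_{-1}, A_0, A_1 \ge O$). For (i), the base case $X_1^{(1)} = A_1 \ge O = X_0^{(1)}$ is immediate and the inductive step applies this map to $X_{n-1}^{(1)} \le X_n^{(1)}$. For (ii), if $X^* \ge O$ satisfies $X^* = (X^*)^2 A_{-1} + X^* A_0 + A_1$ then $X^* \ge X_0^{(1)} = O$, and the same monotonicity yields $X^* \ge X_n^{(1)} \Rightarrow X^* \ge X_{n+1}^{(1)}$. The arguments for $\{X_n^{(2)}\}$ and (\ref{eq:Gmatrix_equation1}) are verbatim analogs.

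The substantive step is (iii). I would establish the recursion
\begin{equation*}
U^{(k+1)} \;=\; U^{(k)} A_0 \;+\; \sum_{\substack{k_1+k_2=k \\ k_1, k_2 \ge 1}} U^{(k_1)} U^{(k_2)} A_{-1}, \qquad k \ge 1,
\end{equation*}
by conditioning a path $\bi_{(k+1)} \in \scrI_{U,1,k+1}$ on its last step $i_{k+1}$: the value $i_{k+1}=+1$ is impossible for $k \ge 1$ (it would force $\sum_{l=1}^{k} i_l = 0$, violating the defining constraint), the value $i_{k+1}=0$ identifies $(i_1,\dots,i_k) \in \scrI_{U,1,k}$, and the value $i_{k+1}=-1$ identifies $(i_1,\dots,i_k) \in \scrI_{U,2,k}$; a path in $\scrI_{U,2,k}$ then splits uniquely at its last visit to cumulative sum $1$ into a concatenation of two paths in $\scrI_{U,1,k_1}, \scrI_{U,1,k_2}$ with $k_1+k_2=k$. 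Summing this recursion for $k = 1, \dots, n$, using $U^{(1)} = A_1$, and bounding the convolution sum $\sum_{2\le k_1+k_2 \le n,\,k_1,k_2\ge 1} U^{(k_1)} U^{(k_2)}$ above by $R_n^2$ (the extra terms are nonnegative), I obtain $R_{n+1} \le A_1 + R_n A_0 + R_n^2 A_{-1}$. Combined with the inductive hypothesis $R_n \le X_n^{(1)}$ and the monotonicity of the iteration map, this delivers $R_{n+1} \le X_{n+1}^{(1)}$, closing the induction. A first-step decomposition of $\scrI_{D,1,k+1}$ gives the dual identity $D^{(k+1)} = A_0 D^{(k)} + A_1 \sum_{k_1+k_2=k} D^{(k_1)} D^{(k_2)}$ and hence $G_n \le X_n^{(2)}$ by the same argument.

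The conclusion is then a short squeeze: $R_n \le X_n^{(1)} \le R$ together with $R_n \uparrow R$ forces $X_n^{(1)} \uparrow R$ entrywise, and for any nonnegative solution $X^*$ of (\ref{eq:Rmatrix_equation1}), the inequality $X^* \ge X_n^{(1)}$ for every $n$ passes to the limit to give $X^* \ge R$. The $G$-side is identical. The main obstacle I anticipate is verifying the combinatorial identity for $U^{(k+1)}$ (and its $D$-analog) cleanly: one must check that the last-step partition of $\scrI_{U,1,k+1}$ is disjoint and exhaustive and that the split of $\scrI_{U,2,k}$ at the last hit of cumulative $1$ is a genuine bijection with $\bigsqcup_{k_1+k_2=k} \scrI_{U,1,k_1} \times \scrI_{U,1,k_2}$. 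Because the development here deliberately avoids any probabilistic normalization, this combinatorial bookkeeping is where real care is needed; every other step is a mechanical induction driven by nonnegativity of $A_{-1}, A_0, A_1$.
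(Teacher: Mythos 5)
Your proof is correct and follows the same route the paper sketches in the paragraph preceding the lemma: monotone iteration of the map $Y\mapsto Y^2A_{-1}+YA_0+A_1$, domination of $X_n^{(1)}$ by any nonnegative solution, and the combinatorial bound $R_n\le X_n^{(1)}$ (whose path decomposition is exactly the last-step / last-return analysis the paper uses in Appendix \ref{sec:proof_RandGmatrix_equations} to derive the $R$- and $G$-equations). The recursion $U^{(k+1)}=U^{(k)}A_0+\sum_{k_1+k_2=k}U^{(k_1)}U^{(k_2)}A_{-1}$ and the convolution bound by $R_n^2$ are verified correctly, so there is no gap.
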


%
If $A_*$ is irreducible, $A_*(\theta)$ is also irreducible for any $\theta\in\mathbb{R}$. We, therefore, give the following condition. 
\begin{condition}
\begin{itemize}
\item[(a3)] $A_*$ is irreducible.
\end{itemize}
\end{condition}
Let $\chi(\theta)$ be the reciprocal of the convergence parameter of $A_*(\theta)$, i.e., $\chi(\theta)=\cp(A_*(\theta))^{-1}$. 
We say that a positive function $f(x)$ is log-convex in $x$ if $\log f(x)$ is convex in $x$. A log-convex function is also a convex function. Since every element of $A_*(\theta)$ is log-convex in $\theta$, we see, by Lemma \ref{le:cp_convex} in Appendix \ref{sec:cp_convex}, that $\chi(\theta)$ satisfies the following property. 
\begin{lemma} \label{le:chi_convex}
Under (a1) through (a3), $\chi(\theta)$ is log-convex in $\theta\in\mathbb{R}$. 
\end{lemma}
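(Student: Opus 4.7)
The plan is to reduce the assertion to Lemma \ref{le:cp_convex} from the appendix, which states that, for an irreducible nonnegative matrix-valued function on $\mathbb{R}$ whose entries are log-convex, the reciprocal of the convergence parameter is itself log-convex. Thus the proof splits naturally into three steps: (i) verify entrywise log-convexity of $\theta\mapsto A_*(\theta)$; (ii) verify irreducibility of $A_*(\theta)$ for every $\theta\in\mathbb{R}$; (iii) invoke the appendix lemma.

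For step (i), observe that for each $i,j\in\mathbb{Z}_+$,
\[
[A_*(\theta)]_{i,j} = [A_{-1}]_{i,j}\, e^{-\theta} + [A_0]_{i,j} + [A_1]_{i,j}\, e^{\theta}.
\]
Each summand is either a nonnegative constant or of the form $c e^{k\theta}$ with $c\ge 0$ and $k\in\{-1,1\}$, so in each case the logarithm is an affine (hence convex) function of $\theta$. Since a finite sum of log-convex functions is log-convex (a direct consequence of H\"older's inequality), the entries of $A_*(\theta)$ are log-convex in $\theta$. For step (ii), note that $e^{\pm\theta}>0$ for every $\theta\in\mathbb{R}$, so the zero-pattern of $A_*(\theta)$ coincides with that of $A_*$, and irreducibility under (a3) is inherited. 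With (i) and (ii) in place, step (iii) is a direct application of Lemma \ref{le:cp_convex} and completes the proof.

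The conceptual work, which is hidden in the appendix lemma rather than in this statement, is extending the finite-dimensional Perron--Frobenius-based argument to a countable phase space. There one must work from the definition $\cp(A)^{-1}=\limsup_n ([A^n]_{i,j})^{1/n}$ (independent of $i,j$ under irreducibility) and transfer log-convexity from the entries of $A_*(\theta)^n$ -- which are finite sums of products of entrywise log-convex functions and therefore remain log-convex -- through the $1/n$-th root and the $\limsup$ to $\chi(\theta)$. The present lemma itself requires nothing beyond the three steps above, so the main obstacle is really the infrastructure result in the appendix rather than anything specific to the tridiagonal form of $Q$.
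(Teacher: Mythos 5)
Your proof is correct and follows exactly the paper's own route: the paper also deduces Lemma~\ref{le:chi_convex} by noting that every entry of $A_*(\theta)$ is log-convex in $\theta$ and then invoking Lemma~\ref{le:cp_convex}, with irreducibility supplied by (a3) and finiteness of iterates by (a2). You have merely spelled out the verifications that the paper leaves implicit.
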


Let $\gamma^\dagger$ be the infimum of $\chi(\theta)$, i.e., 
\[
\gamma^\dagger = \inf_{\theta\in\mathbb{R}} \chi(\theta) = \Big( \sup_{\theta\in\mathbb{R}} \cp(A_*(\theta)) \Big)^{-1},  
\]
and define a set $\bar{\Gamma}$ as 
\[
\bar{\Gamma} = \{\theta\in\mathbb{R}; \chi(\theta)\le 1\} = \{ \theta\in\mathbb{R}; \cp(A_*(\theta)) \ge 1 \}. 
\]
By Lemma \ref{le:chi_convex}, if $\gamma^\dagger<1$ and $\bar{\Gamma}$ is bounded, then $\bar{\Gamma}$ is a line segment and there exist just two real solutions to equation $\chi(\theta)=\cp(A_*(\theta))^{-1}=1$. We denote the solutions by $\underline{\theta}$ and $\bar{\theta}$, where $\underline{\theta}<\bar{\theta}$. 
When $\gamma^\dagger=1$, we define $\underline{\theta}$ and $\bar{\theta}$ as $\underline{\theta}=\min\{\theta\in\mathbb{Z}; \chi(\theta)=1\}$ and $\bar{\theta}=\max\{\theta\in\mathbb{Z}; \chi(\theta)=1\}$, respectively. It is expected that $\underline{\theta}=\bar{\theta}$ if $\gamma^\dagger=1$, but it is not obvious. 
If $\gamma^\dagger\le1$ and $\bar{\Gamma}$ is bounded, there exists a $\theta\in\bar{\Gamma}$ such that $\gamma^\dagger=\chi(\theta)$. 
We give the following condition. 
\begin{condition}
\begin{itemize}
\item[(a4)] $\bar{\Gamma}$ is bounded. 
\end{itemize}
\end{condition}

If $A_{-1}$ (resp.\ $A_1$) is a zero matrix, every element of $A_*(\theta)$ is monotone increasing (resp.\ decreasing) in $\theta$ and $\bar{\Gamma}$ is unbounded. Hence, if $\gamma^\dagger\le 1$, condition (a4) implies (a1).
The following properties correspond to those in Lemma 2.3 of \cite{Kijima93}. 
\begin{proposition} \label{pr:Gmatrix_existence} 
Assume (a2) through (a4). 
\begin{itemize} 
\item[(i)] If $\gamma^\dagger\le 1$, then $R$ and $G$ are finite. 
\item[(ii)] If $R$ is finite and there exist a $\theta_0\in\mathbb{R}$ and nonnegative nonzero vector $\bu$ such that $e^{\theta_0} \bu^\top R=\bu^\top$, then $\gamma^\dagger\le 1$. 
\item[(ii')] If $G$ is finite and there exist a $\theta_0\in\mathbb{R}$ and nonnegative nonzero vector $\bv$ such that $e^{\theta_0}  G \bv=\bv$, then $\gamma^\dagger\le 1$. 
\end{itemize}
\end{proposition}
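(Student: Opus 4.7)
The three statements split naturally into the finiteness direction (i) and the converse direction (ii)/(ii'). My plan is to prove (i) by producing strictly positive left and right subinvariant vectors of $A_*(\theta^\dagger)$ at a $\theta^\dagger\in\bar{\Gamma}$ and using them as test vectors against the monotone iterations $X_n^{(1)}\nearrow R$ and $X_n^{(2)}\nearrow G$ supplied by Lemma~\ref{le:RandGmatrix_solutions}. For (ii) and (ii') I will substitute the hypothesized eigenvector identity into the matrix quadratic equations of Lemma~\ref{le:RandGmatrix_equations}, obtaining a genuine left- (resp.\ right-) eigenvector equation for $A_*(\theta_0)$ (resp.\ $A_*(-\theta_0)$), from which $\cp(A_*(\pm\theta_0))\ge 1$ can be read off directly.

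For (i), since $\gamma^\dagger\le 1$ and $\bar{\Gamma}$ is bounded by (a4), I fix any $\theta^\dagger\in\bar{\Gamma}$, so that $\cp(A_*(\theta^\dagger))\ge 1$. Because $A_*(\theta^\dagger)$ inherits irreducibility from (a3), the Vere-Jones/Seneta theory for countable irreducible nonnegative matrices supplies strictly positive vectors $\bu,\bv$ with $\bu^\top A_*(\theta^\dagger)\le \bu^\top$ and $A_*(\theta^\dagger)\bv\le \bv$. A routine induction on (\ref{eq:Xn2_iteration}) then shows $X_n^{(2)}\bv\le e^{\theta^\dagger}\bv$ for every $n\ge 0$: using the inductive hypothesis once in the $A_0 X_n^{(2)}\bv$ term and twice in the $A_1(X_n^{(2)})^2\bv$ term, the right-hand side collapses to $e^{\theta^\dagger}A_*(\theta^\dagger)\bv\le e^{\theta^\dagger}\bv$. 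Lemma~\ref{le:RandGmatrix_solutions} passes this bound to the limit as $G\bv\le e^{\theta^\dagger}\bv<\infty$, and strict positivity of $\bv$ forces each entry of $G$ to be finite. The dual induction against (\ref{eq:Xn1_iteration}) using $\bu^\top$ yields $\bu^\top R\le e^{-\theta^\dagger}\bu^\top$, hence $R<\infty$.

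For (ii), left-multiplying (\ref{eq:Rmatrix_equation0}) by $\bu^\top$ and applying $\bu^\top R=e^{-\theta_0}\bu^\top$ once in the $RA_0$ term and twice in the $R^2 A_{-1}$ term, then clearing a factor of $e^{-\theta_0}$, yields
\begin{equation*}
\bu^\top = e^{-\theta_0}\bu^\top A_{-1}+\bu^\top A_0+e^{\theta_0}\bu^\top A_1 = \bu^\top A_*(\theta_0).
\end{equation*}
Irreducibility of $A_*(\theta_0)$, together with $\bu\ge\bzero$ and $\bu\ne\bzero$, promotes $\bu$ to strictly positive; iterating, $\bu^\top A_*(\theta_0)^n=\bu^\top$ gives $\bu^\top\sum_{n=0}^\infty r^n A_*(\theta_0)^n=\bu^\top/(1-r)<\infty$ for every $r<1$, which forces $\cp(A_*(\theta_0))\ge 1$, i.e., $\chi(\theta_0)\le 1$, and hence $\gamma^\dagger\le 1$. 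Statement (ii') is the mirror image: right-multiply (\ref{eq:Gmatrix_equation0}) by $\bv$ and use $G\bv=e^{-\theta_0}\bv$ to obtain $A_*(-\theta_0)\bv=\bv$, then conclude identically.

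The main obstacle is the Vere-Jones-type input used in (i): securing strictly positive left and right $\cp(A_*(\theta^\dagger))^{-1}$-subinvariant vectors for an irreducible nonnegative matrix of countably infinite dimension. In the finite-phase setting of \cite{Kijima93} this is immediate from Perron--Frobenius, but in the countable-phase setting one must either cite the Vere-Jones/Seneta generalization or construct such vectors directly by truncating to finite principal submatrices, extracting Perron eigenvectors, and passing to a limit (using $\cp(A_*(\theta^\dagger))\ge 1$ to control the growth). Once that existence is granted, the rest is the routine algebraic induction/substitution sketched above.
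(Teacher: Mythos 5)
Your proposal is correct and follows essentially the same route as the paper's proof: (i) by extracting positive subinvariant vectors of $A_*(\theta^\dagger)$ (the paper cites Pruitt, you cite the equivalent Vere--Jones/Seneta subinvariance theorem) and pushing them through the monotone iterations $X_n^{(1)}\nearrow R$, $X_n^{(2)}\nearrow G$ from Lemma~\ref{le:RandGmatrix_solutions}; and (ii), (ii') by substituting the hypothesized $R$- or $G$-eigenvector relation into the matrix quadratic equations of Lemma~\ref{le:RandGmatrix_equations} to produce $\bu^\top A_*(\theta_0)=\bu^\top$ (resp.\ $A_*(-\theta_0)\bv=\bv$) and reading off $\cp\ge 1$. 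The only cosmetic differences are that you take an arbitrary $\theta^\dagger\in\bar\Gamma$ rather than a minimizer of $\chi$ (equally valid, since only $\cp(A_*(\theta^\dagger))\ge 1$ is needed), and you spell out the geometric-series argument that the paper compresses into a single inequality.
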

\begin{proof}
{\it Statement (i).}\quad Assume $\gamma^\dagger\le 1$ and let $\theta^\dagger$ be a real number satisfying $\chi(\theta^\dagger)=\gamma^\dagger$. Since $A_*(\theta^\dagger)$ is irreducible, by Lemma 1 and Theorem 1 of \cite{Pruitt64}, there exists a positive vector $\bu$ satisfying $(\gamma^\dagger)^{-1} \bu^\top A_*(\theta^\dagger)  \le \bu^\top$. 
For this $\bu$, we obtain, by induction using (\ref{eq:Xn1_iteration}), inequality $e^{\theta^\dagger} \bu^\top X_n^{(1)} \le  \bu^\top$ for any $n\ge 0$. 
Hence, the sequence $\{X_n^{(1)}\}$ is element-wise nondecreasing and bounded, and the limit of the sequence, which is the minimum nonnegative solution to equation (\ref{eq:Rmatrix_equation1}), exists. Existence of the minimum nonnegative solution to equation (\ref{eq:Gmatrix_equation1}) is analogously proved. As a result, by Lemma \ref{le:RandGmatrix_solutions}, both $R$ and $G$ are finite.

{\it Statements (ii) and (ii').}\quad 
Assume the condition of Statement (ii). Then, we have 
\begin{align}
\bu^\top 
&=  e^{\theta_0} \bu^\top R
=  e^{\theta_0} \bu^\top ( R^2 A_{-1}+R A_0+A_1 ) 
= \bu^\top A_*(\theta_0), 
\label{eq:z0uR}
\end{align}
and this leads us to $\gamma^\dagger\le \chi(\theta_0)=\cp(A_*(\theta_0))^{-1}\le 1$. 
Statement (ii') can analogously be proved. 
\end{proof}

\begin{remark}
In statement (ii) of Proposition \ref{pr:Gmatrix_existence}, if such $\theta_0$ and $\bu$ exist, then, by (\ref{eq:z0uR}) and irreducibility of $A_*(\theta_0)$, we have $\theta_0=\theta^\dagger$ and $\bu$ is positive. An analogous result also holds for statement (ii'). 
\end{remark}

\begin{remark} \label{rm:chi_unbounded}
Consider the following nonnegative matrix $P$:
\[
P 
= \begin{pmatrix}
 \ddots & \ddots & \ddots & & & \cr
& A_{-1} & A_0 & A_1 & & \cr
& & A_{-1} & A_0 & A_1 & \cr
& & & \ddots & \ddots & \ddots  
\end{pmatrix}.  
\]
If the triplet $\{A_{-1},A_0,A_1\}$ is a Markov additive kernel, this $P$ corresponds to the transition probability matrix of a Markov additive process governed by the triplet. By Proposition \ref{pr:chi_unbounded} in Appendix \ref{sec:chi_unbounded}, if $P$ is irreducible, then $\chi(\theta)$ is unbounded in both the directions of $\theta$ and $\bar{\Gamma}$ is bounded. 
\end{remark}

%
For the convergence parameters of $R$ and $G$, the following properties holds (for the case where the dimension of $A_*$ is finite, see Lemma 2.2 of \cite{He09} and Lemma 2.3 of  \cite{Miyazawa09}). 
\begin{lemma} \label{le:RandG_cp}
Assume (a2) through (a4).  If $\gamma^\dagger\le 1$ and $N$ is finite, then we have 
\begin{equation}
\cp(R) = e^{\bar{\theta}},\quad 
\cp(G) = e^{-\underline{\theta}}. 
\end{equation}
\end{lemma}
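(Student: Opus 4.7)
The plan is to establish $\cp(R)=e^{\bar\theta}$ directly; the identity $\cp(G)=e^{-\underline\theta}$ then follows from the symmetry of the setup under the interchange $A_{-1}\leftrightarrow A_1$ and $\theta\leftrightarrow -\theta$, which swaps $R\leftrightarrow G$ and sends $\bar\theta$ to $-\underline\theta$. Both halves will rest on the Wiener--Hopf factorization of Lemma \ref{le:As_WHfac} together with the invertibility $(I-H)^{-1}=N$, which is available since $N<\infty$.

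For the lower bound $\cp(R)\ge e^{\bar\theta}$, I would fix any $\theta\in(\underline\theta,\bar\theta)$, so that $\chi(\theta)<1$. By irreducibility of $A_*(\theta)$ and Theorem 1 of \cite{Pruitt64}, there exists a positive vector $\bu$ with $\bu^\top A_*(\theta)\le\chi(\theta)\bu^\top$. The key intermediate step, which I would prove by induction on $n\ge 1$ via the recursion \eqref{eq:Xn1_iteration}, is
\[
\bu^\top X_n^{(1)}\le e^{-\theta}\chi(\theta)\bu^\top.
\]
The base case $n=1$ reduces to $e^\theta\bu^\top A_1\le\chi(\theta)\bu^\top$, which is immediate from the subinvariance. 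In the inductive step, I substitute the hypothesis into the quadratic term $(X_{n-1}^{(1)})^2A_{-1}$, collect powers of $\chi(\theta)$ using $\chi(\theta)\le 1$, and close the bound via a second application of the subinvariance. Passing to the limit via Lemma \ref{le:RandGmatrix_solutions} yields $\bu^\top(e^\theta R)\le\chi(\theta)\bu^\top$, so $\bu^\top(e^\theta R)^n\le\chi(\theta)^n\bu^\top$ and $\sum_{n\ge 0}\bu^\top(e^\theta R)^n\le(1-\chi(\theta))^{-1}\bu^\top$ is finite. Since $\bu$ is strictly positive, $\sum_n(e^\theta R)^n$ is finite entrywise, whence $\cp(R)\ge e^\theta$; letting $\theta\uparrow\bar\theta$ gives $\cp(R)\ge e^{\bar\theta}$. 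By the symmetric argument using \eqref{eq:Xn2_iteration} and a right subinvariant vector of $A_*(\theta)$, $\cp(G)\ge e^{-\underline\theta}$.

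For the upper bound $\cp(R)\le e^{\bar\theta}$, I would argue by contradiction. Assuming $\cp(R)>e^{\bar\theta}$, pick $\theta_0\in(\bar\theta,\log\cp(R))$, so that $(I-e^{\theta_0}R)^{-1}=\sum_n(e^{\theta_0}R)^n$ is finite and nonnegative. The lower bound $\cp(G)\ge e^{-\underline\theta}$ combined with $\theta_0>\bar\theta\ge\underline\theta$ gives $e^{-\theta_0}<e^{-\underline\theta}\le\cp(G)$, so $(I-e^{-\theta_0}G)^{-1}$ is likewise nonnegative and finite. Using $(I-H)N=I$ and the Wiener--Hopf factorization, a direct multiplication confirms that
\[
M:=(I-e^{-\theta_0}G)^{-1}\,N\,(I-e^{\theta_0}R)^{-1}
\]
is a nonnegative finite right inverse of $I-A_*(\theta_0)$. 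Iterating the resulting identity $M=I+A_*(\theta_0)M$ gives $M\ge\sum_{k=0}^{n}A_*(\theta_0)^k$ for every $n$, so $\sum_k A_*(\theta_0)^k$ is entrywise finite and $\chi(\theta_0)=\cp(A_*(\theta_0))^{-1}\le 1$. But by log-convexity of $\chi$ (Lemma \ref{le:chi_convex}) and the definition of $\bar\theta$ as the largest solution of $\chi(\theta)=1$, necessarily $\chi(\theta_0)>1$ whenever $\theta_0>\bar\theta$, a contradiction.

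The principal obstacle is the quadratic induction in the lower-bound argument: one must carry both an $e^{-\theta}$ factor (coming from the up-step in \eqref{eq:Xn1_iteration}) and a contractive factor $\chi(\theta)<1$ (coming from Pruitt subinvariance) simultaneously through the quadratic term. Tracking only the exponential factor would yield $\bu^\top R\le e^{-\theta}\bu^\top$, which is not strong enough to force summability of $\sum_n(e^\theta R)^n$, while tracking only the contraction would lose the sharp exponential rate. Once both factors are in place, the upper-bound half is a clean algebraic consequence of the Wiener--Hopf factorization combined with the lower bound for $G$.
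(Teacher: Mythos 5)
Your overall strategy coincides with the paper's: lower bound via Pruitt-type subinvariance and the monotone iterates $X_n^{(1)}$, upper bound via the Wiener--Hopf factorization combined with convexity of $\chi$. The lower-bound induction $\bu^\top X_n^{(1)}\le e^{-\theta}\chi(\theta)\bu^\top$ does in fact close (use the inductive bound twice in the quadratic term, observe the resulting sum is $\le e^{-\theta}\bu^\top A_*(\theta)$, then apply subinvariance once more), so that half is fine. However, the ``principal obstacle'' you identify is not a real one: the paper takes $\theta=\bar\theta$ and proves the weaker inequality $e^{\bar\theta}\bu^\top X_n^{(1)}\le\bu^\top$, which already gives $\cp(R)\ge e^{\bar\theta}$ directly via Cauchy--Hadamard (from $e^{n\bar\theta}\bu^\top R^n\le\bu^\top$ and $\bu>0$ one gets $\limsup_n([R^n]_{ij})^{1/n}\le e^{-\bar\theta}$); summability of $\sum_n e^{n\bar\theta}R^n$ is not required. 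Your extra $\chi(\theta)$ bookkeeping is correct but gains nothing over the paper's simpler route.

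The genuine gap is in the upper bound, in the sentence ``a direct multiplication confirms that $M:=(I-e^{-\theta_0}G)^{-1}\,N\,(I-e^{\theta_0}R)^{-1}$ is a nonnegative finite right inverse of $I-A_*(\theta_0)$.'' Nonnegativity is clear, but finiteness of $M$ is not: it is a triple product of countably infinite nonnegative matrices, each of which is finite entry-wise, yet whose product $\sum_{n,m\ge 0}e^{(m-n)\theta_0}\,G^nNR^m$ has no a priori reason to converge. Your whole contradiction rests on $M<\infty$ entry-wise, since $M\ge\sum_{k=0}^n A_*(\theta_0)^k$ is vacuous wherever $M=\infty$. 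There is also a second, subtler issue: deriving $(I-A_*(\theta_0))M=I$ from the factorization requires regrouping infinite products of matrices with mixed-sign entries (via $I-e^{\theta_0}R$ and $I-H$), and that regrouping needs justification of exactly the sort the paper is careful about in the proof of Lemma~\ref{le:As_WHfac}. The paper sidesteps both problems by never forming the full matrix $M$; it instead fixes a single column $\bv$ of $\tilde R(\bar\theta+\varepsilon)$ chosen so that $\bv$ and $R\bv$ are nonzero, applies the factorization to that one vector to produce $\by=(I-e^{-(\bar\theta+\varepsilon)}G)^{-1}N\bv$, and then deduces a subinvariant inequality $A_*(\bar\theta+\varepsilon)\by\le\by$. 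That is a substantially more modest claim to verify than finiteness of the full right inverse, and it is the step you would need to supply to make your version complete.

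Finally, your appeal to symmetry to transfer $\cp(R)=e^{\bar\theta}$ into $\cp(G)=e^{-\underline\theta}$ is legitimate: interchanging $A_{-1}\leftrightarrow A_1$ transposes $Q$, swaps $R$ and $G$ (up to transposition), and sends $\theta\mapsto-\theta$, hence $\bar\theta\mapsto-\underline\theta$. The paper instead repeats the subinvariance argument for $G$ with right eigenvectors; either route is acceptable.
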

\begin{proof}
Since $\gamma^\dagger\le 1$ and $\bar{\Gamma}$ is bounded, $\bar{\theta}$ and $\underline{\theta}$ exist and they are finite. Furthermore, $R$ and $G$ are finite. 
For a $\theta\in\mathbb{R}$ such that $\chi(\theta)\le 1$, let $\bu$ be a positive vector satisfying $\bu^\top A_*(\theta)\le \bu^\top$. Such a $\bu$ exists since $A_*(\theta)$ is irreducible. As mentioned in the proof of Proposition \ref{pr:Gmatrix_existence}, for $X^{(1)}_n$ defined by (\ref{eq:Xn1_iteration}), if $\chi(\theta)\le 1$, then we have $e^\theta \bu^\top X^{(1)}_n\le \bu^\top$ for any $n\ge 0$ and this implies $e^\theta \bu^\top R\le \bu^\top$. 
Analogously, if $\chi(\theta)\le 1$, then there exists a positive vector $\bv$ satisfying $A_*(\theta)\bv\le \bv$ and we have $e^{-\theta} G \bv\le \bv$.
Therefore, setting $\theta$ at $\bar{\theta}$, we obtain $e^{\bar{\theta}} \bu^\top R\le \bu^\top$, and setting $\theta$ at $\underline{\theta}$, we obtain $e^{-\underline{\theta}} G \bv\le \bv$. Since $\bu$ and $\bv$ are positive, this leads us to $\cp(R)\ge e^{\bar{\theta}}$ and $\cp(G)\ge e^{-\underline{\theta}}$. 

Next, in order to prove $\cp(R)\le e^{\bar{\theta}}$, we apply a technique similar to that used in the proof of Theorem 1 of \cite{Pruitt64}. Suppose $\cp(R)>e^{\bar{\theta}}$. Then, there exists an $\varepsilon>0$ such that 
\[
\tilde{R}(\bar{\theta}+\varepsilon)=\sum_{n=0}^\infty e^{(\bar{\theta}+\varepsilon) n} R^n < \infty. 
\]
This $\tilde{R}(\bar{\theta}+\varepsilon)$ satisfies $e^{\bar{\theta}+\varepsilon} R \tilde{R}(\bar{\theta}+\varepsilon) = \tilde{R}(\bar{\theta}+\varepsilon)-I \le \tilde{R}(\bar{\theta}+\varepsilon)$. Hence, for $j\in\mathbb{Z}_+$, letting $\bv_j$ is the $j$-th column vector of $\tilde{R}(\bar{\theta}+\varepsilon)$, we have $e^{\bar{\theta}+\varepsilon} R \bv_j \le \bv_j$.
Furthermore, we have $e^{\bar{\theta}+\varepsilon} R \tilde{R}(\bar{\theta}+\varepsilon) \ge  e^{\bar{\theta}+\varepsilon} R \ge e^{\bar{\theta}+\varepsilon} X_1^{(1)} = e^{\bar{\theta}+\varepsilon} A_1$, and condition (a4) implies $A_1$ is nonzero. Hence, for some $j\in\mathbb{Z}_+$, both $R\bv_j$ and  $\bv_j$ are nonzero. Set $\bv$ at such a vector $\bv_j$. 
We have $\cp(G)\ge e^{-\underline{\theta}}> e^{-(\bar{\theta}+\varepsilon)}$. Hence, using (\ref{eq:NandH_relation2}) and (\ref{eq:As_WHfact_RG}), we obtain  
\begin{equation}
(I-A_*(\bar{\theta}+\varepsilon)) (I-e^{-(\bar{\theta}+\varepsilon)} G)^{-1} N \bv = (I-e^{\bar{\theta}+\varepsilon} R)\bv \ge \bzero, 
\end{equation}
where $\by=(I-e^{-(\bar{\theta}+\varepsilon)} G)^{-1} N \bv = \sum_{n=0}^\infty e^{-n (\bar{\theta}+\varepsilon)} G^n\,N \bv\ge N \bv$. 
Suppose $N \bv=\bzero$, then we have $R \bv = A_1 N \bv = \bzero$ and this contradicts that $R \bv$ is nonzero. Hence, $N \bv$ is nonzero and $\by$ is also nonzero and nonnegative. 
Since $A_*(\bar{\theta}+\varepsilon)$ is irreducible, the inequality $A_*(\bar{\theta}+\varepsilon) \by \le \by$ implies that $\by$ is positive and $\cp(A_*(\bar{\theta}+\varepsilon))\ge 1$. This contradicts that $\cp(A_*(\bar{\theta}+\varepsilon))=\chi(\bar{\theta}+\varepsilon)^{-1}<\chi(\bar{\theta})^{-1}=1$, and we obtain $\cp(R)\le e^{\bar{\theta}}$. 
In a similar manner, we can also obtain $\cp(G)\le e^{-\underline{\theta}}$, and this completes the proof.
\end{proof}


Lemma \ref{le:RandG_cp} requires that $N$ is finite, but it cannot easily be verified since finiteness of $R$ and $G$ does not always imply that of $N$. We, therefore, introduce the following condition. 
\begin{condition}
\begin{itemize}
\item[(a5)] The nonnegative matrix $Q$ is irreducible. 
\end{itemize}
\end{condition}

Condition (a5) implies (a1), (a3) and (a4), i.e., under conditions (a2) and (a5), $A_{-1}$ and $A_1$ are nonzero, $A_*$ is irreducible and $\bar{\Gamma}$ is bounded. 
Let $\tilde{Q}$ be the fundamental matrix of $ Q$, i.e., $\tilde{Q}=\sum_{n=0}^\infty Q^n$. For $n\ge 0$, $Q_{0,0}^{(n)}$ is the $(0,0)$-block of $Q^n$, and $N$ is that of $\tilde{Q}$. Hence, we see that all the elements of $N$ simultaneously converge or diverge, finiteness of $R$ or $G$ implies that of $N$ and if $N$ is finite, it is positive. 
Furthermore, under (a2) and (a5), since $R$ is given as $R=A_1 N$ and $N$ is positive, each row of $R$ is zero or positive and we obtain the following proposition, which asserts that $R$ behaves just like an irreducible matrix. 
\begin{proposition} \label{pr:R_u}
Assume (a2) and (a5). If $R$ is finite, then it always satisfies one of the following two statements.
\begin{itemize}
\item[(i)] There exists a positive vector $\bu$ such that $e^{\bar{\theta}} \bu^\top R = \bu^\top$. 
\item[(ii)] $\sum_{n=0}^\infty e^{\bar{\theta}n} R^n <\infty$. 
\end{itemize}
\end{proposition}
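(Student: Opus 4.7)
The plan is to exploit the identity $R = A_1 N$ together with the positivity of $N$ (which holds under (a2) and (a5) once $R$ is finite) to expose a block-triangular structure of $R$, then apply the Vere-Jones dichotomy for irreducible nonnegative matrices to the resulting strictly positive sub-block.

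First I would split $S_0$ into $I_+ = \{i \in S_0 : \text{row $i$ of $A_1$ is nonzero}\}$ and $I_0 = S_0 \setminus I_+$; the set $I_+$ is nonempty because (a5) forces $A_1 \ne O$. Since $N>O$, the identity $R=A_1 N$ shows that every row of $R$ indexed by $I_+$ is strictly positive while every row indexed by $I_0$ vanishes identically. Arranging indices so that $I_+$ precedes $I_0$, one obtains
\[
R = \begin{pmatrix} B & C \\ O & O \end{pmatrix},\qquad R^n = \begin{pmatrix} B^n & B^{n-1}C \\ O & O \end{pmatrix}\quad (n \ge 1),
\]
with both $B$ and $C$ strictly positive; in particular $\cp(R)=\cp(B)$. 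Since $\bar{\theta}$ is well-defined (so $\gamma^\dagger \le 1$) and $N$ is finite, Lemma~\ref{le:RandG_cp} identifies this common value as $e^{\bar{\theta}}$.

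Next, $B$ is strictly positive and hence irreducible (indeed primitive), so the Vere-Jones dichotomy at the convergence parameter $\cp(B)=e^{\bar{\theta}}$ applies. If $B$ is transient at $e^{\bar{\theta}}$ then $\sum_{n \ge 0} e^{n\bar{\theta}} B^n < \infty$, and the block formula for $R^n$ immediately yields $\sum_{n\ge 0} e^{n\bar{\theta}} R^n < \infty$, i.e.\ statement (ii). If $B$ is recurrent at $e^{\bar{\theta}}$ then there is a strictly positive left eigenvector $\bar{\bu}$ on $I_+$ with $\bar{\bu}^\top B = e^{-\bar{\theta}} \bar{\bu}^\top$. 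I would extend $\bar{\bu}$ to a vector $\bu$ on $S_0$ by setting $\bu|_{I_+} = \bar{\bu}$ and $\bu|_{I_0} = e^{\bar{\theta}} \bar{\bu}^\top C$; the eigenvalue relation $e^{\bar{\theta}} \bu^\top R = \bu^\top$ then follows from a direct block computation using $\bar{\bu}^\top B = e^{-\bar{\theta}} \bar{\bu}^\top$, while strict positivity of $\bu$ on $I_0$ follows from strict positivity of $C$ and $\bar{\bu}$. This yields (i). When $I_0 = \emptyset$ no extension is needed.

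The main obstacle is verifying that the extension $\bu|_{I_0} = e^{\bar{\theta}} \bar{\bu}^\top C$ is componentwise finite: since $I_+$ may be countably infinite, finiteness of $\bar{\bu}$ alone does not imply convergence of $\sum_{i \in I_+} \bar{\bu}_i C_{ij}$. I plan to handle this by writing $\bar{\bu}^\top C = (\bar{\bu}^\top A_1)\, N|_{*\times I_0}$ and comparing it with $\bar{\bu}^\top B = (\bar{\bu}^\top A_1)\, N|_{*\times I_+} = e^{-\bar{\theta}} \bar{\bu}^\top$, which is finite; the idea is to transfer finiteness from one column block of $N$ to the other by combining the structural identities of Lemma~\ref{le:RandGmatrix_equations}, the Winer-Hopf factorization of Lemma~\ref{le:As_WHfac}, and the irreducibility of $Q$ supplied by (a5). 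I expect this column-transfer step to be the only truly delicate part of the argument; the rest follows routinely from the block-triangular structure of $R$ and the Vere-Jones theorem.
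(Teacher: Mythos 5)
Your overall strategy coincides with the paper's: split $S_0$ according to the nonzero rows of $A_1$, exploit $R = A_1 N$ with $N$ strictly positive to get the block form $R = \begin{pmatrix} B & C \\ O & O \end{pmatrix}$ with $B$ and $C$ strictly positive, identify $\cp(R)=\cp(B)=e^{\bar\theta}$ via Lemma~\ref{le:RandG_cp}, and then invoke the Vere-Jones recurrence/transience dichotomy for the irreducible matrix $B$. However, there are two concrete gaps.

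First, your treatment of the transient case is wrong. From $R^n = \begin{pmatrix} B^n & B^{n-1}C \\ O & O \end{pmatrix}$, the series $\sum_{n\ge 0} e^{n\bar\theta} R^n$ has upper-right block $e^{\bar\theta}\bigl(\sum_{n\ge 0} e^{n\bar\theta} B^n\bigr) C$, and when $S_0$ is countably infinite, finiteness of $\sum_n e^{n\bar\theta} B^n$ and of $C$ entrywise does \emph{not} imply that this countably-infinite matrix product is finite. So statement (ii) does not ``immediately'' follow; it requires exactly the same kind of column-transfer argument you flag as delicate in the recurrent case. The paper explicitly reduces the transient case to proving $\sum_{n\ge 1} e^{n\bar\theta} R_{11}^{n-1} R_{12} < \infty$ and establishes this by contradiction.

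Second, the column-transfer step itself, which you defer, is the heart of the proof and is not a routine consequence of the Wiener--Hopf factorization. What makes it work in the paper is the following structural observation: because $A_{1,21}=O$ and $A_{1,22}=O$ (rows of $A_1$ indexed by $S_2$ vanish) and $A_*$ is irreducible, for every $k\in S_2$ and $l\in S_1$ there exist $n_0\ge 1$ and $\bi_{(n_0)}\in\{-1,0\}^{n_0}$ with $[A_{\bi_{(n_0-1)},22}\,A_{i_{n_0},21}]_{k,l}>0$ — i.e.\ one can return from any index in $S_2$ to $S_1$ using only $A_{-1}$ and $A_0$. Combined with the inequality $R \ge R^2 A_{-1} + R A_0$ (iterated to get the paper's \eqref{eq:R11_ineq2}), this bounds the suspect entries of $\bar\bu^\top C$ (resp.\ $\sum_n e^{n\bar\theta} B^{n-1}C$) below by known finite quantities times a strictly positive factor, forcing finiteness by contradiction. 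Your sketch via $\bar\bu^\top C = (\bar\bu^\top A_1) N|_{*\times I_0}$ and a ``transfer'' through $N$ does not identify this mechanism, and without it both branches of the dichotomy remain incomplete.
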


Since the proof of this proposition is elementary and lengthy,  we put it in Appendix \ref{sec:proof_R_u}. By applying the same technique as that used in the proof of Theorem 4.1 of \cite{Kobayashi10}, we also obtain the following result.
\begin{corollary} \label{co:cpRlimit}
Assume (a2) and (a5). For $i,j\in\mathbb{Z}_+$, if every element in the $i$-th row of $A_1$ is zero, we have $[R^n]_{i,j}=0$ for all $n\ge 1$; otherwise, we have $[R^n]_{i,j}>0$ for all $n\ge 1$ and 
\begin{equation}
\lim_{n\to\infty} ([R^n]_{i,j})^{\frac{1}{n}}=e^{-\bar{\theta}}. 
\label{eq:cpRlimit}
\end{equation}

\end{corollary}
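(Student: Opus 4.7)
The plan is to split on whether the $i$-th row of $A_1$ vanishes and then, in the nontrivial case, to establish matching upper and lower bounds on $[R^n]_{i,j}^{1/n}$. Since $R=A_1 N$ by Lemma \ref{le:RandGmatrix_equations}, the case when the $i$-th row of $A_1$ is zero is immediate: the $i$-th row of $R$ is then zero, and the identity $[R^n]_{i,j}=\sum_k R_{i,k}[R^{n-1}]_{k,j}$ gives $[R^n]_{i,j}=0$ for every $n\ge 1$ and every $j$.

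For the nontrivial case, observe that under (a2) and (a5) the matrix $N$ is finite and strictly positive (as recorded in the paragraph preceding Proposition \ref{pr:R_u}), so whenever the $i$-th row of $A_1$ is nonzero the whole $i$-th row of $R=A_1 N$ is strictly positive. Positivity of $[R^n]_{i,j}$ for every $n\ge 1$ then follows by a one-line induction using $[R^{n+1}]_{i,j}\ge R_{i,i}[R^n]_{i,j}$.

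For the upper bound $\limsup_n[R^n]_{i,j}^{1/n}\le e^{-\bar\theta}$, I would invoke Proposition \ref{pr:R_u}: in case (i), iterating $e^{\bar\theta}\bu^\top R=\bu^\top$ yields $e^{\bar\theta n}u_i[R^n]_{i,j}\le u_j$; in case (ii), entrywise convergence of $\sum_n e^{\bar\theta n}R^n$ forces its general term to $0$. For the lower bound, let $\tilde R$ denote the principal submatrix of $R$ indexed by $S^*:=\{k:\text{the $k$-th row of }A_1\text{ is nonzero}\}$. A path expansion shows that $[R^n]_{i,j}=[\tilde R^n]_{i,j}$ for $i,j\in S^*$ and, for $j\notin S^*$, that $[R^n]_{i,j}\ge[\tilde R^{n-1}]_{i,i}R_{i,j}$ with $R_{i,j}>0$. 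Since $\tilde R$ is strictly positive and therefore irreducible, standard convergence-parameter theory for countable nonnegative irreducible matrices (Vere-Jones) gives $\lim_n[\tilde R^n]_{p,q}^{1/n}=\cp(\tilde R)^{-1}$ for all $p,q\in S^*$, so it remains only to check that $\cp(\tilde R)=e^{\bar\theta}$.

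I expect this last identity to be the main obstacle. The inequality $\cp(\tilde R)\ge e^{\bar\theta}$ is already contained in the upper bound proved above; for the reverse, my plan is to adapt the technique used in the proof of Theorem 4.1 of \cite{Kobayashi10} and argue by contradiction. Assuming $\cp(\tilde R)>e^{\bar\theta}$, the decomposition $[R^n]_{i,j}=\sum_{k\in S^*}[\tilde R^{n-1}]_{i,k}R_{k,j}$ for $j\notin S^*$, combined with the weighted bound $R_{k,j}\le e^{-\bar\theta}u_j/u_k$ coming from Proposition \ref{pr:R_u}(i), should force $\sum_n r^n R^n$ to converge entrywise for some $r>e^{\bar\theta}$, contradicting $\cp(R)=e^{\bar\theta}$ from Lemma \ref{le:RandG_cp}. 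Carefully justifying the interchange of summations and the tail control in this step, in the countably infinite phase space (rather than the finite-phase setting of \cite{Kobayashi10}), is where the serious work lies.
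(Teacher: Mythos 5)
Your overall strategy matches the paper's almost exactly: split according to $S^*:=\{k:\text{row $k$ of }A_1\neq 0\}$ (the paper calls this $S_1$), use the block triangular structure $R=\bigl(\begin{smallmatrix}R_{11}&R_{12}\\O&O\end{smallmatrix}\bigr)$ so that $[R^n]_{i,j}=[R_{11}^{n}]_{i,j}$ on $S_1\times S_1$ and $[R^n]_{i,j}=[R_{11}^{n-1}R_{12}]_{i,j}$ for $j\in S_2$, get the upper bound from the Cauchy--Hadamard theorem, and get the lower bound from the supermultiplicative diagonal $[R_{11}^n]_{i,i}$ together with strict positivity of $R_{11}$ and $R_{12}$. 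The zero-row case, the positivity induction, and the reduction to $\cp(\tilde R)=e^{\bar\theta}$ are all correct and coincide with what the paper does.

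The one place you stall is exactly the one place where you acknowledge uncertainty: establishing $\cp(\tilde R)=\cp(R_{11})=e^{\bar\theta}$. You are right that this is not a free consequence of the statement of Proposition \ref{pr:R_u} together with $\cp(R)=e^{\bar\theta}$: the submatrix relation only gives $\cp(R_{11})\ge\cp(R)$, and in the countable-phase setting one cannot simply assert that the extra $S_1\times S_2$ columns of $R^n$ cannot worsen the convergence radius. However, you are re-deriving a fact the paper has already proved. In the proof of Proposition \ref{pr:R_u}, the author introduces $\alpha:=\cp(R_{11})$, treats the $\alpha$-recurrent and $\alpha$-transient cases separately, and in both cases concludes $\alpha=\cp(R)=e^{\bar\theta}$ using the key inequalities \eqref{eq:R11_ineq}--\eqref{eq:R11_ineq2} and irreducibility of $A_*$. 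The paper's proof of the corollary then simply invokes this: ``From the proof of Proposition \ref{pr:R_u}, we know that $\cp(R)=\cp(R_{11})=e^{\bar\theta}$.'' So you should cite that proof rather than attempt to rebuild the argument by adapting the Kobayashi--Miyazawa--Zhao technique from scratch; the sketch you give (contradiction via a weighted bound $R_{k,j}\le e^{-\bar\theta}u_j/u_k$) only works in case (i) of Proposition \ref{pr:R_u} where a left superharmonic $\bu$ exists, and you have not handled case (ii) at all, nor justified the summation interchanges you flag as ``the serious work.'' Replacing that incomplete detour by a reference to the already-established fact closes the gap and makes your proof coincide with the paper's.
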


To make this paper self-contained, we give a proof of the corollary in Appendix  \ref{sec:proof_R_u}.
By Theorem 2 of \cite{Pruitt64}, if the number of nonzero elements of each row of $A_*$ is finite, there exists a positive vector $\bu$ satisfying $\bu^\top A_*(\bar{\theta})=\bu^\top$. Also, if the number of nonzero elements of each column of $A_*$ is finite, there exists a positive vector $\bv$ satisfying $A_*(\underline{\theta})\bv=\bv$. To use this property, we give the following condition.
\begin{condition}
\begin{itemize}
\item[(a6)] The number of positive elements of each row and column of $A_*$ is finite.
\end{itemize}
\end{condition}
It is obvious that (a6) implies (a2). Under (a6), we can refine Proposition \ref{pr:Gmatrix_existence}, as follows.

\begin{proposition} \label{pr:RandG_existence2}
Assume (a5) and (a6). Then, $\gamma^\dagger\le 1$ if and only if $R$ and $G$ are finite.
\end{proposition}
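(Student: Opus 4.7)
My plan is to handle the two directions separately. The \emph{forward} direction is immediate: since (a5) implies (a1), (a3), (a4), and (a6) implies (a2), all hypotheses of Proposition~\ref{pr:Gmatrix_existence}(i) hold, so $\gamma^\dagger\le 1$ forces both $R$ and $G$ to be finite.

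For the \emph{reverse} direction, I would assume $R$ and $G$ finite and aim for $\gamma^\dagger\le 1$. Under (a5) the matrix $Q$ is irreducible, so (by the remark in the text that finiteness of $R$ implies that of $N$, together with irreducibility) $N$ is finite and strictly positive; moreover (a5) forces $A_1,A_{-1}\neq O$, giving $R\ge A_1\neq O$ and $G\ge A_{-1}\neq O$, so $\cp(R),\cp(G)\in(0,\infty)$. The plan is to invoke Proposition~\ref{pr:R_u} applied to the finite $R$. In its alternative (i) a positive $\bu$ with $e^{\bar{\theta}}\bu^\top R=\bu^\top$ exists, and Proposition~\ref{pr:Gmatrix_existence}(ii) immediately yields $\gamma^\dagger\le 1$, so that case is already finished.

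The real work lies in alternative (ii), where $\tilde R(\bar{\theta}):=\sum_{n\ge 0}e^{\bar{\theta} n}R^n$ is finite; any nonzero column $\bv$ of $\tilde R(\bar{\theta})$ then satisfies $e^{\bar{\theta}}R\bv\le\bv$. I would then apply the symmetric analogue of Proposition~\ref{pr:R_u} to $G$: if its eigenvector alternative holds, Proposition~\ref{pr:Gmatrix_existence}(ii') finishes the case; otherwise $\cp(G)>e^{-\bar{\theta}}$, so $(I-e^{-\bar{\theta}}G)^{-1}$ is a finite nonnegative matrix and $\by:=(I-e^{-\bar{\theta}}G)^{-1}N\bv$ is nonzero because $N>\bzero$. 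The Wiener--Hopf factorization (Lemma~\ref{le:As_WHfac}) then yields
\[
(I-A_*(\bar{\theta}))\by=(I-e^{\bar{\theta}}R)\bv\ge\bzero,
\]
i.e.\ $A_*(\bar{\theta})\by\le\by$. Irreducibility of $A_*(\bar{\theta})$ (which follows from (a5)) then forces $\by>\bzero$ and $\cp(A_*(\bar{\theta}))\ge 1$, so $\chi(\bar{\theta})\le 1$ and hence $\gamma^\dagger\le 1$.

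The principal obstacle is alternative (ii) of Proposition~\ref{pr:R_u}: one must convert mere summability of $e^{\bar{\theta} n}R^n$ into the weak eigenvector bound $A_*(\bar{\theta})\by\le\by$ without presupposing $\gamma^\dagger\le 1$ at the outset. The enabling ingredients are the Wiener--Hopf identity (Lemma~\ref{le:As_WHfac}), the strict positivity of $N$ (which keeps $\by$ nonzero), and assumption (a6), which both supplies (a2) and---through Theorem~2 of~\cite{Pruitt64}---furnishes the positive eigenvectors of $A_*(\bar{\theta})$ needed to sharpen the weak inequality into the desired spectral conclusion.
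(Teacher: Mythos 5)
Your forward direction and the first alternative of the reverse direction match the paper's argument, but there is a genuine gap in the second alternative. You need $(I-e^{-\bar\theta}G)^{-1}$ to be finite, and you justify this by claiming that the failure of the $G$-analogue of alternative~(i) in Proposition~\ref{pr:R_u} forces $\cp(G)>e^{-\bar\theta}$. But the dichotomy in Proposition~\ref{pr:R_u}, and in its mirror for $G$, is between the existence of a positive $\cp$-invariant vector and $\cp$-\emph{transience} (i.e.\ $\sum_n\cp(G)^nG^n<\infty$); neither branch controls how $\cp(G)$ compares with $\cp(R)^{-1}=e^{-\bar\theta}$. To get $(I-e^{-\bar\theta}G)^{-1}<\infty$ you would need $\cp(R)\cp(G)\ge1$, i.e.\ $\underline\theta\le\bar\theta$, which is not available here; in the proof of Lemma~\ref{le:RandG_cp}, which your argument imitates, that inequality is usable precisely because $\gamma^\dagger\le1$ is part of that lemma's hypotheses, whereas here it is what you are trying to prove.

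The paper's case (ii) proceeds differently and sidesteps this obstacle. It rearranges the Wiener--Hopf factorization so that the inverted factor lives on the $R$-side,
\[
N\,(I-e^{\underline\theta}R)^{-1}\,(I-A_*(\underline\theta))=(I-e^{-\underline\theta}G),
\]
where $\sum_n e^{n\underline\theta}R^n$ converges a fortiori from $\sum_n e^{n\bar\theta}R^n<\infty$. It then uses (a6) together with Theorem~2 of~\cite{Pruitt64} to produce a \emph{positive right eigenvector} $\bv$ of $A_*(\underline\theta)$ --- this is where (a6) does its real work, not merely in supplying (a2). Inserting $\bv$ into the identity annihilates the middle factor and gives $e^{-\underline\theta}G\bv=\bv$, after which Proposition~\ref{pr:Gmatrix_existence}(ii') yields $\gamma^\dagger\le1$. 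The paper thus transfers a positive eigenvector \emph{from} $A_*$ \emph{to} $G$, rather than constructing a subinvariant vector for $A_*$ out of columns of $\tilde R(\bar\theta)$ as you attempt; your strategy is the right one inside Lemma~\ref{le:RandG_cp}, but it does not transplant to this proposition without the missing inequality.
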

\begin{proof}
By Proposition \ref{pr:Gmatrix_existence}, if $\gamma^\dagger\le 1$, then both $R$ and $G$ are finite. We, therefore, prove the converse.
Assume that $R$ and $G$ are finite. Then, $N$ is also finite and, by Lemma \ref{le:RandG_cp}, we have $\cp(R)=e^{\bar{\theta}}$. 
First, consider case (i) of Proposition \ref{pr:R_u} and assume that there exists a positive vector $\bu$ such that $e^{\bar{\theta}} \bu R = \bu$. Then, by statement (ii) of Proposition \ref{pr:Gmatrix_existence}, we have $\gamma^\dagger \le 1$.  
Next, consider case (ii) of Proposition \ref{pr:R_u} and assume $\sum_{n=0}^\infty e^{n \bar{\theta}} R^n <\infty$. Then, we have $(I-e^{\underline{\theta}} R)^{-1}=\sum_{n=0}^\infty e^{n \underline{\theta}} R^n<\infty$ since $\underline{\theta}\le \bar{\theta}$. Hence, we obtain, from (\ref{eq:NandH_relation2}) and (\ref{eq:As_WHfact_RG}) ,  
\begin{align}
 N (I-e^{\underline{\theta}} R)^{-1} (I-A_*(\underline{\theta})) = (I-e^{-\underline{\theta}} G). \label{eq:AsGandR_relation2}
\end{align}
Under the assumption of the proposition, there exists a positive vector $\bv$ satisfying $A_*(\underline{\theta})\bv=\bv$ since $\cp(A_*(\underline{\theta}))=1$. Hence, from (\ref{eq:AsGandR_relation2}), we obtain, for this $\bv$, $e^{-\underline{\theta}} G\bv =\bv$, and by statement (ii') of Proposition \ref{pr:Gmatrix_existence}, we have $\gamma^\dagger \le 1$. 
This completes the proof. 
\end{proof}

%
Recall that $\gamma^\dagger$ is defined as $\gamma^\dagger=\inf_{\theta\in\mathbb{R}}\cp(A_*(\theta))^{-1}$. Since if $Q$ is irreducible, all the elements of $\tilde{Q}=\sum_{n=0}^\infty Q^n$ simultaneously converge or diverge, we obtain, from Proposition \ref{pr:RandG_existence2}, the following result. 
\begin{proposition} \label{pr:Q_cp}
Assume (a5) and (a6). Then, $\gamma^\dagger\le 1$ if and only if $\tilde{Q}$ is finite. 
\end{proposition}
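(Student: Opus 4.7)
The plan is to combine Proposition \ref{pr:RandG_existence2} with the relationship between $\tilde{Q}$, its $(0,0)$-block $N$, and the matrices $R$ and $G$ provided by Lemma \ref{le:RandGmatrix_equations}. Since Proposition \ref{pr:RandG_existence2} already characterizes the condition $\gamma^\dagger\le 1$ in terms of finiteness of $R$ and $G$, the problem reduces to showing that, under (a5) and (a6), $\tilde{Q}<\infty$ if and only if both $R$ and $G$ are finite.

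For the forward direction, I would assume $\gamma^\dagger\le 1$. By Proposition \ref{pr:RandG_existence2}, $R$ and $G$ are both finite, and as remarked in the paragraph immediately preceding the statement, finiteness of $R$ (or of $G$) implies finiteness of $N$. Since $N$ is the $(0,0)$-block of $\tilde{Q}$ and $Q$ is irreducible by (a5), the standard dichotomy for irreducible nonnegative matrices (all entries of the fundamental matrix $\tilde{Q}=\sum_{n\ge 0} Q^n$ converge or diverge simultaneously, a fact stated in the same paragraph) forces every entry of $\tilde{Q}$ to be finite.

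For the converse, I would assume $\tilde{Q}$ is finite; then its $(0,0)$-block $N$ is finite as well. The identities $R=A_1 N$ and $G=N A_{-1}$ from Lemma \ref{le:RandGmatrix_equations}, together with condition (a6) (which guarantees that each row of $A_1$ and each column of $A_{-1}$ has only finitely many positive entries, so that each position of the products $A_1 N$ and $N A_{-1}$ is a finite sum of finite terms), yield that both $R$ and $G$ are finite. A second application of Proposition \ref{pr:RandG_existence2} then gives $\gamma^\dagger\le 1$.

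The proof is essentially a bookkeeping exercise once the right earlier results are in hand; no new structural argument is required. The only point where one must be slightly careful is the invocation of the irreducibility dichotomy for $\tilde{Q}$ in the forward direction: strictly speaking, one uses that for each pair of indices $(\by,\by')$ there exist $k,\ell\ge 0$ with $[Q^k]_{\by,(\mathbf{0},j)}>0$ and $[Q^\ell]_{(\mathbf{0},j'),\by'}>0$ for some $j,j'$, so that $[\tilde{Q}]_{\by,\by'}$ is sandwiched between constant multiples of entries of $N$. This is the only step I expect to write out explicitly; everything else is a direct citation of Proposition \ref{pr:RandG_existence2} and Lemma \ref{le:RandGmatrix_equations}.
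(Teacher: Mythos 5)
Your proposal is correct and follows essentially the same route as the paper's own proof: both directions invoke Proposition \ref{pr:RandG_existence2}, the forward direction uses the irreducibility dichotomy for $\tilde{Q}$ (with the observation, already recorded in the text, that finiteness of $R$ or $G$ forces finiteness of the $(0,0)$-block $N$), and the converse extracts finiteness of $R=A_1N$ and $G=NA_{-1}$ from finiteness of $N$ via condition (a6). The only cosmetic difference is that you spell out the sandwiching argument behind the dichotomy, which the paper leaves implicit.
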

\begin{proof}
Under the assumption of the proposition, if $\gamma^\dagger \le 1$, then, by Proposition \ref{pr:RandG_existence2}, $R$ and $G$ are finite. Since $Q$ is irreducible, this implies that $N$ is finite and $\tilde{Q}$ is also finite. 
On the other hand, if $\tilde{Q}$ is finite, then $N$ is finite and $R$ and $G$ are also finite since the number of positive elements of each row of $A_1$ and that of each column of $A_{-1}$ are finite. Hence, by Proposition \ref{pr:RandG_existence2}, $\gamma^\dagger\le 1$ and this completes the proof. 
\end{proof}

By this proposition, we obtain the main result of this section, as follows. 
\begin{lemma} \label{le:Q_cp}
Under (a5) and (a6), we have 
\begin{equation}
\cp(Q) = (\gamma^\dagger)^{-1} = \sup_{\theta\in\mathbb{R}} \cp(A_*(\theta))
\end{equation}
and $Q$ is $(\gamma^\dagger)^{-1}$-transient. 
\end{lemma}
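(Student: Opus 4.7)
The plan is a direct scaling argument that reduces the claim to Proposition \ref{pr:Q_cp}. For each $r>0$, consider the rescaled block tridiagonal matrix $rQ$, whose block triplet is $\{rA_{-1},rA_0,rA_1\}$. Since multiplication by a positive scalar preserves the zero/nonzero pattern, $rQ$ inherits irreducibility from $Q$, so (a5) transfers; and the number of positive entries in each row and column of $rA_*$ is identical to that of $A_*$, so (a6) transfers. Hence Proposition \ref{pr:Q_cp} is applicable to $rQ$ for every $r>0$.

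Next, I would identify how $\gamma^\dagger$ transforms under scaling. The Feynman--Kac operator associated with $\{rA_{-1},rA_0,rA_1\}$ is $rA_*(\theta)$, and since $\cp(rM)=\cp(M)/r$ for any nonnegative matrix $M$ and $r>0$, the corresponding reciprocal satisfies $\cp(rA_*(\theta))^{-1}=r\chi(\theta)$. Taking the infimum over $\theta\in\mathbb{R}$ gives $\gamma^\dagger_r=r\gamma^\dagger$, where $\gamma^\dagger_r$ denotes the quantity $\gamma^\dagger$ computed for the triplet $\{rA_{-1},rA_0,rA_1\}$.

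Applying Proposition \ref{pr:Q_cp} to $rQ$ then yields that the fundamental matrix $\widetilde{rQ}=\sum_{n=0}^\infty r^nQ^n$ is finite if and only if $\gamma^\dagger_r\le 1$, equivalently $r\le(\gamma^\dagger)^{-1}$. By the definition of the convergence parameter, $\cp(Q)=\sup\{r\ge 0:\sum_n r^nQ^n<\infty\}=(\gamma^\dagger)^{-1}$, and the second equality in the lemma is immediate from $\gamma^\dagger=\inf_\theta \cp(A_*(\theta))^{-1}=(\sup_\theta \cp(A_*(\theta)))^{-1}$. Finally, since $\sum_n r^nQ^n$ is still finite at the boundary value $r=(\gamma^\dagger)^{-1}$ (the inequality $r\gamma^\dagger\le 1$ holds with equality), $Q$ is $(\gamma^\dagger)^{-1}$-transient.

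\textbf{Main obstacle:} The deep content is already packaged in Proposition \ref{pr:Q_cp}, so no substantial technical hurdle remains. The only points requiring care are (i) confirming that conditions (a5) and (a6) transfer verbatim under positive scaling of $Q$, and (ii) observing that the equality case $r\gamma^\dagger=1$ lies on the finite side of the dichotomy in Proposition \ref{pr:Q_cp}, so that the supremum defining $\cp(Q)$ is attained and the transience claim follows.
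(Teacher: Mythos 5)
Your proposal is correct and follows essentially the same approach as the paper: both reduce Lemma \ref{le:Q_cp} to Proposition \ref{pr:Q_cp} by scaling $Q\mapsto rQ$, observing that (a5) and (a6) are preserved, computing $\gamma^\dagger_r = r\gamma^\dagger$, and then reading off the convergence parameter and the transience at the boundary. The only difference is stylistic: you invoke the ``if and only if'' in Proposition \ref{pr:Q_cp} once and appeal directly to the definition $\cp(Q)=\sup\{r\ge 0:\sum_n r^nQ^n<\infty\}$, whereas the paper proves the two inequalities $\cp(Q)\ge(\gamma^\dagger)^{-1}$ and $\cp(Q)\le(\gamma^\dagger)^{-1}$ separately, the latter by contradiction using the fact that $\gamma^\dagger>0$ (which follows from irreducibility of $A_*$); you leave $\gamma^\dagger>0$ implicit, but it is needed to make $(\gamma^\dagger)^{-1}$ a finite boundary value for the transience claim, so it is worth stating.
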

\begin{proof}
For $\beta>0$, $\beta Q$ is a nonnegative block tridiagonal matrix, whose block matrices are given by $\beta A_{-1}$, $\beta A_0$ and $\beta A_1$. Hence, the assumption of this lemma also holds for $\beta Q$. 
Define $\gamma(\beta)$ as 
\begin{equation}
\gamma(\beta) 
= \inf_{\theta\in\mathbb{R}} \cp(\beta A_*(\theta))^{-1} 
= \beta \inf_{\theta\in\mathbb{R}} \cp(A_*(\theta))^{-1}
= \beta \gamma^\dagger.
\end{equation}
By Proposition \ref{pr:Q_cp}, if $\gamma(\beta) = \beta \gamma^\dagger\le 1$, then the fundamental matrix of $\beta Q$, $\widetilde{\beta Q}$, is finite and $\cp(\beta Q)=\beta^{-1} \cp(Q)\ge 1$. Hence, if $\beta\le (\gamma^\dagger)^{-1}$, then $\cp(Q)\ge \beta$. Setting $\beta$ at $(\gamma^\dagger)^{-1}$, we obtain $\cp(Q)\ge (\gamma^\dagger)^{-1}$. 
Next we prove $\cp(Q)\le (\gamma^\dagger)^{-1}$. Suppose $\cp(Q)> (\gamma^\dagger)^{-1}$, then there exists an $\varepsilon>0$ such that the fundamental matrix of $((\gamma^\dagger)^{-1}+\varepsilon) Q$ is finite. By Proposition \ref{pr:Q_cp}, this implies 
\begin{equation}
\gamma((\gamma^\dagger)^{-1}+\varepsilon)) = ((\gamma^\dagger)^{-1}+\varepsilon) \gamma^\dagger = 1+\varepsilon \gamma^\dagger \le 1, 
\end{equation}
and we obtain $\gamma^\dagger\le 0$. This contradicts $\gamma^\dagger> 0$, which is obtained from the irreducibility of $A_*$. Hence, we obtain $\cp(Q)\le (\gamma^\dagger)^{-1}$.
Setting $\beta$ at $(\gamma^\dagger)^{-1}$, we have $\gamma(\beta)=\gamma((\gamma^\dagger)^{-1})\le 1$ and, by Proposition \ref{pr:Q_cp}, the fundamental matrix of $\beta Q=(\gamma^\dagger)^{-1}Q$ is finite. This means $Q$ is $(\gamma^\dagger)^{-1}$-transient.
\end{proof}

\begin{remark}
In the case where the phase space is finite, Lemma \ref{le:Q_cp} corresponds to Lemma 2.3 of \cite{Miyazawa15}. Assuming condition (a6), we extended that lemma to the case of infinite phase space. 
\end{remark}

%
\begin{remark} \label{re:multidiagonal_cp}
For nonnegative block multidiagonal matrices, a property similar to Lemma \ref{le:Q_cp} holds. We demonstrate it in the case of block quintuple-diagonal matrix.
Let $Q$ be a nonnegative block matrix defined as 
\[
Q 
= \begin{pmatrix}
A_0 & A_1 & A_2 & & & & \cr
A_{-1} & A_0 & A_1 & A_2 & & & \cr
A_{-2} & A_{-1} & A_0 & A_1 & A_2& & \cr
& A_{-2} & A_{-1} & A_0 & A_1 & A_2& \cr
& & \ddots & \ddots & \ddots & \ddots & \ddots 
\end{pmatrix}, 
\]
where $A_i,\,i\in\{-2,-1,0,1,2\}$, are nonnegative square matrices with a countable dimension.
For $\theta\in\mathbb{R}$, define a matrix function $A_*(\theta)$ as 
\begin{equation}
A_*(\theta) = \sum_{i=-2}^2 e^{i \theta} A_i.
\end{equation}
Then, assuming that $Q$ is irreducible and the number of positive elements of each row and column of $A_*(0)$ is finite, we can obtain 
\begin{equation}
\cp(Q) = \sup_{\theta\in\mathbb{R}} \cp(A_*(\theta)).
\label{eq:cpQ_5diagonal}
\end{equation}
Here we prove this equation. Define blocks $\hat{A}_i,\,i\in\{-1,0,1\}$, as
\[
\hat{A}_{-1} = 
\begin{pmatrix} A_{-2} & A_{-1} \cr O & A_{-2} \end{pmatrix}, \quad
\hat{A}_0 = 
\begin{pmatrix} A_0 & A_1 \cr A_{-1} & A_0 \end{pmatrix}, \quad
\hat{A}_1 = 
\begin{pmatrix} A_2 & O \cr A_1 & A_2 \end{pmatrix},  
\]
then $Q$ is represented in block tridiagonal form by using these blocks. For $\theta\in\mathbb{R}$, define a matrix function $\hat{A}_*(\theta)$ as 
\[
\hat{A}_*(\theta) 
= e^{-\theta} \hat{A}_{-1} + \hat{A}_0 + e^{\theta} \hat{A}_1
= \begin{pmatrix} 
e^{-\theta} A_{-2} + A_0 + e^{\theta} A_2 & e^{-\theta/2} (e^{-\theta/2} A_{-1} + e^{\theta/2} A_1) \cr 
e^{\theta/2} (e^{-\theta/2} A_{-1} + e^{\theta/2} A_1) & e^{-\theta} A_{-2} + A_0 + e^{\theta} A_2
\end{pmatrix}, 
\]
then, by Lemma \ref{le:Q_cp}, we have $\cp(Q) = \sup_{\theta\in\mathbb{R}} \cp(\hat{A}_*(\theta))$. To prove equation (\ref{eq:cpQ_5diagonal}), it, therefore, suffices to show that, for any $\theta\in\mathbb{R}$, 
\begin{align}
\cp(A_*(\theta/2)) 
&= \sup\{ \alpha\in\mathbb{R}_+; \alpha \bx^\top A_*(\theta/2)\le \bx^\top\ \mbox{for some}\ \bx>\bzero \} \cr
&= \sup\{ \alpha\in\mathbb{R}_+; \alpha \hat{\bx}^\top \hat{A}_*(\theta)\le \hat{\bx}^\top\ \mbox{for some}\ \hat{\bx}>\bzero \} 
= \cp(\hat{A}_*(\theta)). 
\label{eq:AshatAs}
\end{align}
For $\theta\in\mathbb{R}$ and $\alpha\in\mathbb{R}_+$, if $\alpha \bx^\top A_*(\theta/2)\le \bx^\top$ for some $\bx>\bzero$, then, letting $\hat{\bx}^\top=(\bx^\top, e^{-\theta/2} \bx^\top)$, we have $\alpha \hat{\bx}^\top \hat{A}_*(\theta) \le \hat{\bx}^\top$. 
On the other hand, if $\alpha \hat{\bx}^\top \hat{A}_*(\theta) \le \hat{\bx}^\top$ for some $\hat{\bx}^\top=(\hat{\bx}_1^\top,\hat{\bx}_2^\top)>\bzero^\top$, then letting $\bx=\hat{\bx}_1+e^{\theta/2} \hat{\bx}_2$, we have $\alpha \bx^\top A_*(\theta/2)\le \bx^\top$. 
As a result, we obtain equations (\ref{eq:AshatAs}). 

\end{remark}

%
%
\section{Markov modulated random walks: preliminaries} \label{sec:model}

We give some assumptions and propositions for the $d$-dimensional MMRW $\{\bY_n\}=\{(\bX_n,J_n)\}$ defined in Sect.\ \ref{sec:intro}. First, we assume the following condition. 
\begin{assumption} \label{as:P_irreducible}
The $d$-dimensional MMRW $\{\bY_n\}$ is irreducible. 
\end{assumption}

Under this assumption, for any $\btheta\in\mathbb{R}^d$, $A_*(\btheta)$ is also irreducible. 
Denote $A_*(\bzero)$ by $A_*$, which is the transition probability matrix of the background process $\{J_n\}$. In order to use the results in the previous section, we assume the following condition.
\begin{assumption} \label{as:Ass_boundary}
The number of positive elements in every low and column of $A_*$ is finite. 
\end{assumption}

Define the mean increment vector $\ba=(a_1,a_2,...,a_d)$ as 
\[
a_i = \lim_{n\to\infty} \frac{1}{n} \sum_{k=1}^n (X_{i,k}-X_{i,k-1}), \quad i=1,2,...,d.
\]
We assume these limits exist with probability one. 
With respect to the occupation measures defined in Sect.\ \ref{sec:intro}, the following property holds.
\begin{proposition} \label{pr:finiteness_tildeQ}
If there exists some $i\in\{1,2,...,d\}$ such that $a_i<0$, then, for any $\by\in\mathbb{S}_+$, the occupation measure $(\tilde{q}_{\by,\by'}; \by'\in\mathbb{S}_+)$ is finite, i.e., 
\begin{equation}
\sum_{\by'\in\mathbb{S}_+} \tilde{q}_{\by,\by'} = \mathbb{E}(\tau\,|\,\bY_0=\by) < \infty, 
\end{equation}
where $\tau$ is the stopping time at which $\{\bY_n\}$ enters $\mathbb{S}\setminus\mathbb{S}_+$ for the first time. 
\end{proposition}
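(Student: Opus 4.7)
The plan is to reduce this multidimensional estimate to a one-dimensional first-passage problem and then apply the convergence-parameter machinery of Section~\ref{sec:RandGmatrix}. Since $\{\tau>n\}$ requires every coordinate of $\bX_n$ to remain nonnegative, we have $\tau\le\tau_i:=\inf\{n\ge 0:X_{i,n}<0\}$, so it suffices to show $\mathbb{E}(\tau_i\mid\bY_0=\by)<\infty$. By space-homogeneity of $\{\bY_n\}$, the marginal $\{(X_{i,n},J_n)\}$ is itself a one-dimensional skip-free MMRW on $\mathbb{Z}\times S_0$ governed by the triplet
\[
B_k := \sum_{\bi\in\{-1,0,1\}^d:\, i_i=k} A_{\bi},\quad k\in\{-1,0,1\},
\]
with $B_{-1}+B_0+B_1=A_*(\bzero)$ being irreducible and stochastic (hence $\cp(B_*(0))=1$).

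Let $Q_i$ denote the nonnegative block tridiagonal matrix built from $(B_{-1},B_0,B_1)$ on $\mathbb{Z}_+\times S_0$. Conditions (a5) and (a6) follow from Assumptions~\ref{as:P_irreducible} and~\ref{as:Ass_boundary}, so Lemma~\ref{le:Q_cp} yields $\cp(Q_i)=\sup_{\theta\in\mathbb{R}}\cp(B_*(\theta))$. To convert this into the finite-expectation conclusion I need $\cp(Q_i)>1$ strictly. Since $\cp(B_*(\theta))^{-1}$ is log-convex in $\theta$ (Lemma~\ref{le:chi_convex}) and equals $1$ at $\theta=0$, the task is to rule out $\theta=0$ being a local minimum. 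Combining the Feynman--Kac identity
\[
\sum_{j'}[B_*(\theta)^n]_{j,j'} = \mathbb{E}[e^{\theta X_{i,n}}\mid J_0=j,\,X_{i,0}=0]
\]
with the assumed a.s.\ convergence $X_{i,n}/n\to a_i<0$ and the Azuma--Hoeffding bound applied to the martingale $M_n=X_{i,n}-\sum_{k<n}\mu_i(J_k)$ (whose differences are bounded by $2$ by skip-freeness) produces $\mathbb{E}[e^{\theta X_{i,n}}]\le Ce^{-cn}$ for all sufficiently small $\theta>0$, whence $\cp(B_*(\theta))>1$ and thus $\cp(Q_i)>1$.

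With $\cp(Q_i)>1$ in hand, choose $\theta^*>0$ with $\lambda:=\cp(B_*(\theta^*))^{-1}<1$ and, by \cite{Pruitt64}, a positive vector $\bv$ satisfying $B_*(\theta^*)\bv\le\lambda\bv$. Define $\bu$ on $\mathbb{Z}_+\times S_0$ by $u_{(k,j)}:=e^{\theta^*k}v_j$. A direct block computation shows $Q_i\bu\le\lambda\bu$, and iterating gives $Q_i^n\bu\le\lambda^n\bu$, which, after a suitable normalization of $\bv$, translates into the exponential tail bound
\[
\mathbb{P}(\tau_i>n\mid\bY_0=\by) = \sum_{\by'}[Q_i^n]_{\by,\by'} \le C(\by)\lambda^n.
\]
Summing over $n$ yields $\mathbb{E}(\tau\mid\bY_0=\by)\le\mathbb{E}(\tau_i\mid\bY_0=\by)<\infty$.

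I expect the principal obstacle to be the strict bound $\cp(B_*(\theta))>1$ for some $\theta>0$: in the finite-phase case this follows immediately from Perron--Frobenius perturbation (identifying $\frac{d}{d\theta}\cp(B_*(\theta))^{-1}|_{\theta=0}$ with $a_i$), but here $\{J_n\}$ is on a countable set and is not assumed positive recurrent, so the derivative identity must be established through the concentration argument above. A secondary technical point is converting the weighted bound $Q_i^n\bu\le\lambda^n\bu$ into an unweighted row sum bound for $\mathbb{P}(\tau_i>n)$, which may require restricting to a recurrent cyclic class of $B_*(0)$ or truncating the phase space to ensure $\bv$ is uniformly bounded away from zero on the states actually visited before $\tau_i$.
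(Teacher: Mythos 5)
Your reduction to a one‑dimensional first‑passage problem is the same opening move the paper makes: the paper defines $\check{\tau}=\inf\{n\ge 0:X_{1,n}<0\}$, notes $\tau\le\check{\tau}$, and the triplet $(\check{A}_{-1},\check{A}_0,\check{A}_1)$ it forms is exactly your $(B_{-1},B_0,B_1)$. After that, however, the paper simply observes that $(\check{X}_n,\check{J}_n)$ is a one‑dimensional QBD with mean level increment $a_1<0$ and invokes the standard QBD stability fact that negative drift gives a finite expected first‑passage time to level $0$. You instead try to prove the strictly stronger statement $\mathbb{P}(\tau_i>n)\le C\lambda^n$ via the convergence‑parameter machinery of Section~\ref{sec:RandGmatrix}, and this route has two genuine gaps.

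First, the concentration step does not go through as written. Azuma--Hoeffding controls only the martingale $M_n=X_{i,n}-\sum_{k<n}\mu_i(J_k)$, and to conclude $\mathbb{E}[e^{\theta X_{i,n}}]\le Ce^{-cn}$ you would also need an exponential moment bound (or a large‑deviation upper bound) on the compensator $\sum_{k<n}\mu_i(J_k)$. The only thing you are given is the almost‑sure convergence $X_{i,n}/n\to a_i$, which together with $M_n/n\to 0$ yields $\frac{1}{n}\sum_{k<n}\mu_i(J_k)\to a_i$ almost surely, but a.s.\ convergence of a time average along a countable‑state Markov chain does not by itself supply the exponential concentration you need. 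In the finite‑phase case this is where a Perron--Frobenius derivative computation rescues you; on a countable phase space with no geometric ergodicity assumption, the claim $\cp(B_*(\theta))>1$ for small $\theta>0$ is not established by the argument sketched, and indeed the whole point of Assumption~\ref{as:finiteness_tildeQ} being stated separately from a spectral‑gap condition is that the paper does not want to prove such a strict inequality. Second, even granting $Q_i\bu\le\lambda\bu$ with $\bu=(e^{\theta^*k}v_j)$, the passage from $\sum_{\by'}[Q_i^n]_{\by,\by'}u_{\by'}\le\lambda^n u_{\by}$ to an unweighted bound on $\mathbb{P}(\tau_i>n)=\sum_{\by'}[Q_i^n]_{\by,\by'}$ requires $v_j$ bounded away from $0$ over the phases actually reachable, which you flag yourself as unresolved and which genuinely can fail on a countable $S_0$.

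The fix is simply not to aim for an exponential tail. Once you are at the one‑dimensional QBD $(\check{X}_n,\check{J}_n)$ with mean increment $a_1<0$, the finiteness of $\mathbb{E}(\check{\tau}^Q)$ is the classical positive‑recurrence/first‑passage result for one‑dimensional skip‑free Markov additive processes (it can be proved, e.g., via a Foster--Lyapunov drift argument with the linear test function $f(k,j)=k$, which requires no spectral‑gap or concentration input). That is precisely the level of generality the paper works at, and it closes the argument without the two gaps above.
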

\begin{proof}
Without loss of generality, we assume $a_1<0$. Let $\check{\tau}$ be the stopping time at which $X_{1,n}$ becomes less than 0 for the first time, i.e., $\check{\tau} = \inf\{n\ge 0; X_{1,n}<0 \}$. Since $\{(x_1,x_2,...,x_d,j)\in\mathbb{S}; x_1<0 \} \subset \mathbb{S}\setminus\mathbb{S}_+$, we have $\tau\le \check{\tau}$, and this implies that, for any $\by\in\mathbb{S}_+$, 
\begin{equation}
\mathbb{E}(\tau\,|\,\bY_0=\by)\le\mathbb{E}(\check{\tau}\,|\,\bY_0=\by). 
\end{equation}
Next, we demonstrate that $\mathbb{E}(\check{\tau}\,|\,\bY_0=\by)$ is finite. For $i\in\{-1,0,1\}$, define a matrix $\check{A}_i$ as 
\[
\check{A}_i = \sum_{(i_2,i_3,...,i_d)\in\{-1,0,1\}^{d-1}} A_{(i,i_2,i_3,...,i_d)},  
\]
and consider a one-dimensional QBD process $\{\check{\bY}_n\}=\{(\check{X}_n,\check{J}_n)\}$ on $\mathbb{Z}_+\times S_0$, having $\check{A}_{-1}$, $\check{A}_0$ and $\check{A}_1$ as transition probability blocks when $\check{X}_n>0$. We assume the transition probability blocks that governs transitions of the QBD process when $\check{X}_n=0$ are given appropriately. 
Define a stopping time $\check{\tau}^Q$ as $\check{\tau}^Q=\inf\{n\ge 0; \check{X}_n=0\}$. Since $a_1$ is the mean increment of the QBD process when $\check{X}_n>0$,  the assumption of $a_1<0$ implies that, for any $(x,j)\in\mathbb{Z}_+\times S_0$, $\mathbb{E}(\check{\tau}^Q\,|\,\check{Y}_0=(x,j)) < \infty$. 
We, therefore, have for any $\by=(x_1,x_2,...,x_d,j)\in\mathbb{S}_+$, 
\begin{equation}
\mathbb{E}(\check{\tau}\,|\,\bY_0=\by) = \mathbb{E}(\check{\tau}^Q\,|\,\check{Y}_0=(x_1+1,j)) < \infty,
\end{equation}
and this completes the proof.
\end{proof}

Hereafter, we assume the following condition. 
\begin{assumption} \label{as:finiteness_tildeQ}
For some $i\in\{1,2,...,d\}$, $a_i<0$.
\end{assumption}

\begin{remark}
If $A_*$ is positive recurrent, the mean increment vector $\ba=(a_1,a_2,...,a_d)$ is given as 
\begin{equation}
a_i = \bpi_* \left(\frac{\partial}{\partial \theta_i} A_*(\btheta)\,\Big|_{\btheta=\bzero}\right) \bone,\ i=1,2,...,d,
\end{equation}
where $\bpi_*$ is the stationary distribution of $A_*$ and $\bone$ a column vector of $1$'s whose dimension is determined in context.
\end{remark}

%
We say that a positive function $f(\bx)$ is log-convex in $\bx\in\mathbb{R}^d$ if $\log f(\bx)$ is convex in $\bx$. A log-convex function is also a convex function. 
By Lemma \ref{le:cp_convex} in Appendix \ref{sec:cp_convex}, the following property holds. 
\begin{proposition} \label{pr:chiconvex}
$\cp(A_*(\btheta))^{-1}$ is log-convex and hence convex in $\btheta \in \mathbb{R}^d$. 
\end{proposition}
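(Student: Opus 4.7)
The plan is to mirror the one-dimensional argument already carried out for Lemma~\ref{le:chi_convex}, reducing everything to the appendix result Lemma~\ref{le:cp_convex}. That lemma (as used previously) says: if every entry of a nonnegative matrix-valued function is log-convex in its (possibly vector) parameter, then the reciprocal of the convergence parameter of that matrix is log-convex in the parameter. So the whole task reduces to verifying the hypothesis entry-wise.

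First I would write, for each pair of phases $j,j' \in S_0$, the entry
\[
[A_*(\btheta)]_{j,j'} \;=\; \sum_{\bi \in \{-1,0,1\}^d} e^{\langle \bi,\btheta \rangle}\, [A_{\bi}]_{j,j'}.
\]
Each summand is either identically zero or of the form $c\, e^{\langle \bi,\btheta \rangle}$ with $c>0$, whose logarithm $\log c + \langle \bi,\btheta \rangle$ is affine, hence convex, in $\btheta$. Thus each summand is log-convex in $\btheta$.

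Next I would invoke the standard fact that a finite sum of log-convex functions is log-convex (a one-line consequence of Hölder's inequality: if $f_k(\btheta)^{1-t} f_k(\btheta')^{t} \ge f_k((1-t)\btheta + t\btheta')$ for each $k$, then summing and applying Hölder to the right-hand side yields the same inequality for $\sum_k f_k$). Since there are at most $3^d$ summands in the expression for $[A_*(\btheta)]_{j,j'}$, this applies directly, so every entry of $A_*(\btheta)$ is log-convex in $\btheta \in \mathbb{R}^d$.

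Finally I would apply Lemma~\ref{le:cp_convex} from the appendix to conclude that $\cp(A_*(\btheta))^{-1}$ is log-convex, and observe that log-convexity of a positive function implies its convexity (since convexity is preserved under the monotone convex map $x \mapsto e^x$). I expect no serious obstacle here: the only subtlety is that the phase space $S_0$ is countably infinite, but this is exactly the setting Lemma~\ref{le:cp_convex} is designed to handle, and the log-convexity verification for each entry uses only a finite sum over $\{-1,0,1\}^d$, so no interchange of limits and convex combinations is needed in this step.
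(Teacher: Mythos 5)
Your proposal is correct and takes essentially the same route as the paper: the paper states Proposition~\ref{pr:chiconvex} as an immediate consequence of Lemma~\ref{le:cp_convex} (exactly as it already did for the one-dimensional Lemma~\ref{le:chi_convex}), leaving the entry-wise log-convexity of $A_*(\btheta)$ implicit, and you have simply filled in that step by writing each entry as a finite nonnegative combination of exponentials $e^{\langle\bi,\btheta\rangle}$ and invoking H\"older. The only thing worth adding is a one-line check that the remaining hypotheses of Lemma~\ref{le:cp_convex} hold, namely that $A_*(\btheta)$ is irreducible (this follows from Assumption~\ref{as:P_irreducible}) and that all iterates $A_*(\btheta)^n$ are finite for every $\btheta$ (this follows from Assumption~\ref{as:Ass_boundary}, since each row and column of $A_*$ has finitely many positive entries, so the same is true of $A_*(\btheta)$ and its powers).
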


Let $\bar{\Gamma}$ be the closure of $\Gamma$, i.e., $\bar{\Gamma}=\{ \btheta\in\mathbb{R}^d;  \cp(A_*(\btheta))^{-1} \le 1\}$. By Proposition \ref{pr:chiconvex}, $\bar{\Gamma}$ is a convex set. By Remark \ref{rm:chi_unbounded} and Proposition \ref{pr:chi_unbounded} in Appendix \ref{sec:chi_unbounded}, the following property holds.
\begin{proposition} \label{pr:barGamma_bounded}
$\bar{\Gamma}$ is bounded. 
\end{proposition}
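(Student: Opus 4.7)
My approach is to reduce to the one-dimensional unboundedness result stated in Remark~\ref{rm:chi_unbounded} and proved as Proposition~\ref{pr:chi_unbounded} in the appendix. By Proposition~\ref{pr:chiconvex}, $\bar{\Gamma}$ is convex, so it suffices to show that its recession cone is trivial: for every nonzero direction $\be\in\mathbb{R}^d$ and every $\btheta_0\in\mathbb{R}^d$, the function $t\mapsto\cp(A_*(\btheta_0+t\be))^{-1}$ is unbounded as $|t|\to\infty$, which prevents any ray $\{\btheta_0+t\be:t\ge 0\}$ from lying in $\bar{\Gamma}$.

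To obtain this unboundedness, fix $\be\ne\bzero$ and $\btheta_0\in\mathbb{R}^d$, and group the summands of $A_*(\btheta_0+t\be) = \sum_{\bi} e^{\langle\bi,\btheta_0\rangle+t\langle\bi,\be\rangle}A_{\bi}$ by $k=\langle\bi,\be\rangle$:
\[
A_*(\btheta_0+t\be) = \sum_{k\in K_{\be}} e^{tk}\, \bar{A}_k, \qquad \bar{A}_k = \sum_{\bi:\langle\bi,\be\rangle=k} e^{\langle\bi,\btheta_0\rangle} A_{\bi},
\]
where $K_{\be}=\{\langle\bi,\be\rangle:\bi\in\{-1,0,1\}^d\}\subset\mathbb{R}$ is finite and symmetric about $0$. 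This presents $t\mapsto A_*(\btheta_0+t\be)$ as the Feynman-Kac operator of a one-dimensional Markov-additive-type kernel $\{\bar{A}_k\}_{k\in K_{\be}}$ with bounded (not necessarily skip-free) jumps. By Assumption~\ref{as:P_irreducible}, $\{\bY_n\}$ is irreducible on $\mathbb{Z}^d\times S_0$, which forces the projected process $\{(\langle\bX_n,\be\rangle,J_n)\}$ to be irreducible on its level-phase state space; since positive tilting by $e^{\langle\bi,\btheta_0\rangle}$ preserves zero/nonzero patterns, the doubly-infinite block matrix associated with $\{\bar{A}_k\}$ inherits this irreducibility. Applying Proposition~\ref{pr:chi_unbounded} (using, when $|K_{\be}|>3$, the block-tridiagonal embedding of a block multidiagonal matrix described in Remark~\ref{re:multidiagonal_cp}) to this one-dimensional kernel gives $\cp(A_*(\btheta_0+t\be))^{-1}\to\infty$ as $|t|\to\infty$, completing the reduction.

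The main obstacle is justifying the above reduction when $|K_{\be}|>3$ (multi-step rather than skip-free jumps) and when the projection level set $\langle\mathbb{Z}^d,\be\rangle$ is dense in $\mathbb{R}$ (for $\be$ with incommensurable components). The former is addressed by Remark~\ref{re:multidiagonal_cp}, which embeds a block multidiagonal matrix into a block tridiagonal one without altering the convergence-parameter identity, and the analogous embedding preserves irreducibility and hence the conclusion of Proposition~\ref{pr:chi_unbounded}. The latter is not a genuine issue, since $\cp(A_*(\btheta_0+t\be))$ depends only on the phase-indexed matrix structure, and the irreducibility hypothesis of Proposition~\ref{pr:chi_unbounded} is an abstract matrix-theoretic condition independent of how levels are enumerated.
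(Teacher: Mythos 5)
Your overall strategy---use the log-convexity from Proposition~\ref{pr:chiconvex} to reduce boundedness of $\bar\Gamma$ to triviality of its recession cone, i.e.\ to showing that $\chi(t\be)=\cp(A_*(t\be))^{-1}$ eventually exceeds $1$ along every nonzero ray---is exactly the argument the paper's terse proof leaves implicit, so the outline is sound. The genuine gap is in the last step. When the coordinates of $\be$ are mutually irrational, $K_{\be}=\{\langle\bi,\be\rangle:\bi\in\{-1,0,1\}^d\}$ generates a dense subgroup of $\mathbb{R}$, so there is no $a>0$ with $K_{\be}\subset a\mathbb{Z}$ and hence no well-defined bi-infinite block multidiagonal matrix $P$ to which Proposition~\ref{pr:chi_unbounded} (or its extension via Remark~\ref{re:multidiagonal_cp}) could be applied. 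Declaring that the hypothesis ``is an abstract matrix-theoretic condition independent of how levels are enumerated'' does not fill this hole: the hypothesis of Proposition~\ref{pr:chi_unbounded} is the irreducibility of a concrete matrix $P$ that simply cannot be formed here. You also cannot sidestep this by checking only rational $\be$: a closed convex set can be unbounded with all unboundedness witnessed by a single irrational recession direction (rotate the epigraph of $y=x^2$ so its axis has irrational slope), so triviality of the recession cone must be checked in \emph{every} direction.

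The clean fix is to transplant the \emph{proof} of Proposition~\ref{pr:chi_unbounded}, not its statement; then no one-dimensional kernel, no level enumeration, and no multidiagonal embedding are needed. For any $j\in S_0$, $\be\ne\bzero$ and $t\in\mathbb{R}$,
\[
[A_*(t\be)^n]_{jj}
=\sum_{\bi^1,\dots,\bi^n\in\{-1,0,1\}^d}
[A_{\bi^1}A_{\bi^2}\cdots A_{\bi^n}]_{jj}\,
e^{t\langle\bi^1+\cdots+\bi^n,\be\rangle}.
\]
By Assumption~\ref{as:P_irreducible}, $P$ is irreducible on $\mathbb{Z}^d\times S_0$, so choosing any $\bx_0\in\mathbb{Z}^d$ with $\langle\bx_0,\be\rangle>0$ there exist $n_0\ge 1$ and $\bi^1,\dots,\bi^{n_0}\in\{-1,0,1\}^d$ such that $[A_{\bi^1}\cdots A_{\bi^{n_0}}]_{jj}>0$ and $\bi^1+\cdots+\bi^{n_0}=\bx_0$. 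Hence $[A_*(t\be)^{n_0}]_{jj}\ge c\,e^{t\langle\bx_0,\be\rangle}$ for some $c>0$, so $\chi(t\be)\ge c^{1/n_0}e^{t\langle\bx_0,\be\rangle/n_0}\to\infty$ as $t\to\infty$, for arbitrary $\be\ne\bzero$. Combined with convexity of $\bar\Gamma$, this gives boundedness, and it also renders the Remark~\ref{re:multidiagonal_cp} detour unnecessary even in the commensurable case.
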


%
For $\by=(\bx,j)\in\mathbb{S}_+$, we give an asymptotic inequality for the occupation measure $(\tilde{q}_{\by,\by'}; \by'\in\mathbb{S}_+)$.  
Under Assumption \ref{as:finiteness_tildeQ}, the occupation measure is finite and $(\tilde{q}_{\by,\by'}/E(\tau\,|\,\bY_0=\by); \by'\in\mathbb{S}_+)$ becomes a probability measure. Let $\bY=(\bX,J)$ be a random variable subject to the probability measure, i.e., $P(\bY=\by')= \tilde{q}_{\by,\by'}/E(\tau\,|\,\bY_0=\by)$ for $\by'\in\mathbb{S}_+$.
By the Markov's inequality, for $\btheta\in\mathbb{R}^d$ and for $\bc\in\mathbb{R}_+^d$ such that $\bc\ne\bzero$, we have, for $j'\in S_0$, 
\begin{align*}
\mathbb{E}(e^{\langle \bX,\btheta \rangle} 1(J=j')) 
&\ge e^{k \langle \bc,\btheta \rangle} P(e^{\langle \bX,\btheta \rangle} 1(J=j') \ge e^{k \langle \bc,\btheta \rangle}) \cr
&= e^{k \langle \bc,\btheta \rangle} P(\langle \bX,\btheta \rangle \ge \langle k \bc,\btheta \rangle, J=j' ) \cr
&\ge e^{k \langle \bc,\btheta \rangle} P(\bX \ge k \bc, J=j' ).
\end{align*}
This implies that, for every $\bl\in\mathbb{Z}_+^d$, 
\begin{align}
& [\Phi_{\bx}(\btheta)]_{j,j'} 
\ge e^{k \langle \bc,\btheta \rangle} \sum_{\bx'\ge k \bc} \tilde{q}_{\by,(\bx',j')}
\ge e^{k \langle \bc,\btheta \rangle} \tilde{q}_{\by,(k (\lceil \bc \rceil +\bl),j')}, 
\end{align}
where $\lceil \bc \rceil=(\lceil c_1 \rceil,\lceil c_2 \rceil,...,\lceil c_d \rceil)$ and $\lceil x \rceil$ is the smallest integer greater than or equal to $x$. Hence, considering the convergence domain of $\Phi_{\bx}(\btheta)$, we immediately obtain the following basic inequality.
\begin{lemma} \label{le:limsup_tildeqnn}
For any $\bc\in\mathbb{Z}_+^d$ such that $\bc\ne\bzero$ and for every $(\bx,j)\in\mathbb{S}_+$, $j'\in S_0$ and $\bl\in\mathbb{Z}_+^d$,  
\begin{equation}
 \limsup_{k\to\infty} \frac{1}{k} \log \tilde{q}_{(\bx,j),(k \bc+\bl,j')} \le - \sup_{\btheta\in\calD_{\bx}} \langle \bc,\btheta \rangle. 
 \label{eq:tildeq_upper}
\end{equation}
\end{lemma}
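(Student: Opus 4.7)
The plan is to extract the bound directly from the definition of the matrix moment generating function $\Phi_{\bx}(\btheta)$, using only that it has finite entries on $\calD_{\bx}$. The calculation displayed just before the statement of the lemma already contains all the ingredients; all I would need to do is recast it so that it produces the cleaner statement with $k\bc+\bl$ (rather than the $k(\lceil\bc\rceil+\bl)$ appearing in the motivating probabilistic derivation). Since here $\bc\in\mathbb{Z}_+^d$, so that $\lceil\bc\rceil=\bc$, it is in fact easier to bypass the intermediate ``random vector $\bY$ plus Markov's inequality'' route altogether and extract a one-term estimate from the series defining $\Phi_{\bx}(\btheta)$.

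Concretely, I would fix an arbitrary $\btheta\in\calD_{\bx}$ and recall that
\[
[\Phi_{\bx}(\btheta)]_{j,j'} \;=\; \sum_{\bx'\in\mathbb{Z}_+^d} e^{\langle \bx',\btheta\rangle}\,\tilde{q}_{(\bx,j),(\bx',j')} \;<\;\infty.
\]
Every summand is nonnegative, so dropping all terms except the one indexed by $\bx'=k\bc+\bl$ gives
\[
[\Phi_{\bx}(\btheta)]_{j,j'} \;\ge\; e^{k\langle \bc,\btheta\rangle+\langle\bl,\btheta\rangle}\,\tilde{q}_{(\bx,j),(k\bc+\bl,j')}.
\]
Solving for the occupation-measure entry, taking logarithms, and dividing by $k$, I obtain
\[
\frac{1}{k}\log\tilde{q}_{(\bx,j),(k\bc+\bl,j')} \;\le\; -\langle \bc,\btheta\rangle - \frac{\langle\bl,\btheta\rangle}{k} + \frac{1}{k}\log [\Phi_{\bx}(\btheta)]_{j,j'}.
\]
Since $\btheta\in\calD_{\bx}$ forces $[\Phi_{\bx}(\btheta)]_{j,j'}<\infty$, both the last two terms vanish as $k\to\infty$, yielding $\limsup_{k\to\infty}\frac{1}{k}\log\tilde{q}_{(\bx,j),(k\bc+\bl,j')}\le -\langle\bc,\btheta\rangle$.

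Finally, because $\btheta\in\calD_{\bx}$ was arbitrary, I take the supremum over $\calD_{\bx}$ on the right-hand side to conclude the claimed inequality. There is no genuine obstacle here: the argument is a one-term Chernoff/Chebyshev-style estimate, and the only point requiring care is the harmless observation that finiteness of $[\Phi_{\bx}(\btheta)]_{j,j'}$ on $\calD_{\bx}$ (which is precisely the defining property of the convergence domain) is enough to kill the additive $O(1/k)$ corrections when passing to the $\limsup$.
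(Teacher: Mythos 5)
Your proof is correct and proceeds by the same underlying estimate as the paper (a single term of the power series for $[\Phi_{\bx}(\btheta)]_{j,j'}$ is bounded by the full sum, which is finite on $\calD_{\bx}$), but you strip away the intermediate ``probability measure plus Markov's inequality'' framing and extract the needed term directly. This is a mild simplification, and it also lands exactly on the index $k\bc+\bl$ of the lemma's statement, whereas the paper's motivating calculation is phrased for real $\bc$ and produces the index $k(\lceil\bc\rceil+\bl)$, which the reader must then reconcile with the integer case; your version avoids that small mismatch. The final passage from ``for every $\btheta\in\calD_{\bx}$'' to the supremum is handled correctly, since the left-hand limsup does not depend on $\btheta$.
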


%
%

%
%
\section{QBD representations for the MMRW} \label{sec:QBDrepresentation}

In this section, we make one-dimensional QBD processes with countably many phases from the $d$-dimensional MMRW defined in Sect.\ \ref{sec:intro} and obtain upper bounds for the convergence parameters of their rate matrices. Those upper bounds will give lower bounds for the asymptotic decay rates of the occupation measures in the original MMRW.

\subsection{QBD representation with level direction vector $\bone$}

Let $\{\bY_n\}=\{(\bX_n,J_n)\}$ be a $d$-dimensional MMRW.  
In order to use the results in Sect.\ \ref{sec:RandGmatrix}, hereafter, we assume the following condition.
\begin{assumption} \label{as:Q_irreducible}
$P_+$ is irreducible.
\end{assumption}

Under this assumption, $P$ is irreducible regardless of Assumption \ref{as:P_irreducible} and every element of $\tilde{P}_+$ is positive. 
Let $\tau$ be the stopping time defined in Sect.\ \ref{sec:intro}, i.e.,  $\tau=\inf\{n\ge 0; \bY_n\in\mathbb{S}\setminus\mathbb{S}_+\}$. According to Example 4.2 of \cite{Miyazawa12}, define a one-dimensional absorbing QBD process $\{\hat{\bY}_n\}=\{(\hat{X}_n,\hat{\bJ}_n)\}$ as
\[
\hat{X}_n = \min_{1\le i\le d} X_{i,\tau\wedge n},\quad 
\hat{\bJ}_n = (\hat{Z}_{0,n},\hat{Z}_{1,n},...,\hat{Z}_{d-1,n},\hat{J}_{n}),
\]
where $x\wedge y= \min\{x,y\}$, $\hat{Z}_{0,n}=\min\{i\in\{1,2,...,d\}; X_{i,\tau\wedge n}=\hat{X}_n\}$,   
\[
\hat{Z}_{i,n} = \left\{ \begin{array}{ll}
X_{i,\tau\wedge n}-\hat{X}_n, & i<\hat{Z}_{0,n}, \cr
X_{i+1,\tau\wedge n}-\hat{X}_n, & i\ge \hat{Z}_{0,n},  
\end{array} \right.
\quad i=1,2,...,d-1,
\]
and $\hat{J}_n=J_{\tau\wedge n}$. We restrict the state space of $\{\hat{\bY}_n\}$ to $\mathbb{Z}_+\times(\mathbb{N}_d\times\mathbb{Z}_+^{d-1}\times S_0)$, where $\mathbb{N}_d=\{1,2,...,d\}$. 
For $k\in\mathbb{Z}_+$, the $k$-th level set of $\{\hat{\bY}_n\}$ is given by $\mathbb{L}_k=\{ (\bx,j)=((x_1,x_2,...,x_d),j)\in\mathbb{S}_+; \min_{1\le i\le d} x_i = k \}$ and they satisfy, for $k\ge 0$,  
\begin{equation}
\mathbb{L}_{k+1} = \{(\bx+\bone,j); (\bx,j)\in\mathbb{L}_k \}.
\end{equation}
This means that $\{\hat{\bY}_n\}$ is a QBD process with level direction vector $\bone$. 
The transition probability matrix of $\{\hat{Y}_n\}$ is given in block tri-diagonal form as
\begin{equation} \label{eq:blockformP}
\hat{P} = 
\begin{pmatrix}
\hat{A}_0 & \hat{A}_1 & & & \cr
\hat{A}_{-1} & \hat{A}_0 & \hat{A}_1 & & \cr
& \hat{A}_{-1} & \hat{A}_0 & \hat{A}_1 & \cr
& & \ddots & \ddots & \ddots 
\end{pmatrix}. 
\end{equation}
We omit the specific description of $ \hat{A}_{-1}$,  $\hat{A}_0$ and $\hat{A}_1$. Instead,  in the case where $\hat{Z}_{0,n}=d$ and $\hat{Z}_{i,n}\ge 2$, $i\in\{1,2,...,d-1\}$, we give their description in terms of $A_{\bi_{(d)}},\,\bi_{(d)}=(i_1,i_2,...,i_d)\in\{-1,0,1\}^d$. 
For $k\in\{-1,0,1\}$ and $\bi_{(d-1)}=(i_1,i_2,...,i_{d-1})\in\mathbb{Z}^{d-1}$, define a block $\hat{A}_{k,\bi_{(d-1)}}$ as 
\[
\hat{A}_{k,\bi_{(d-1)}} = \left([\hat{A}_k]_{(d,\bx_{(d-1)},j),(d,\bx_{(d-1)}+\bi_{(d-1)},j')}; j,j'\in S_0 \right), 
\]
where we assume that $\bx_{(d-1)}=(x_1,x_2,...,x_{d-1})\ge 2\,\bone$ and use the fact that the right hand side does not depend on $\bx_{(d-1)}$ because of the space homogeneity of $\{\bY_n\}$. Since the level process $\{\bX_n\}$ of the original MMRW is skip free in all directions, the block $\hat{A}_{k,\bi_{(d-1)}}$ is given as
\begin{equation}
\hat{A}_{k,\bi_{(d-1)}} = \left\{ \begin{array}{ll}
A_{(\bi_{(d-1)}-\bone_{d-1},-1)}, & k=-1,\ \bi_{(d-1)}\in\{0,1,2\}^{d-1}, \cr
A_{(\bi_{(d-1)},0)}, & k=0,\ \bi_{(d-1)}\in\{-1,0,1\}^{d-1}, \cr
A_{(\bi_{(d-1)}+\bone_{d-1},1)}, & k=1,\ \bi_{(d-1)}\in\{-2,-1,0\}^{d-1}, \cr
O, & \mbox{otherwise}, 
\end{array} \right.
\label{eq:hatAki}
\end{equation}
where, for positive integer $l$, we denote by $\bone_l$ an $l$-dimensional vector of $1$'s (see Fig.\ref{fig:blocks_c}). 

\begin{figure}[htbp]
\begin{center}
\includegraphics[width=14cm,trim=20 250 20 20]{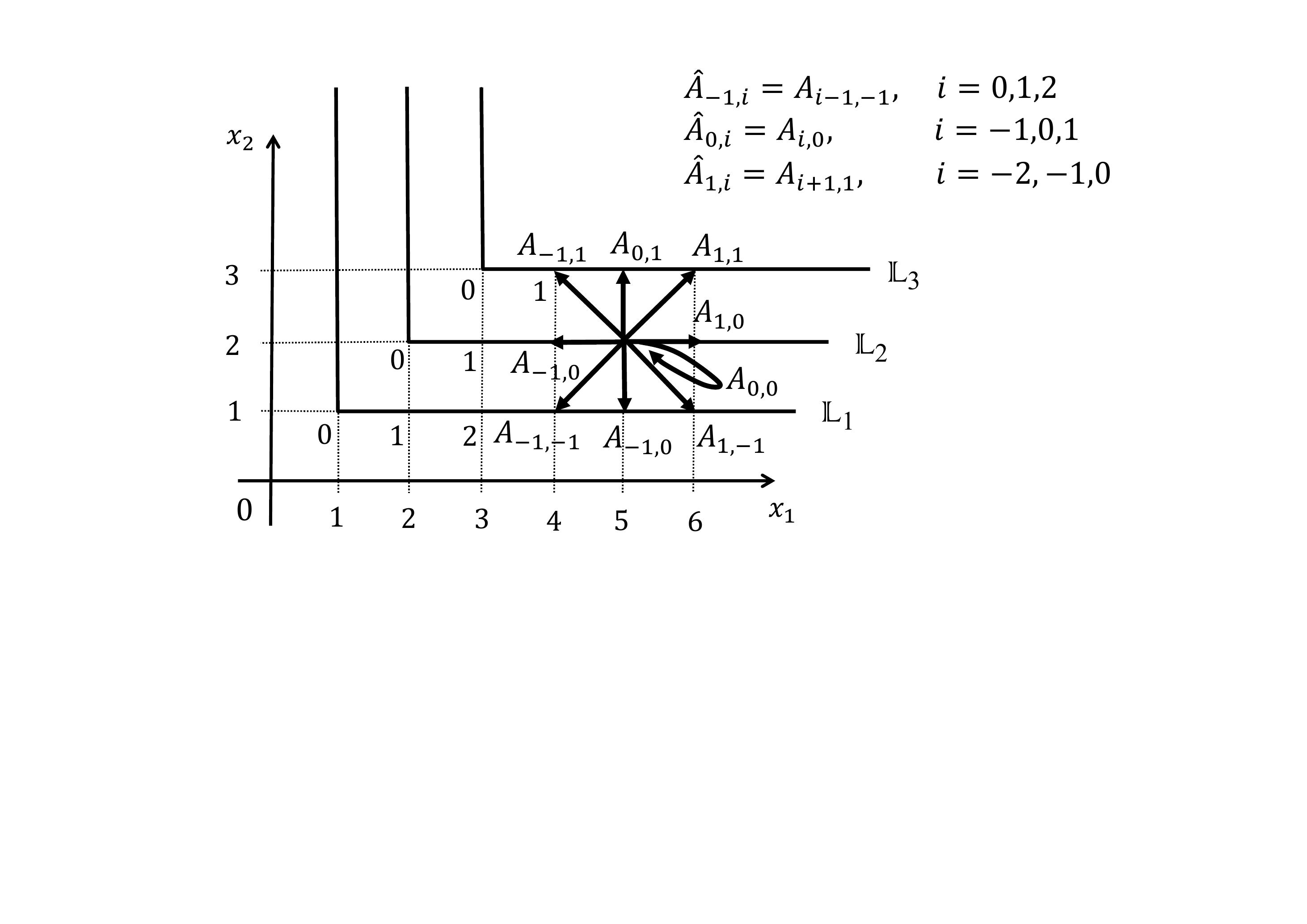} 
\caption{Transition probability blocks of $\{\hat{\bY}_n\}$ ($d=2$)}
\label{fig:blocks_c}
\end{center}
\end{figure}

%
Recall that $\tilde{P}_+$ is the fundamental matrix of the substochastic matrix $P_+$ and each row of $\tilde{P}_+$ is an occupation measure. 
For $\bx,\bx'\in\mathbb{Z}_+^d$, the matrix $N_{\bx,\bx'}$ is given as $N_{\bx,\bx'}=(\tilde{q}_{(\bx,j),(\bx',j')}; j,j'\in S_0)$.  In terms of $N_{\bx,\bx'}$, $\tilde{P}_+$ is represented as $\tilde{P}_+=(N_{\bx,\bx'}; \bx,\bx'\in\mathbb{Z}_+^d)$. 
We derive a matrix geometric representation for $\tilde{P}_+$ according to the QBD process $\{\hat{\bY}_n\}$. 
Under Assumption \ref{as:finiteness_tildeQ}, the summation of each row of $\tilde{P}_+$ is finite and we obtain the following recursive formula for $\tilde{P}_+$:
\begin{equation}
\tilde{P}_+=I+\tilde{P}_+P_+. 
\label{eq:tQQ}
\end{equation}
Define $\hat{N}_0$ as $\hat{N}_0 = \big( \hat{N}_{0,k}; k\in\mathbb{Z}_+ \big)$, where 
\[
\hat{N}_{0,k} = \big( N_{\bx,\bx'}; \bx=(x_1,...,x_d),\,\bx'=(x_1',...,x_d')\in\mathbb{Z}_+^d\ \mbox{s.t.}\  \min_{1\le i\le d} x_i=0,\ \min_{1\le i\le d} x_i'=k \big).
\]
Since $\hat{N}_0$ is a submatrix of $\tilde{P}_+$, we obtain from (\ref{eq:tQQ}) that
%
%
%
\begin{equation}
\hat{N}_0 = \begin{pmatrix} I & O & \cdots \end{pmatrix} + \hat{N}_0 \hat{P},  
\end{equation}
where $P_+$ in \eqref{eq:tQQ} is replaced with $\hat{P}$ and this $\hat{P}$ has the same block structure as $\hat{N}_0$. This equation leads us to  
\begin{equation}
\begin{array}{l}
\hat{N}_{0,0} = I + \hat{N}_{0,0} \hat{A}_0 + \hat{N}_{0,1} \hat{A}_{-1},\\[2pt]
\hat{N}_{0,k} = \hat{N}_{0,k-1} \hat{A}_1 + \hat{N}_{0,k} \hat{A}_0 + \hat{N}_{0,k+1} \hat{A}_{-1},\ k\ge 1.  
\end{array}
\label{eq:N0n}
\end{equation}
Let $\hat{R}$ be the rate matrix generated from the triplet $\{\hat{A}_{-1},\hat{A}_0,\hat{A}_1\}$, which is the minimal nonnegative solution to the matrix quadratic equation:
\begin{equation}
\hat{R} =  \hat{A}_{1} + \hat{R} \hat{A}_{0} + \hat{R}^2 \hat{A}_{-1} .
\end{equation}
Then, the solution to equations (\ref{eq:N0n}) is given as
\begin{align}
&\hat{N}_{0,k} = \hat{N}_{0,0} \hat{R}^k,\quad 
\hat{N}_{0,0} = (I-\hat{A}_0-\hat{R}\hat{A}_{-1})^{-1} = \sum_{k=0}^\infty (\hat{A}_0+\hat{R}\hat{A}_{-1})^k, 
\label{eq:N0n_solution}
\end{align}
where we use the fact that $\cp(\hat{A}_0+\hat{R}\hat{A}_{-1})<1$ since $\tilde{P}_+$ is finite. 

%
Next, we give an upper bound for $\cp(\hat{R})$, the convergence parameter of $\hat{R}$.
For $\theta\in\mathbb{R}$, define a matrix function $\hat{A}_*(\theta)$ as
\[
\hat{A}_*(\theta) = e^{-\theta} \hat{A}_{-1} + \hat{A}_0 + e^{\theta} \hat{A}_1.  
\]
Since $P_+$ is irreducible and the number of positive elements of each row and column of $\hat{A}_*(0)$ is finite, we have, by Lemma \ref{le:RandG_cp}, 
\begin{equation}
\log \cp(\hat{R}) = \sup\{\theta\in\mathbb{R}; \cp(\hat{A}_*(\theta)) > 1 \}.
\label{eq:cpR_1}
\end{equation}
We consider relation between $\cp(A_*(\btheta))$ and $\cp(\hat{A}_*(\theta))$. 
For $\bi_{(d-1)}\in\mathbb{Z}^{d-1}$, define a matrix function $\hat{A}_{*,\bi_{(d-1)}}(\theta)$ as 
\[
\hat{A}_{*,\bi_{(d-1)}}(\theta) = e^{-\theta} \hat{A}_{-1,\bi_{(d-1)}} + \hat{A}_{0,\bi_{(d-1)}} + e^{\theta} \hat{A}_{1,\bi_{(d-1)}}. 
\]
Further define a block matrix $\hat{A}_*^\dagger(\theta)$ as
\[
\hat{A}_*^\dagger(\theta)=\left(\hat{A}_{*,\bx'_{(d-1)}-\bx_{(d-1)}}(\theta); \bx_{(d-1)},\bx'_{(d-1)}\in\mathbb{Z}_+^{d-1}\right),
\]
where $\hat{A}_{*,\bx'_{(d-1)}-\bx_{(d-1)}}(\theta)= O$ if $\bx'_{(d-1)}-\bx_{(d-1)}\notin\{-2,-1,0,1,2\}^{d-1}$. 
The matrix $\hat{A}_*^\dagger(\theta)$ is a submatrix of $\hat{A}_*(\theta)$, obtained by restricting the state space $\mathbb{N}_d\times\mathbb{Z}_+^{d-1}\times S_0$ to $\{d\}\times\mathbb{Z}_+^{d-1}\times S_0$. Hence, we have
\begin{equation}
\cp(\hat{A}_*(\theta)) \le \cp(\hat{A}_*^\dagger(\theta)). 
\label{eq:hatAs_hatAdagger_cp}
\end{equation}
Define a matrix function $\hat{A}_{*,*}(\theta,\btheta_{(d-1)})$ as 
\[
\hat{A}_{*,*}(\theta,\btheta_{(d-1)}) = \sum_{\bi_{(d-1)}\in\{-2,-1,0,1,2\}^{d-1}} e^{\langle \bi_{(d-1)},\btheta_{(d-1)} \rangle} \hat{A}_{*,\bi_{(d-1)}}(\theta),  
\]
where $\btheta_{(d-1)}=(\theta_1,\theta_2,...,\theta_{d-1})$. From \eqref{eq:hatAki}, we see that $\hat{A}_*^\dagger(\theta)$ is a multiple-block-quintuple-diagonal matrix and, applying Remark \ref{re:multidiagonal_cp} to it repeatedly, we obtain 
\begin{equation}
\cp(\hat{A}_*^\dagger(\theta)) = \sup_{\btheta_{(d-1)}\in\mathbb{R}^{d-1}} \cp(\hat{A}_{*,*}(\theta,\btheta_{(d-1)})). 
\label{eq:hatAdagger_hatAss_cp}
\end{equation}
Furthermore, from \eqref{eq:hatAki}, we have
\begin{align}
&\quad \hat{A}_{*,*}(\theta+\sum_{k=1}^{d-1} \theta_k,\btheta_{(d-1)}) \cr
&= \sum_{\bi_{(d-1)}\in\{-2,-1,0,1,2\}^{d-1}} e^{\langle \bi_{(d-1)},\btheta_{(d-1)} \rangle} \left(e^{-\theta-\sum_{k=1}^{d-1} \theta_k} \hat{A}_{-1,\bi_{(d-1)}} + \hat{A}_{0,\bi_{(d-1)}} + e^{\theta+\sum_{k=1}^{d-1} \theta_k} \hat{A}_{1,\bi_{(d-1)}} \right) \cr
&= \sum_{\bi_{(d-1)}\in\{0,1,2\}^{d-1}} e^{-\theta+\langle \bi_{(d-1)}-\bone_{d-1},\btheta_{(d-1)} \rangle} A_{(\bi_{(d-1)}-\bone_{d-1},-1)} 
+ \sum_{\bi_{(d-1)}\in\{-1,0,1\}^{d-1}} e^{\langle \bi_{(d-1)},\btheta_{(d-1)} \rangle} A_{(\bi_{(d-1)},0)} \cr
&\qquad + \sum_{\bi_{(d-1)}\in\{-2,-1,0\}^{d-1}} e^{\theta+\langle \bi_{(d-1)}+\bone_{d-1},\btheta_{(d-1)} \rangle} A_{(\bi_{(d-1)}+\bone_{d-1},1)} \cr
&= A_*(\btheta_{(d-1)},\theta). 
\label{eq:hatAss_As_cp}
\end{align}
Hence, we obtain the following proposition.
\begin{proposition} \label{pr:cp_hatR_upper1}
\begin{equation}
\log\cp(\hat{R}) \le \sup\{ \langle \bone,\btheta \rangle; \cp(A_*(\btheta))>1,\ \btheta\in\mathbb{R}^d\}. 
\label{eq:cp_hatR_upper1}
\end{equation}
\end{proposition}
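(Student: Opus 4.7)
The plan is to chain together the four identities \eqref{eq:cpR_1}, \eqref{eq:hatAs_hatAdagger_cp}, \eqref{eq:hatAdagger_hatAss_cp}, and \eqref{eq:hatAss_As_cp} that have just been established immediately before the statement. Essentially all of the substantive work has already been done in those identities, and the proposition falls out as a one-line change-of-variables corollary. By \eqref{eq:cpR_1}, we have $\log\cp(\hat R)=\sup\{\theta\in\mathbb R:\cp(\hat A_*(\theta))>1\}$, so it suffices to show that every $\theta\in\mathbb R$ with $\cp(\hat A_*(\theta))>1$ can be written as $\langle\bone,\btheta\rangle$ for some $\btheta\in\mathbb R^d$ satisfying $\cp(A_*(\btheta))>1$.

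Fix such a $\theta$. First, I would invoke \eqref{eq:hatAs_hatAdagger_cp} to get $\cp(\hat A_*^\dagger(\theta))\ge\cp(\hat A_*(\theta))>1$, and then \eqref{eq:hatAdagger_hatAss_cp} to conclude that $\sup_{\btheta_{(d-1)}\in\mathbb R^{d-1}}\cp(\hat A_{*,*}(\theta,\btheta_{(d-1)}))>1$. Because this supremum is \emph{strict}, I may select some witness $\btheta_{(d-1)}=(\theta_1,\ldots,\theta_{d-1})\in\mathbb R^{d-1}$ for which $\cp(\hat A_{*,*}(\theta,\btheta_{(d-1)}))>1$; no limiting argument is needed here.

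Next, I apply \eqref{eq:hatAss_As_cp} after a relabeling of its free variable: setting $\tilde\theta:=\theta-\sum_{k=1}^{d-1}\theta_k$ in the role of the first argument, \eqref{eq:hatAss_As_cp} reads $\hat A_{*,*}(\theta,\btheta_{(d-1)})=A_*(\btheta_{(d-1)},\tilde\theta)$. Therefore $\cp(A_*(\btheta_{(d-1)},\tilde\theta))>1$, and the vector $\btheta^*:=(\theta_1,\ldots,\theta_{d-1},\tilde\theta)\in\mathbb R^d$ satisfies $\langle\bone,\btheta^*\rangle=\sum_{k=1}^{d-1}\theta_k+\tilde\theta=\theta$, exactly as required. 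Taking the supremum over admissible $\theta$ yields \eqref{eq:cp_hatR_upper1}.

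There is no genuine obstacle in this proof: the serious content lies upstream in the submatrix comparison \eqref{eq:hatAs_hatAdagger_cp}, the iterated application of Remark~\ref{re:multidiagonal_cp} used to obtain \eqref{eq:hatAdagger_hatAss_cp}, and the algebraic repackaging \eqref{eq:hatAss_As_cp} of the modified blocks $\hat A_{k,\bi_{(d-1)}}$ in terms of the original $A_{\bi_{(d)}}$. The only minor point to watch is that the strict inequality $\cp(\hat A_*(\theta))>1$ propagates through the two inequalities to ensure a true witness $\btheta_{(d-1)}$ exists, so that the resulting $\btheta^*$ lies in $\Gamma$ rather than merely its closure.
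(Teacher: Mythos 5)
Your proof is correct and follows essentially the same route as the paper's: the paper chains the same four identities \eqref{eq:cpR_1}, \eqref{eq:hatAs_hatAdagger_cp}, \eqref{eq:hatAdagger_hatAss_cp}, \eqref{eq:hatAss_As_cp} into a single set-inclusion chain, whereas you unpack the inclusion element-by-element via the change of variables $\tilde\theta=\theta-\sum_{k=1}^{d-1}\theta_k$, but the logic is identical. Your attention to the fact that strict inequality in the supremum guarantees an actual witness $\btheta_{(d-1)}$ is correct and is exactly what the paper's second line ``$\cp(\hat A_{*,*}(\theta,\btheta_{(d-1)}))>1$ for some $\btheta_{(d-1)}$'' is implicitly relying on.
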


\begin{proof}
From \eqref{eq:hatAs_hatAdagger_cp}, \eqref{eq:hatAdagger_hatAss_cp} and \eqref{eq:hatAss_As_cp}, we obtain 
\begin{align}
\{\theta\in\mathbb{R}; \cp(\hat{A}_*(\theta)) > 1 \} 
&\subset \{\theta\in\mathbb{R}; \cp(\hat{A}_*^\dagger(\theta)) > 1 \} \cr
&= \{\theta\in\mathbb{R}; \cp(\hat{A}_{*,*}(\theta,\btheta_{(d-1)})) > 1\ \mbox{for some}\ \btheta_{(d-1)}\in\mathbb{R}^{d-1} \} \cr
&= \left\{\sum_{k=1}^d \theta_k \in\mathbb{R}; \cp\bigg(\hat{A}_{*,*}\Big(\sum_{k=1}^d \theta_k,\btheta_{(d-1)}\Big)\bigg) > 1 \right\} \cr
&= \{ \langle \bone,\btheta \rangle; \cp(A_*(\btheta))>1,\ \btheta\in\mathbb{R}^d\}.
\end{align}
This and \eqref{eq:cpR_1} lead us to inequality \eqref{eq:cp_hatR_upper1}. 
\end{proof}

%
%
\subsection{QBD representation with level direction vector $\bc$}

Letting $\bc=(c_1,c_2,...,c_d)$ be a vector of positive integers, we consider another QBD representation of $\{\bY_n\}=\{(\bX_n,J_n)\}$, whose level direction vector is given by $\bc$. 
For $k\in\{1,2,...,d\}$, denote by ${}^{\bc}\!X_{k,n}$ and ${}^{\bc}\!M_{k,n}$ the quotient and remainder of $X_{k,n}$ divided by $c_k$, respectively, i.e., 
\[
X_{k,n}=c_k {}^{\bc}\!X_{k,n}+{}^{\bc}\!M_{k,n},
\]
where ${}^{\bc}\!X_{k,n}\in\mathbb{Z}$ and ${}^{\bc}\!M_{k,n}\in\{0,1,...,c_k-1\}$. 
Define a process $\{{}^{\bc}\bY_n\}$ as 
\[
{}^{\bc}\bY_n=({}^{\bc}\!\bX_n,({}^{\bc}\!\bM_n,J_n)),
\]
where ${}^{\bc}\!\bX_n=({}^{\bc}\!X_{1,n},{}^{\bc}\!X_{2,n},...,{}^{\bc}\!X_{d,n})$ and ${}^{\bc}\!\bM_n=({}^{\bc}\!M_{1,n},{}^{\bc}\!M_{2,n},...,{}^{\bc}\!M_{d,n})$. The process $\{{}^{\bc}\bY_n\}$ is a $d$-dimensional MMRW with the background process $\{({}^{\bc}\!\bM_n,J_n)\}$ and its state space is given by $\mathbb{Z}^d\times(\prod_{k=1}^d \mathbb{Z}_{0,c_k-1}\times S_0)$, where $\mathbb{Z}_{0,c_k-1}=\{0,1,...,c_k-1\}$. 
The transition probability matrix of $\{{}^{\bc}\bY_n\}$, denoted by ${}^{\bc}\!P$, has a multiple-tridiagonal block structure like $P$. Denote by  ${}^{\bc}\!A_{\bi},\,\bi\in\{-1,0,1\}^d$, the nonzero blocks of ${}^{\bc}\!P$ and define a matrix function ${}^{\bc}\!A_*(\btheta)$ as
\[
{}^{\bc}\!A_*(\btheta) = \sum_{\bi\in\{-1,0,1\}^d} e^{\langle \bi,\btheta \rangle}\, {}^{\bc}\!A_{\bi}. 
 \]
The following relation holds between $A_*(\btheta)$ and ${}^{\bc}\!A_*(\btheta)$.
\begin{proposition} \label{pr:QBDcp}
For any vector $\bc=(c_1,c_2,...,c_d)$ of positive integers, we have 
\begin{equation}
\cp(A_*(\btheta)) = \cp({}^{\bc}\!A_*(\bc\bullet\btheta)),  
\end{equation}
where $\btheta=(\theta_1,\theta_2,...,\theta_d)$ and $\bc\bullet\btheta=(c_1\theta_1,c_2\theta_2,...,c_d\theta_d)$. 
\end{proposition}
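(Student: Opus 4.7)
The plan is to derive an exact identity linking the entries of $A_*(\btheta)^n$ and ${}^{\bc}\!A_*(\bc\bullet\btheta)^n$ via the Feynman--Kac interpretation of these matrices together with the elementary decomposition $X_{k,n}=c_k\,{}^{\bc}\!X_{k,n}+{}^{\bc}\!M_{k,n}$, and then to read off the equality of convergence parameters directly from the series definition of $\cp$.

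First I would record the coordinatewise identity
\[
\langle\bX_n,\btheta\rangle=\langle{}^{\bc}\!\bX_n,\bc\bullet\btheta\rangle+\langle{}^{\bc}\!\bM_n,\btheta\rangle,
\]
obtained by multiplying each scalar equation $X_{k,n}=c_k{}^{\bc}\!X_{k,n}+{}^{\bc}\!M_{k,n}$ by $\theta_k$ and summing. By iterating the Markov property against the one-step Feynman--Kac identity and using space-homogeneity,
\[
[A_*(\btheta)^n]_{j,j'}=\mathbb{E}\bigl[e^{\langle\bX_n,\btheta\rangle}\,1(J_n=j')\,\big|\,\bX_0=\bzero,J_0=j\bigr],
\]
and, because $\{{}^{\bc}\bY_n\}$ is itself a space-homogeneous MMRW with kernel $\{{}^{\bc}\!A_{\bi}\}$,
\[
[{}^{\bc}\!A_*(\bc\bullet\btheta)^n]_{(\bm,j),(\bm',j')}=\mathbb{E}\bigl[e^{\langle{}^{\bc}\!\bX_n,\bc\bullet\btheta\rangle}\,1({}^{\bc}\!\bM_n=\bm',J_n=j')\,\big|\,\bX_0=\bm,J_0=j\bigr],
\]
where the initial condition ${}^{\bc}\!\bX_0=\bzero$, ${}^{\bc}\!\bM_0=\bm$ is the same as $\bX_0=\bm$ for the underlying MMRW.

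Substituting the additive decomposition into the second expression, pulling the constant $e^{-\langle\bm',\btheta\rangle}$ outside the expectation on the event $\{{}^{\bc}\!\bM_n=\bm'\}$, summing over the finite set $\bm'\in\prod_{k=1}^d\mathbb{Z}_{0,c_k-1}$, and finally invoking space-homogeneity of $\{\bY_n\}$ to shift $\bX_0=\bm$ back to $\bX_0=\bzero$, I would arrive at the key identity
\[
\sum_{\bm'}e^{\langle\bm',\btheta\rangle}\,[{}^{\bc}\!A_*(\bc\bullet\btheta)^n]_{(\bm,j),(\bm',j')}=e^{\langle\bm,\btheta\rangle}\,[A_*(\btheta)^n]_{j,j'},
\]
valid for every $n\ge 0$, $\bm$, $j$ and $j'$.

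Finally, multiplying by $r^n$ and summing over $n\ge 0$ preserves this identity (Tonelli, since all summands are nonnegative); because the outer sum on the left has only $\prod_{k=1}^dc_k<\infty$ strictly positive summands, $\sum_n r^n[A_*(\btheta)^n]_{j,j'}<\infty$ for every $(j,j')$ if and only if $\sum_n r^n[{}^{\bc}\!A_*(\bc\bullet\btheta)^n]_{(\bm,j),(\bm',j')}<\infty$ for every $((\bm,j),(\bm',j'))$. By the entrywise definition $\cp(A)=\sup\{r\in\mathbb{R}_+:\sum_n r^n A^n<\infty\}$, this yields $\cp(A_*(\btheta))=\cp({}^{\bc}\!A_*(\bc\bullet\btheta))$. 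The only place requiring genuine care is the Markov-additive unrolling on a countably infinite phase space, but since every quantity in sight is nonnegative there are no convergence subtleties and no irreducibility of either operator is needed; the rest is bookkeeping through the trajectory-level bijection between $\{\bY_n\}$ and $\{{}^{\bc}\bY_n\}$.
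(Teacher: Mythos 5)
Your proof is correct, and it takes a genuinely different route from the paper's.

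The paper first proves an auxiliary result (Proposition~\ref{pr:block_cp}) equating the convergence parameter of a cyclic block matrix $C^{[k]}(k\theta)$ with that of the generating matrix $C_*(\theta)$, by explicitly constructing superharmonic measures in both directions and invoking Seneta's characterization of $\cp$. It then applies this lemma $d$ times, peeling off one coordinate of $\bc$ at a time through a nested block decomposition of ${}^{\bc}\!A_*(\btheta)$ into $c_1\times c_1$, then $c_2\times c_2$, etc.\ cyclic blocks. Your proof bypasses the cyclic-matrix lemma and the peeling entirely: you use the Feynman--Kac representation of $[A_*(\btheta)^n]_{j,j'}$ and $[{}^{\bc}\!A_*(\bc\bullet\btheta)^n]_{(\bm,j),(\bm',j')}$ as expectations over trajectories of the same underlying chain, together with the pointwise decomposition $\langle\bX_n,\btheta\rangle=\langle{}^{\bc}\!\bX_n,\bc\bullet\btheta\rangle+\langle{}^{\bc}\!\bM_n,\btheta\rangle$, to obtain the exact identity
\begin{equation*}
\sum_{\bm'}e^{\langle\bm',\btheta\rangle}\,[{}^{\bc}\!A_*(\bc\bullet\btheta)^n]_{(\bm,j),(\bm',j')}=e^{\langle\bm,\btheta\rangle}\,[A_*(\btheta)^n]_{j,j'}
\end{equation*}
for every $n$. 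Since the left side is a finite sum with strictly positive weights $e^{\langle\bm',\btheta\rangle}$, the series $\sum_n r^n A_*(\btheta)^n$ is entrywise finite if and only if $\sum_n r^n\,{}^{\bc}\!A_*(\bc\bullet\btheta)^n$ is, and equality of the convergence parameters follows from the definition. Your route is shorter, avoids the need for the superharmonic-measure machinery, and (as you note) does not require irreducibility of either Feynman--Kac operator, whereas the paper's Proposition~\ref{pr:block_cp} assumes $C_*(0)$ is irreducible. The paper's approach, on the other hand, establishes the cyclic-block lemma as a reusable tool (Remark~\ref{re:multidiagonal_cp} relies on a related block-rearrangement technique), so the extra generality is put to work elsewhere. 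One small point worth stating explicitly in a polished write-up: the entrywise definition $\cp(A)=\sup\{r:\sum_n r^nA^n<\infty\}$ requires convergence of every entry, and your identity controls each entry $(\bm,j),(\bm',j')$ individually in both directions, which is exactly what is needed.
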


We use the following proposition for proving Proposition \ref{pr:QBDcp}. 
\begin{proposition} \label{pr:block_cp}
Let $C_{-1}$, $C_0$ and $C_1$ be $m\times m$ nonnegative matrices, where $m$ can be countably infinite, and define a matrix function $C_*(\theta)$ as 
\begin{equation}
C_*(\theta) = e^{-\theta} C_{-1} + C_0 + e^{\theta} C_1.
\end{equation}
Assume that, for any $n\in\mathbb{Z}_+$, $C_*(0)^n$ is finite and $C_*(0)$ is irreducible. 
Let $k$ be a positive integer and define a $k\times k$ block matrix $C^{[k]}(\theta)$ as 
\begin{align}
&C^{[k]}(\theta) = 
\begin{pmatrix}
C_0 & C_1 & & & e^{-\theta} C_{-1} \cr
C_{-1} & C_0 & C_1 & & & \cr
& \ddots & \ddots & \ddots & & \cr
& & C_{-1} & C_0 & C_1 \cr
e^{\theta} C_1 & & & C_{-1} & C_0
\end{pmatrix}.
\end{align}
Then, we have $\cp(C^{[k]}(k \theta))=\cp(C_*(\theta))$. 
\end{proposition}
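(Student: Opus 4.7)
The plan is to reduce the statement to an identity for block circulant matrices by conjugating away the boundary weights $e^{\pm k\theta}$. Define the positive block-diagonal matrix $D=\diag(I,e^{-\theta}I,e^{-2\theta}I,\ldots,e^{-(k-1)\theta}I)$ and set $\tilde C(\theta)=D\,C^{[k]}(k\theta)\,D^{-1}$. A block-by-block computation shows that conjugation by $D$ rescales every super-diagonal block $C_1$ to $e^{\theta}C_1$, every sub-diagonal block $C_{-1}$ to $e^{-\theta}C_{-1}$, and turns the two corner blocks $e^{-k\theta}C_{-1}$ and $e^{k\theta}C_1$ into $e^{-\theta}C_{-1}$ and $e^{\theta}C_1$ respectively. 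Hence $\tilde C(\theta)$ is a genuine block circulant matrix whose first block row is $(C_0,e^{\theta}C_1,O,\ldots,O,e^{-\theta}C_{-1})$. Since the $k$ diagonal factors $e^{-(j-1)\theta}$ are strictly positive and finite, the entries of $\tilde C(\theta)^n=D\,C^{[k]}(k\theta)^n\,D^{-1}$ differ from those of $C^{[k]}(k\theta)^n$ by bounded positive factors $e^{-(i-j)\theta}$, so the two series $\sum_n r^n\tilde C(\theta)^n$ and $\sum_n r^n C^{[k]}(k\theta)^n$ converge together, giving $\cp(\tilde C(\theta))=\cp(C^{[k]}(k\theta))$.

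Next, I would exploit the block circulant structure. Writing
\[
\tilde C(\theta)=I_k\otimes C_0+S\otimes(e^{\theta}C_1)+S^{k-1}\otimes(e^{-\theta}C_{-1}),
\]
where $S$ is the $k\times k$ cyclic shift satisfying $S^k=I_k$, and expanding the $n$-th power, the $(1,j)$ block of $\tilde C(\theta)^n$ equals the sum of all products $B_{l_1}\cdots B_{l_n}$ with $B_0=C_0$, $B_1=e^{\theta}C_1$, $B_{k-1}=e^{-\theta}C_{-1}$, taken over sequences $(l_1,\ldots,l_n)\in\{0,1,k-1\}^n$ with $l_1+\cdots+l_n\equiv j-1\pmod k$. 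Summing over $j\in\{1,\ldots,k\}$ collects every such product and yields the key identity
\[
\sum_{j=1}^k[\tilde C(\theta)^n]_{1,j}=\bigl(C_0+e^{\theta}C_1+e^{-\theta}C_{-1}\bigr)^n=C_*(\theta)^n,
\]
which is well-defined because the hypothesis that all iterates of $C_*(0)$ are finite forces every matrix product in sight to be finite.

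By nonnegativity, each block $[\tilde C(\theta)^n]_{1,j}$ is elementwise dominated by $C_*(\theta)^n$. Hence for any $r\ge 0$ with $\sum_n r^n C_*(\theta)^n<\infty$, we have $\sum_n r^n[\tilde C(\theta)^n]_{1,j}<\infty$ for every $j$; by cyclic shift symmetry the same holds for every $(i,j)$, so $\cp(\tilde C(\theta))\ge\cp(C_*(\theta))$. Conversely, if $\sum_n r^n\tilde C(\theta)^n<\infty$ then summing its first block row entry-wise yields $\sum_n r^n C_*(\theta)^n<\infty$, so $\cp(C_*(\theta))\ge\cp(\tilde C(\theta))$. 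Combined with the first paragraph this gives the desired equality $\cp(C^{[k]}(k\theta))=\cp(C_*(\theta))$. The only delicate point is the block circulant identity for $\tilde C(\theta)^n$: it is an algebraic consequence of $S^k=I_k$ and the commutativity of powers of $S$, and it remains valid when the phase dimension $m$ is countably infinite because it merely rearranges finite sums of products. Irreducibility of $C_*(0)$ plays no role in this argument and is not needed for the conclusion.
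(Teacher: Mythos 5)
Your proof is correct and takes a genuinely different route from the paper. The paper's argument constructs subinvariant measures by hand: given a $\bu>\bzero$ with $\beta\bu^\top C_*(\theta)\le\bu^\top$, it builds the block vector $\bu^{[k]}=(e^{(k-1)\theta}\bu,\dots,e^{\theta}\bu,\bu)$ and checks directly that $\beta\bu^{[k]}C^{[k]}(k\theta)\le\bu^{[k]}$; conversely, from a subinvariant measure $\bu^{[k]}=(\bu_1,\dots,\bu_k)$ for $C^{[k]}(k\theta)$ it builds $\bu=\sum_j e^{-(k-j)\theta}\bu_j$ and verifies subinvariance for $C_*(\theta)$. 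It then invokes Seneta's characterization of the convergence parameter via subinvariant measures (Theorem 6.3), which is where irreducibility enters. Your argument instead conjugates $C^{[k]}(k\theta)$ by the block-diagonal similarity $D$ into a genuine block circulant $\tilde C(\theta)=\sum_{\ell\in\{0,1,k-1\}}S^{\ell}\otimes B_{\ell}$, uses $S^k=I_k$ to identify the $(i,j)$ block of $\tilde C(\theta)^n$ as the restricted sum $M_{(j-i)\bmod k}^{(n)}=\sum_{\,l_1+\cdots+l_n\equiv j-i\pmod k}B_{l_1}\cdots B_{l_n}$, and reads off the sandwich $M_\ell^{(n)}\le C_*(\theta)^n=\sum_\ell M_\ell^{(n)}$. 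Comparing radii of convergence of the entrywise power series then gives the claim. Two things your route buys: it works entirely at the level of nonnegative series rearrangements (justified by Tonelli), so it genuinely does not need irreducibility of $C_*(0)$; and the block circulant identity $\sum_j[\tilde C(\theta)^n]_{1,j}=C_*(\theta)^n$ makes the factor-of-$k$ interplay between $\theta$ and $k\theta$ transparent, whereas the paper's proof encodes it silently in the geometric weights of $\bu^{[k]}$. What the paper's route buys is brevity and a reusable pattern, since the same measure construction recurs throughout Section 2. Both are sound; just keep the caveat explicit that the interchange of the infinitely many inner sums in $B_{l_1}\cdots B_{l_n}$ with the finite outer sum over $\{0,1,k-1\}^n$ is a nonnegative-series (Tonelli) argument, which is exactly why the hypothesis $C_*(0)^n<\infty$ is needed and suffices.
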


\begin{proof}
First, assume that, for a positive number $\beta$ and measure $\bu$, $\beta \bu C_*(\theta) \le \bu$, and define a measure $\bu^{[k]}$ as
\[
\bu^{[k]} = 
\begin{pmatrix} 
e^{(k-1)\theta} \bu & e^{(k-2)\theta} \bu & \cdots & e^{\theta} \bu & \bu
\end{pmatrix}.
\]
Then, we have $\beta \bu^{[k]} C^{[k]}(k\theta) \le \bu^{[k]}$ and, by Theorem 6.3 of \cite{Seneta06}, we obtain $\cp(C_*(\theta)) \le \cp(C^{[k]}(k\theta))$. 

Next, assume that, for a positive number $\beta$ and measure $\bu^{[k]}=\begin{pmatrix} \bu_1 & \bu_2 & \cdots & \bu_k \end{pmatrix}$, $\beta \bu^{[k]} C^{[k]}(k\theta) \le \bu^{[k]}$, and define a measure $\bu$ as 
\[
\bu = e^{-(k-1)\theta} \bu_1 + e^{-(k-2)\theta} \bu_2 + \cdots + e^{-\theta} \bu_{k-1} +\bu_k.
\]
Further, define a nonnegative matrix $V^{[k]}$ as
\[
V^{[k]} = 
\begin{pmatrix}
e^{-(k-1)\theta} I & e^{-(k-2)\theta} I  & \cdots & e^{-\theta} I & I
\end{pmatrix}.
\]
Then, we have $\beta \bu^{[k]} C^{[k]}(k\theta) V^{[k]} = \beta \bu C_*(\theta)$ and $\bu^{[k]} V^{[k]} = \bu$. Hence, we have $\beta \bu C_*(\theta) \le \bu$ and this implies $\cp(C^{[k]}(k\theta)) \le \cp(C_*(\theta))$.
\end{proof}

%
\begin{proof}[Proof of Proposition  \ref{pr:QBDcp}]
Let  $\btheta=(\theta_1,\theta_2,...,\theta_d)$ be a $d$-dimensional vector in $\mathbb{R}^d$ and, for $k\in\{1,2,...,d\}$, define $\btheta_{(k)}$ and $\btheta_{[k]}$ as $\btheta_{(k)}=(\theta_1,\theta_2,...,\theta_k)$ and $\btheta_{[k]}=(\theta_k,\theta_{k+1},...,\theta_d)$, respectively. 
We consider the multiple-block structure of ${}^{\bc}\!A_*(\btheta)$ according to $\mathbb{Z}_{0,c_1-1}\times\mathbb{Z}_{0,c_2-1}\times \cdots \times \mathbb{Z}_{0,c_d-1}\times S_0$,  the state space of the background process of $\{{}^{\bc}\bY_n\}$. 
For $k\in\{-1,0,1\}$, define ${}^{\bc}\!A_k^{[1]}(\btheta_{[2]})$ as
\[
{}^{\bc}\!A_k^{[1]}(\btheta_{[2]}) = \sum_{\bi_{[2]}\in\{-1,0,1\}^{d-1}} e^{\langle \bi_{[2]},\btheta_{[2]} \rangle}\, {}^{\bc}\!A_{(k,\bi_{[2]})}
\]
where $\bi_{[2]}=(i_2,i_3,...,i_d)$. Due to the skip-free property of the original process, they are given in $c_1\times c_1$ block form as
\begin{align*}
& {}^{\bc}\!A_0^{[1]}(\btheta_{[2]}) = 
\begin{pmatrix}
B^{[1]}_{0}(\btheta_{[2]}) & B^{[1]}_{1}(\btheta_{[2]}) & & &  \cr
B^{[1]}_{-1}(\btheta_{[2]}) & B^{[1]}_{0}(\btheta_{[2]}) & B^{[1]}_{1}(\btheta_{[2]}) & & \cr
& \ddots & \ddots & \ddots & & \cr
& & B^{[1]}_{-1}(\btheta_{[2]}) & B^{[1]}_{0}(\btheta_{[2]}) & B^{[1]}_{1}(\btheta_{[2]}) \cr
& & & B^{[1]}_{-1}(\btheta_{[2]}) & B^{[1]}_{0}(\btheta_{[2]})
\end{pmatrix}, \cr
& {}^{\bc}\!A_{-1}^{[1]}(\btheta_{[2]}) = 
\begin{pmatrix}
& & B^{[1]}_{-1}(\btheta_{[2]}) \cr
& & \cr
& O& 
\end{pmatrix}, \quad
{}^{\bc}\!A_1^{[1]}(\btheta_{[2]}) = 
\begin{pmatrix}
& O & \cr
& & \cr
B^{[1]}_{1}(\btheta_{[2]}) & &
\end{pmatrix}, 
\end{align*}
where each $B^{[1]}_{i}(\btheta_{[2]})$ is a matrix function of $\btheta_{[2]}$ and we use the fact that ${}^{\bc}\!M_{1,n}$ is the remainder of $X_{1,n}$ divided by $c_1$. Hence, ${}^{\bc}\!A_*(\btheta)$ is given in $c_1\times c_1$ block form as
\begin{align}
{}^{\bc}\!A_*(\btheta) &= 
e^{-\theta_1}\, {}^{\bc}\!A_{-1}^{[1]}(\btheta_{[2]}) + {}^{\bc}\!A_0^{[1]}(\btheta_{[2]}) + e^{\theta_1}\, {}^{\bc}\!A_1^{[1]}(\btheta_{[2]}) \cr
&= \begin{pmatrix}
B^{[1]}_{0}(\btheta_{[2]}) & B^{[1]}_{1}(\btheta_{[2]}) & & & e^{-\theta_1} B^{[1]}_{-1}(\btheta_{[2]}) \cr
B^{[1]}_{-1}(\btheta_{[2]}) & B^{[1]}_{0}(\btheta_{[2]}) & B^{[1]}_{1}(\btheta_{[2]}) & & \cr
& \ddots & \ddots & \ddots & & \cr
& & B^{[1]}_{-1}(\btheta_{[2]}) & B^{[1]}_{0}(\btheta_{[2]}) & B^{[1]}_{1}(\btheta_{[2]}) \cr
e^{\theta_1} B^{[1]}_{1}(\btheta_{[2]}) & & & B^{[1]}_{-1}(\btheta_{[2]}) & B^{[1]}_{0}(\btheta_{[2]})
\end{pmatrix}.  
\label{eq:cAs}
\end{align}
Define a matrix function $B^{[1]}_*(\theta_1,\btheta_{[2]})$ as
\begin{equation}
B^{[1]}_*(\theta_1,\btheta_{[2]}) = e^{-\theta_1} B^{[1]}_{-1}(\btheta_{[2]})+B^{[1]}_{0}(\btheta_{[2]})+e^{\theta_1} B^{[1]}_{0}(\btheta_{[2]}). 
\label{eq:B1s}
\end{equation}
Then, by Proposition \ref{pr:block_cp}, we have
\begin{equation}
\cp({}^{\bc}\!A_*(c_1\theta_1,\btheta_{[2]}))=\cp(B^{[1]}_*(\theta_1,\btheta_{[2]})). 
\end{equation}
Analogously, for $i_1\in\{-1,0,1\}$, $B^{[1]}_{i_1}(\btheta_{[2]})$ is represented in $c_2\times c_2$ block form as
\begin{equation}
B^{[1]}_{i_1}(\btheta_{[2]}) = 
\begin{pmatrix}
B^{[2]}_{i_1,0}(\btheta_{[3]}) & B^{[2]}_{i_1,1}
(\btheta_{[3]}) & & & e^{-\theta_2} B^{[2]}_{i_1,-1}(\btheta_{[3]}) \cr
B^{[2]}_{i_1,-1}(\btheta_{[3]}) & B^{[2]}_{i_1,0}(\btheta_{[3]}) & B^{[2]}_{i_1,1}(\btheta_{[3]}) & & \cr
& \ddots & \ddots & \ddots & & \cr
& & B^{[2]}_{i_1,-1}(\btheta_{[3]}) & B^{[2]}_{i_1,0}(\btheta_{[3]}) & B^{[2]}_{i_1,1}(\btheta_{[3]}) \cr
e^{\theta_2} B^{[2]}_{i_1,1}(\btheta_{[3]}) & & & B^{[2]}_{i_1,-1}(\btheta_{[3]}) & B^{[2]}_{i_1,0}(\btheta_{[3]})
\end{pmatrix}, 
\label{eq:B1i1}
\end{equation}
where each $B^{[2]}_{i_1,i_2}(\btheta_{[3]})$ is a matrix function of $\btheta_{[3]}$. Define a matrix function $B^{[2]}_*(\theta_1,\theta_2,\btheta_{[3]})$ as
\[
B^{[2]}_*(\theta_1,\theta_2,\btheta_{[3]}) = \sum_{i_1,i_2\in\{-1,0,1\}} e^{i_1\theta_1+i_2\theta_2} B^{[2]}_{i_1,i_2}(\btheta_{[3]}). 
\]
Then, by Proposition \ref{pr:block_cp}, we obtain from \eqref{eq:B1s} and \eqref{eq:B1i1} that 
\begin{equation}
\cp(B^{[1]}_*(\theta_1,c_2 \theta_2,\btheta_{[3]})) = \cp(B^{[2]}_*(\theta_1,\theta_2,\btheta_{[3]})). 
\end{equation}
Repeating this procedure more $(d-3)$ times, we obtain
\begin{equation}
\cp(B^{[d-1]}_*(\btheta_{(d-1)},c_d \theta_d)) = \cp(B^{[d]}_*(\btheta_{(d-1)},\theta_d)), 
\end{equation}
where 
\begin{align*}
&B^{[d]}_*(\btheta_{(d-1)},\theta_d) = \sum_{\bi_{(d-1)}\in\{-1,0,1\}^{d-1}}\, \sum_{i_d\in\{-1,0,1\}} e^{\langle \bi_{(d-1)},\btheta_{(d-1)} \rangle+i_d\theta_d} B^{[d]}_{\bi_{(d-1)},i_d}, \cr
&B^{[d]}_{\bi_{(d-1)},i_d} = A_{(\bi_{(d-1)},i_d)},
\end{align*}
and $\bi_{(d-1)}=(i_1,i_2,...,i_{d-1})$. As a result, we have 
\begin{align}
\cp({}^{\bc}\!A_*(\bc\bullet\btheta))
&=\cp(B^{[1]}_*(\theta_1,\bc_{[2]}\bullet\btheta_{[2]})) \cr
&=\cp(B^{[2]}_*(\btheta_{(2)},\bc_{[3]}\bullet\btheta_{[3]})) \cr
&\quad \cdots \cr
&= \cp(B^{[d-1]}_*(\btheta_{(d-1)},c_d\theta_d)) = \cp(A_*(\btheta)), 
\end{align}
where $\bc_{[k]}=(c_k,c_{k+1},...,c_d)$, and this completes the proof. 
\end{proof}

%
Next, we apply the results of the previous subsection to the $d$-dimensional MMRW $\{{}^{\bc}\bY_n\}$. Let $\{{}^{\bc}\hat{\bY}_n\}$ be a one-dimensional absorbing QBD process with level direction vector $\bone$, generated from $\{{}^{\bc}\bY_n\}$. The process $\{{}^{\bc}\hat{\bY}_n\}$ is given as 
\[
{}^{\bc}\hat{\bY}_n = ({}^{\bc}\!\hat{X}_n, ({}^{\bc}\!\hat{\bZ}_n,{}^{\bc}\!\hat{\bM}_n,\hat{J}_n) ),
\]
where
\begin{align*}
& {}^{\bc}\!\hat{X}_n = \min_{1\le i\le d} {}^{\bc}\!X_{i,\tau\wedge n},\\
& {}^{\bc}\!\hat{\bZ}_n = ({}^{\bc}\!\hat{Z}_{0,n},{}^{\bc}\!\hat{Z}_{1,n},...,{}^{\bc}\!\hat{Z}_{d-1,n}), \\
& {}^{\bc}\!\hat{Z}_{0,n}=\min\!\big\{i\in\{1,2,...,d\}; {}^{\bc}\!X_{i,\tau\wedge n}={}^{\bc}\!\hat{X}_n \big\}, \\
& {}^{\bc}\!\hat{Z}_{i,n} = \left\{ \begin{array}{ll}
{}^{\bc}\!X_{i,\tau\wedge n}-{}^{\bc}\!\hat{X}_n, & i<{}^{\bc}\!\hat{Z}_{0,n}, \cr
{}^{\bc}\!X_{i+1,\tau\wedge n}-{}^{\bc}\!\hat{X}_n, & i\ge {}^{\bc}\!\hat{Z}_{0,n}, 
\end{array} \right. 
i=1,2,...,d-1,  \\
& {}^{\bc}\!\hat{\bM}_n=({}^{\bc}\!\hat{M}_{1,n},{}^{\bc}\!\hat{M}_{2,n},...,{}^{\bc}\!\hat{M}_{d,n})
=({}^{\bc}\!M_{1,\tau\wedge n},{}^{\bc}\!M_{2,\tau\wedge n},...,{}^{\bc}\!M_{d,\tau\wedge n}), \\
&\hat{J}_n=J_{\tau\wedge n},  
\end{align*}
and $\tau$ is the stopping time at which the original MMRW $\{\bY_n\}$ enters $\mathbb{S}\setminus\mathbb{S}_+$ for the first time. 
We restrict the state space of $\{{}^{\bc}\hat{\bY}_n\}$ to $\mathbb{Z}_+\times(\mathbb{N}_d\times\mathbb{Z}_+^{d-1}\times\prod_{k=1}^d \mathbb{Z}_{0,c_k-1}\times S_0)$. 
For $k\in\mathbb{Z}_+$, the $k$-th level set of $\{{}^{\bc}\hat{\bY}_n\}$ is given by 
\begin{equation}
{}^{\bc}\mathbb{L}_k = \left\{ (\bx,j)\in\mathbb{Z}_+^d\times S_0; \min_{1\le i\le d} \lfloor x_i/c_i \rfloor = k \right\},
\end{equation}
where $\lfloor x \rfloor$ is the maximum integer less than or equal to $x$. The level sets satisfy, for $k\ge 0$,  
\begin{equation}
{}^{\bc}\mathbb{L}_{k+1} = \{(\bx+\bc,j); (\bx,j)\in{}^{\bc}\mathbb{L}_k \}.
\end{equation}
This means that $\{{}^{\bc}\hat{\bY}_n\}$ is a QBD process with level direction vector $\bc$. Let ${}^{\bc}\!\hat{R}$ be the rate matrix of the QBD process $\{{}^{\bc}\hat{\bY}_n\}$. An upper bound for the convergence parameter of ${}^{\bc}\!\hat{R}$ is given as follows.
\begin{lemma} \label{le:cp_hatcR_upper1}
\begin{equation}
\log\cp({}^{\bc}\!\hat{R}) \le \sup\{ \langle \bc,\btheta \rangle; \cp(A_*(\btheta))>1,\ \btheta\in\mathbb{R}^d\}. 
\label{eq:cp_hatcR_upper1}
\end{equation}
\end{lemma}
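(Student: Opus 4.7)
The plan is to reduce Lemma \ref{le:cp_hatcR_upper1} to Proposition \ref{pr:cp_hatR_upper1} applied to the auxiliary $d$-dimensional MMRW $\{{}^{\bc}\bY_n\}$, combined with the scaling identity Proposition \ref{pr:QBDcp}. The key observation is that $\{{}^{\bc}\hat{\bY}_n\}$ is nothing other than the QBD process with level direction vector $\bone$ built from $\{{}^{\bc}\bY_n\}$ in exactly the manner of the previous subsection. Indeed, since the integers $c_i$ are positive, $X_{i,n}<0$ is equivalent to ${}^{\bc}\!X_{i,n}<0$, so the absorbing time $\tau$ defined through $\{\bY_n\}$ coincides with the absorbing time defined through $\{{}^{\bc}\bY_n\}$; and the formulas defining ${}^{\bc}\!\hat{X}_n$, ${}^{\bc}\!\hat{\bZ}_n$ and $\hat{J}_n$ are term-for-term the same as those of Sect.~\ref{sec:QBDrepresentation} with $\bY_n$ replaced by ${}^{\bc}\bY_n$.

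First, I would verify that the standing hypotheses carry over from $\{\bY_n\}$ to $\{{}^{\bc}\bY_n\}$. Since $\{{}^{\bc}\bY_n\}$ is a bijective relabeling of $\{\bY_n\}$, its increments lie in $\{-1,0,1\}^d$ (a short case-check on whether ${}^{\bc}\!M_{i,n}$ rolls over gives increments of ${}^{\bc}\!X_{i,n}$ in $\{-1,0,1\}$), so it is a skip-free MMRW, and Assumptions \ref{as:P_irreducible}, \ref{as:Ass_boundary}, \ref{as:finiteness_tildeQ} and \ref{as:Q_irreducible} transfer to it directly. Hence Proposition \ref{pr:cp_hatR_upper1} applies and yields
\begin{equation*}
\log\cp({}^{\bc}\!\hat{R}) \;\le\; \sup\{\langle \bone,\btheta'\rangle;\ \cp({}^{\bc}\!A_*(\btheta'))>1,\ \btheta'\in\mathbb{R}^d\}.
\end{equation*}

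Second, I would invoke Proposition \ref{pr:QBDcp}, which asserts $\cp(A_*(\btheta))=\cp({}^{\bc}\!A_*(\bc\bullet\btheta))$. Because every $c_i$ is positive, the map $\btheta\mapsto \bc\bullet\btheta$ is a bijection on $\mathbb{R}^d$, so the set $\{\btheta'\in\mathbb{R}^d;\cp({}^{\bc}\!A_*(\btheta'))>1\}$ is exactly $\{\bc\bullet\btheta;\btheta\in\mathbb{R}^d,\cp(A_*(\btheta))>1\}$. Using $\langle\bone,\bc\bullet\btheta\rangle=\sum_{i=1}^d c_i\theta_i=\langle\bc,\btheta\rangle$, the supremum in the previous display transforms into $\sup\{\langle\bc,\btheta\rangle;\cp(A_*(\btheta))>1,\btheta\in\mathbb{R}^d\}$, which is the right-hand side of \eqref{eq:cp_hatcR_upper1}.

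I do not expect a substantial obstacle; the argument is essentially bookkeeping combining the already-established Proposition \ref{pr:cp_hatR_upper1} with the scaling identity in Proposition \ref{pr:QBDcp}. The one point requiring a little care is confirming that $\{{}^{\bc}\bY_n\}$ genuinely meets the skip-free framework (verified by the case analysis on $c_i-1$ overflow/underflow of ${}^{\bc}\!M_{i,n}$) so that the previous subsection's construction applies verbatim, and that the stopping rule defining the absorbing QBD is the same on both the original and the quotient-remainder representations.
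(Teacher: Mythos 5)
Your proposal is correct and follows essentially the same route as the paper: apply Proposition \ref{pr:cp_hatR_upper1} to the MMRW $\{{}^{\bc}\bY_n\}$, then use Proposition \ref{pr:QBDcp} with the change of variables $\btheta'=\bc\bullet\btheta$ (and $\langle\bone,\bc\bullet\btheta\rangle=\langle\bc,\btheta\rangle$) to rewrite the supremum. You add the useful (and correct) verification that $\{{}^{\bc}\bY_n\}$ inherits the skip-free MMRW structure and standing assumptions, which the paper leaves implicit.
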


\begin{proof}
By Propositions \ref{pr:cp_hatR_upper1} and \ref{pr:QBDcp}, we have
\begin{align*}
\log\cp({}^{\bc}\!\hat{R}) 
&\le \sup\{ \langle \bone,\btheta \rangle; \cp({}^{\bc}\!A_*(\btheta))>1,\ \btheta\in\mathbb{R}^d\} \cr
&= \sup\{ \langle \bone,\bc\bullet\btheta \rangle; \cp({}^{\bc}\!A_*(\bc\bullet\btheta))>1,\ \btheta\in\mathbb{R}^d\} \cr
&= \sup\{ \langle \bc,\btheta \rangle; \cp(A_*(\btheta))>1,\ \btheta\in\mathbb{R}^d\}.
\end{align*}
\end{proof}

%
%
%
\section{Asymptotic property of the occupation measures} \label{sec:asymptotic}

In this section, we derive the asymptotic decay rates of the occupation measures in the $d$-dimensional MMRW $\{\bY_n\}=\{(\bX_n,J_n)\}$. We also obtain the convergence domains of the matrix moment generating functions for the occupation measures.

\subsection{Asymptotic decay rate in an arbitrary direction}

Recall that, for $\bx\in\mathbb{Z}_+^d$, the convergence domain of the matrix moment generating function $\Phi_{\bx}(\btheta)$ is given as $\calD_{\bx} = \mbox{the interior of }\{\btheta\in\mathbb{R}^d : \Phi_{\bx}(\btheta)<\infty\}$. This domain does not depend on $\bx$, as follows. 
\begin{proposition} \label{pr:Dx=Dxp}
For every $\bx,\bx'\in\mathbb{Z}_+^d$, $\calD_{\bx}=\calD_{\bx'}$. 
\end{proposition}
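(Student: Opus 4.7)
The plan is to exploit the irreducibility of $P_+$ (Assumption \ref{as:Q_irreducible}) to show that the occupation measures $\tilde{q}_{(\bx,j),\cdot}$ and $\tilde{q}_{(\bx',j''),\cdot}$ are comparable uniformly in the target state, and then conclude that the matrix moment generating functions $\Phi_{\bx}$ and $\Phi_{\bx'}$ have the same finiteness set.

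First, I would fix $\by_1 = (\bx,j)$ and $\by_2 = (\bx',j'')$ in $\mathbb{S}_+$. By irreducibility of $P_+$, there exists $n_2 \ge 0$ such that $c_{12} := [P_+^{n_2}]_{\by_1,\by_2} > 0$. A standard Chapman--Kolmogorov bound gives, for every $\by'' \in \mathbb{S}_+$ and every $n \ge 0$,
\[
[P_+^{n_2+n}]_{\by_1,\by''} \ge [P_+^{n_2}]_{\by_1,\by_2}\,[P_+^n]_{\by_2,\by''}.
\]
Summing over $n \ge 0$ and noting that $\tilde{q}_{\by_1,\by''} \ge \sum_{n=0}^\infty [P_+^{n_2+n}]_{\by_1,\by''}$, I obtain the uniform comparison
\[
\tilde{q}_{\by_1,\by''} \ge c_{12}\, \tilde{q}_{\by_2,\by''}
\qquad\text{for every }\by'' \in \mathbb{S}_+,
\]
where crucially $c_{12}$ depends only on $\by_1$ and $\by_2$, not on $\by''$.

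Next, I would multiply this pointwise inequality by $e^{\langle \bx'',\btheta\rangle}$ (with $\by'' = (\bx'',j')$) and sum over $\bx'' \in \mathbb{Z}_+^d$ to get
\[
[\Phi_{\bx}(\btheta)]_{j,j'} \ge c_{12}\, [\Phi_{\bx'}(\btheta)]_{j'',j'},
\qquad j,j',j'' \in S_0,\ \btheta \in \mathbb{R}^d.
\]
Thus whenever $\Phi_{\bx}(\btheta) < \infty$ (element-wise), every entry of $\Phi_{\bx'}(\btheta)$ is also finite, because $c_{12} > 0$. Swapping the roles of $\bx$ and $\bx'$ yields the reverse implication. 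Therefore the sets $\{\btheta : \Phi_{\bx}(\btheta) < \infty\}$ and $\{\btheta : \Phi_{\bx'}(\btheta) < \infty\}$ coincide, and taking interiors gives $\calD_{\bx} = \calD_{\bx'}$.

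The only subtle point is ensuring the dominating constant $c_{12}$ does not depend on the summation variable $\bx''$; this is automatic from the Chapman--Kolmogorov splitting where the "bridge" $\by_1 \to \by_2$ is fixed and independent of the tail $\by_2 \to \by''$. No truncation or growth argument is required, since the irreducibility of $P_+$ directly supplies a positive constant valid for all $\by''$ simultaneously. The remainder is routine.
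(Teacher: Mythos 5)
Your proof is correct and follows essentially the same route as the paper: both exploit irreducibility of $P_+$ to splice the path at a fixed intermediate state (you via the Chapman--Kolmogorov inequality on $P_+^{n_2+n}$, the paper via the Markov property on the expectation), yielding a uniform multiplicative comparison $\tilde{q}_{\by_1,\cdot}\ge c_{12}\tilde{q}_{\by_2,\cdot}$ and hence $[\Phi_{\bx}(\btheta)]_{j,j'}\ge c_{12}[\Phi_{\bx'}(\btheta)]_{j'',j'}$. Working directly with powers of $P_+$ as you do is a slightly more explicit rendering of the paper's probabilistic argument, but the idea is the same.
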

\begin{proof}
For every $\bx,\bx'\in\mathbb{Z}_+^d$ and $j\in S_0$, since $P_+$ is irreducible, there exists $n_0\ge 0$ such that $\mathbb{P}(\bY_{n_0}=(\bx',j)\,|\,\bY_0=(\bx,j))>0$. Using this $n_0$, we obtain, for every $j'\in S_0$, 
\begin{align}
[\Phi_{\bx}(\btheta)]_{j,j'} 
&= \mathbb{E}\bigg( \sum_{n=0}^\infty e^{\langle \bX_n,\btheta \rangle}\, 1(J_n=j')\, 1(\tau>n) \,\Big|\, \bY_0=(\bx,j) \bigg) \cr
&\ge \mathbb{E}\bigg( \sum_{n=n_0}^\infty e^{\langle \bX_n,\btheta \rangle}\, 1(J_n=j')\, 1(\tau>n) \,\Big|\, \bY_{n_0}=(\bx',j) \bigg) \mathbb{P}(\bY_{n_0}=(\bx',j)\,|\,\bY_0=(\bx,j)) \cr
&= [\Phi_{\bx'}(\btheta)]_{j,j'}\, \mathbb{P}(\bY_{n_0}=(\bx',j)\,|\,\bY_0=(\bx,j)), 
\end{align}
where $\tau$ is the stopping time given as $\tau=\inf\{n\ge 0; \bY_n\in\mathbb{S}\setminus\mathbb{S}_+\}$. This implies $\calD_{\bx}\subset \calD_{\bx'}$. 
Exchanging $\bx$ with $\bx'$, we obtain $\calD_{\bx'}\subset \calD_{\bx}$, and this completes the proof. 
\end{proof}

A relation between the point sets $\Gamma$ and $\calD_x$ is given as follows.
\begin{proposition} \label{pr:domain_Gamma}
For every $\bx\in\mathbb{Z}_+^d$, $\Gamma\subset\calD_{\bx}$ and hence, $\calD\subset\calD_{\bx}$. 
\end{proposition}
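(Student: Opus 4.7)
The plan is to produce, for each $\btheta\in\Gamma$, an exponential Lyapunov function on $\mathbb{S}_+$ that makes $P_+$ a strict contraction in the sense $P_+\bw\le\rho\bw$ with $\rho<1$, and thereby to bound $\tilde P_+$ by a geometric series. Fix $\btheta\in\Gamma$, so that $\rho:=\cp(A_*(\btheta))^{-1}<1$. Since $A_*(\btheta)$ has the same zero pattern as $A_*$, it is irreducible by Assumption \ref{as:P_irreducible} and has finite iterates thanks to Assumption \ref{as:Ass_boundary}; the Pruitt-type argument already invoked in the proof of Proposition \ref{pr:Gmatrix_existence} then furnishes a strictly positive column vector $\bv$ on $S_0$ satisfying $A_*(\btheta)\bv\le\rho\,\bv$.

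Next, I would lift $\bv$ to the function $\bw(\bx,j)=e^{\langle\bx,\btheta\rangle}v_j$ on $\mathbb{Z}^d\times S_0$. A one-line calculation using space-homogeneity, the skip-free property and the definition of $A_*(\btheta)$ gives $[P\bw](\bx,j)=e^{\langle\bx,\btheta\rangle}[A_*(\btheta)\bv]_j\le\rho\,\bw(\bx,j)$, and since $\bw\ge\bzero$ on all of $\mathbb{S}$, restricting to $\mathbb{S}_+$ yields $P_+\bw\le\rho\,\bw$. Iterating, $P_+^n\bw\le\rho^n\bw$, whence
\[
\tilde P_+\bw=\sum_{n=0}^\infty P_+^n\bw\le\frac{1}{1-\rho}\bw<\infty.
\]
The $(\bx,j)$-entry of the left-hand side equals $[\Phi_{\bx}(\btheta)\bv]_j$, so $\Phi_{\bx}(\btheta)\bv$ is finite componentwise; strict positivity of $\bv$ then forces $\Phi_{\bx}(\btheta)<\infty$ entrywise. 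To pass from pointwise finiteness to membership in the \emph{interior} $\calD_{\bx}$, I would use that $\Gamma$ is open: by Proposition \ref{pr:chiconvex}, $\btheta\mapsto\cp(A_*(\btheta))^{-1}$ is log-convex hence continuous on $\mathbb{R}^d$, so $\Gamma=\{\cp(A_*(\cdot))^{-1}<1\}$ is an open subset of $\{\btheta:\Phi_{\bx}(\btheta)<\infty\}$ and therefore contained in its interior $\calD_{\bx}$. The inclusion $\calD\subset\calD_{\bx}$ is then immediate: if $\btheta<\btheta'$ componentwise with $\btheta'\in\Gamma$, monotonicity of $e^{\langle\bk,\cdot\rangle}$ in each coordinate for $\bk\in\mathbb{Z}_+^d$ gives $\Phi_{\bx}(\btheta)\le\Phi_{\bx}(\btheta')<\infty$, and $\calD$ is itself open as a union of the open sets $\{\btheta:\btheta<\btheta'\}$ over $\btheta'\in\Gamma$.

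\textbf{Main obstacle.} The delicate point is the application of Pruitt's theorem in the countably infinite phase setting to produce the strictly positive subinvariant column vector $\bv$; Assumption \ref{as:Ass_boundary} is exactly what supplies the regularity (finiteness of iterates, and hence the usual Perron-Frobenius machinery) needed to conclude existence of such a $\bv$ at the parameter $\rho=\cp(A_*(\btheta))^{-1}$. Once $\bv$ is in hand, the rest is a routine Foster-Lyapunov potential argument combined with the continuity of $\cp(A_*(\cdot))^{-1}$.
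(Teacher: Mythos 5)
Your proof is correct but takes a genuinely different route from the paper. The paper's argument is a one-step comparison: it observes that $\sum_{k\ge 0}A_*(\btheta)^k$ is, entrywise, exactly the \emph{unrestricted} expectation $\mathbb{E}\bigl(\sum_n e^{\langle\bX_n,\btheta\rangle}1(J_n=j')\,\big|\,\bY_0=(\bzero,j)\bigr)$, which dominates $[\Phi_{\bzero}(\btheta)]_{j,j'}$ (the latter merely inserts the extra indicator $1(\tau>n)$); since $\cp(A_*(\btheta))>1$ forces $\sum_k A_*(\btheta)^k<\infty$, the result for $\bx=\bzero$ is immediate, and Proposition \ref{pr:Dx=Dxp} then transfers it to all $\bx$. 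No Pruitt theorem, no subinvariant vector, no Lyapunov calculation. Your route instead builds a right subinvariant vector $\bv>0$ with $A_*(\btheta)\bv\le\rho\bv$ via Pruitt, lifts it to an exponential Lyapunov function, and deduces $\tilde P_+\bw\le(1-\rho)^{-1}\bw$. Both are sound; the paper's is shorter and avoids the Perron--Frobenius machinery altogether, while yours has two modest compensations: it handles all starting points $\bx$ simultaneously (so Proposition \ref{pr:Dx=Dxp} is not needed here), and it produces the quantitative geometric bound $\Phi_{\bx}(\btheta)\bv\le(1-\rho)^{-1}e^{\langle\bx,\btheta\rangle}\bv$ rather than mere finiteness. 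You are also more explicit than the paper about the interior step (openness of $\Gamma$ via continuity of the convex function $\cp(A_*(\cdot))^{-1}$) and about the monotonicity argument yielding the ``hence $\calD\subset\calD_{\bx}$'' clause, both of which the paper leaves implicit.

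One remark on the Pruitt step: you need a \emph{right} subinvariant vector $A_*(\btheta)\bv\le\rho\bv$, whereas the version the paper invokes in Proposition \ref{pr:Gmatrix_existence} is the left form $\bu^\top A_*(\theta^\dagger)\le\gamma^\dagger\bu^\top$. This is not a gap -- one simply applies Pruitt's Theorem 1 to $A_*(\btheta)^\top$, which is irreducible with the same convergence parameter -- but it deserves a half sentence of justification since the two versions are not literally the same statement.
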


\begin{proof}
If $\btheta\in\Gamma$, then $\cp(A_*(\btheta))>1$ and we have $ \sum_{k=0}^\infty A_*(\btheta)^k<\infty$. This leads us to that, for every $j,j'\in S_0$,  
\begin{align}
\infty>\bigg[ \sum_{k=0}^\infty A_*(\btheta)^k\bigg]_{j,j'} 
&= \mathbb{E}\bigg( \sum_{n=0}^\infty e^{\langle \bX_n,\btheta \rangle}\, 1(J_n=j') \,\Big|\, \bY_0=(\bzero,j) \bigg) \cr
&\ge \mathbb{E}\bigg( \sum_{n=0}^\infty e^{\langle \bX_n,\btheta \rangle}\, 1(J_n=j')\, 1(\tau>n) \,\Big|\, \bY_0=(\bzero,j) \bigg) \cr
&= [\Phi_{\bzero}(\btheta)]_{j,j'}, 
\end{align}
and we have $\Gamma\subset\calD_{\bzero}$. Hence, by Proposition \ref{pr:Dx=Dxp}, we obtain the desired result. 
\end{proof}

%
Using Lemmas \ref{le:limsup_tildeqnn} and \ref{le:cp_hatcR_upper1}, we obtain the asymptotic decay rates of the occupation measures, as follows. 
\begin{theorem} \label{th:asymptotic_any_direction}
For any positive vector $\bc=(c_1,c_2,...,c_d)\in\mathbb{Z}_+^d$, for every $\bx=(x_1,x_2,...,x_d)\in\mathbb{Z}_+^d$ such that $\min_{1\le i\le d} x_i=0$, for every $\bl=(l_1,l_2,...,l_d)\in\mathbb{Z}_+^d$ such that $\min_{1\le i\le d} l_i=0$ and for every $j,j'\in S_0$, 
\begin{align}
\lim_{k\to\infty} \frac{1}{k} \log \tilde{q}_{(\bx,j),(k\bc+\bl,j')} 
&= - \sup_{\btheta\in\Gamma}  \langle \bc, \btheta \rangle.
\label{eq:liminf_tildeq}
\end{align}
\end{theorem}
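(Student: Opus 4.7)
The plan is to sandwich the limit between matching upper and lower bounds. For the upper bound, Lemma \ref{le:limsup_tildeqnn} yields
\[
\limsup_{k\to\infty} \frac{1}{k}\log \tilde{q}_{(\bx,j),(k\bc+\bl,j')} \le -\sup_{\btheta\in\calD_{\bx}}\langle \bc,\btheta\rangle,
\]
and since Proposition \ref{pr:domain_Gamma} asserts $\Gamma\subset\calD_{\bx}$, we have $\sup_{\btheta\in\Gamma}\langle\bc,\btheta\rangle\le \sup_{\btheta\in\calD_{\bx}}\langle\bc,\btheta\rangle$, so the desired upper bound $\limsup\le -\sup_{\btheta\in\Gamma}\langle\bc,\btheta\rangle$ follows at once.

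The matching lower bound is where the QBD machinery of Section \ref{sec:QBDrepresentation} enters. I would work with the one-dimensional QBD representation $\{{}^{\bc}\hat{\bY}_n\}$ with level direction vector $\bc$. The hypotheses $\min_i x_i=0$ and $\min_i l_i=0$ guarantee $(\bx,j)\in{}^{\bc}\mathbb{L}_0$ and $(k\bc+\bl,j')\in{}^{\bc}\mathbb{L}_k$, so $\tilde{q}_{(\bx,j),(k\bc+\bl,j')}$ is a single entry of the level-$(0,k)$ fundamental-matrix block ${}^{\bc}\hat{N}_{0,k}$ of this QBD. Using the analogue of (\ref{eq:N0n_solution}), namely ${}^{\bc}\hat{N}_{0,k}={}^{\bc}\hat{N}_{0,0}({}^{\bc}\!\hat{R})^k$, together with nonnegativity of all entries, gives
\[
\tilde{q}_{(\bx,j),(k\bc+\bl,j')} \ge [{}^{\bc}\hat{N}_{0,0}]_{(\bx,j),(\bx,j)}\,\bigl[({}^{\bc}\!\hat{R})^k\bigr]_{(\bx,j),(k\bc+\bl,j')}.
\]
Corollary \ref{co:cpRlimit}, applied to the triplet defining ${}^{\bc}\!\hat{R}$, then yields $\lim_{k\to\infty}\frac{1}{k}\log\bigl[({}^{\bc}\!\hat{R})^k\bigr]_{(\bx,j),(k\bc+\bl,j')} = -\log\cp({}^{\bc}\!\hat{R})$, and Lemma \ref{le:cp_hatcR_upper1} bounds $\log\cp({}^{\bc}\!\hat{R})\le \sup_{\btheta\in\Gamma}\langle\bc,\btheta\rangle$, so
\[
\liminf_{k\to\infty}\frac{1}{k}\log\tilde{q}_{(\bx,j),(k\bc+\bl,j')}\ge -\sup_{\btheta\in\Gamma}\langle\bc,\btheta\rangle,
\]
closing the sandwich.

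The main technical obstacle is the positivity required to apply Corollary \ref{co:cpRlimit} in its equality form: one must verify that the row of ${}^{\bc}\!\hat{A}_1$ indexed by the QBD state corresponding to $(\bx,j)$ is not identically zero (so $[({}^{\bc}\!\hat{R})^k]_{(\bx,j),\cdot}>0$ and the limit formula applies), and that $[{}^{\bc}\hat{N}_{0,0}]_{(\bx,j),(\bx,j)}>0$. Both facts should follow from the irreducibility of $P_+$ (Assumption \ref{as:Q_irreducible}) combined with the skip-free structure, but if positivity fails along this particular diagonal pair one has a fallback: pass through an intermediate phase $\by_0'\in{}^{\bc}\mathbb{L}_0$ with $[{}^{\bc}\hat{N}_{0,0}]_{(\bx,j),\by_0'}>0$ for which the $\by_0'$-row of ${}^{\bc}\!\hat{A}_1$ is nonzero and $[({}^{\bc}\!\hat{R})^k]_{\by_0',(k\bc+\bl,j')}>0$, whose existence is again a consequence of irreducibility. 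Translating the irreducibility of $P_+$ into these QBD-level positivity statements through a careful case analysis of the auxiliary background coordinates ${}^{\bc}\!\hat{\bZ}_n$ and ${}^{\bc}\!\hat{\bM}_n$ introduced in the reduction is the principal technical burden of the proof.
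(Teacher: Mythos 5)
Your proof is correct and follows essentially the same route as the paper: upper bound via Lemma \ref{le:limsup_tildeqnn} together with $\Gamma\subset\calD_{\bx}$, and lower bound by passing to the QBD $\{{}^{\bc}\hat{\bY}_n\}$ and combining the matrix-geometric form ${}^{\bc}\hat{N}_{0,k}={}^{\bc}\hat{N}_{0,0}({}^{\bc}\!\hat{R})^k$ with Corollary \ref{co:cpRlimit} and Lemma \ref{le:cp_hatcR_upper1}. The one remark worth making is that what you call the ``fallback'' is in fact the paper's direct route: rather than first trying the diagonal pair $((\bx,j),(\bx,j))$ and worrying whether the corresponding row of ${}^{\bc}\!\hat{A}_1$ is nonzero, the paper immediately fixes a state $\bz''$ in level $0$ whose row of ${}^{\bc}\!\hat{A}_1$ is nonzero (so Corollary \ref{co:cpRlimit} gives $[({}^{\bc}\!\hat{R})^k]_{\bz'',\bz'}>0$ with the limit $e^{-\bar\theta}$ for every $\bz'$), and then uses $[{}^{\bc}\hat{N}_{0,0}]_{(\bx,j),\bz''}=\tilde{q}_{(\bx,j),\cdot}>0$, which follows at once from irreducibility of $P_+$ since every entry of $\tilde{P}_+$ is positive. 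This sidesteps the case analysis you flag and is the cleaner way to package the same argument.
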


\begin{proof}
By Lemma \ref{le:limsup_tildeqnn} and Proposition \ref{pr:domain_Gamma}, we have, for any positive vector $\bc\in\mathbb{Z}_+^d$ and for every $(\bx,j)\in\mathbb{S}_+$, $j'\in S_0$ and $\bl\in\mathbb{Z}_+$,  
\begin{equation}
 \limsup_{k\to\infty} \frac{1}{k} \log \tilde{q}_{(\bx,j),(k \bc+\bl,j')} 
 \le - \sup_{\btheta\in\calD_{\bx}} \langle \bc,\btheta \rangle
 \le - \sup_{\btheta\in\Gamma} \langle \bc,\btheta \rangle. 
\end{equation}
Hence, in order to prove the theorem, it suffices to give the lower bound. 

Consider the one-dimensional QBD process $\{{}^{\bc}\hat{\bY}_n\}$ defined in the previous section. Applying Corollary \ref{co:cpRlimit} to the rate matrix $\{{}^{\bc}\!\hat{R}\}$ of $\{{}^{\bc}\hat{\bY}_n\}$, we obtain, for some $\bz''=(i'',\bx'',\bm'',j'')\in\mathbb{N}_d\times\mathbb{Z}_+^{d-1}\times\prod_{k=1}^d \mathbb{Z}_{0,c_k-1}\times S_0$ and every $\bz'=(i',\bx',\bm',j')\in\mathbb{N}_d\times\mathbb{Z}_+^{d-1}\times\prod_{k=1}^d \mathbb{Z}_{0,c_k-1}\times S_0$, 
\begin{equation}
\lim_{k\to\infty} \left( [({}^{\bc}\!\hat{R})^k]_{\bz'',\bz'} \right)^{\frac{1}{k}} = \cp({}^{\bc}\!\hat{R})^{-1},
\label{eq:charR_limit}
\end{equation}
where $\bx'=(x_1',...,x_{d-1}'), \bx''=(x_1',...,x_{d-1}'')\in\mathbb{Z}_+^{d-1}$ and $\bm'=(m_1',...,m_d'), \bm''=(m_1'',...,m_d'')\in\prod_{k=1}^d \mathbb{Z}_{0,c_k-1}$. 
For $k\ge 0$, ${}^{\bc}\hat{\bY}_n=(k,i',\bx',\bm',j')$ corresponds to $\bY_n=(k\bc+\bc\bullet\hat{\bx}'+\bm',j')$, where $\hat{\bx}'=(x_1',...,x_{i'-1}',0,x_{i'}',...,x_{d-1}')$. Analogously, ${}^{\bc}\hat{\bY}_n=(0,i'',\bx'',\bm'',j'')$ corresponds to $\bY_n=(\bc\bullet\hat{\bx}'' +\bm'',j'')$, where $\hat{\bx}''=(x_1'',...,x_{i''-1}'',0,x_{i''}'',...,x_{d-1}'')$.
Hence, from \eqref{eq:N0n_solution}, setting $\bl=\bc\bullet\hat{\bx}' +\bm'$, we obtain, for every $\bx=(x_1,x_2,...,x_d)\in\mathbb{Z}_+^d$ such that $\min_{1\le i\le d} x_i=0$ and for every $j\in S_0$,
\begin{equation}
\tilde{q}_{(\bx,j),(k\bc+\bl,j')} \ge \tilde{q}_{(\bx,j),(\bc\bullet\hat{\bx}''+\bm'',j'')} [{}^{\bc}\!\hat{R}^k]_{\bz'',\bz'}. 
\label{eq:tildeq_under1}
\end{equation}
From  \eqref{eq:charR_limit}, \eqref{eq:tildeq_under1} and \eqref{eq:cp_hatcR_upper1}, setting $\bm'=\bzero$, we obtain 
\begin{equation}
 \liminf_{k\to\infty} \frac{1}{k} \log \tilde{q}_{(\bx,j),(k \bc+\bl,j')} 
 \ge - \log\cp({}^{\bc}\!\hat{R}) 
 \ge - \sup_{\btheta\in\Gamma} \langle \bc,\btheta \rangle,  
\end{equation}
and this completes the proof. 
\end{proof}

%
\begin{corollary} \label{co:asymptotic_any_direction}
The same result as Theorem \ref{th:asymptotic_any_direction} holds for every direction vector $\bc\in\mathbb{Z}_+^d$ such that $\bc\ne\bzero$. 
\end{corollary}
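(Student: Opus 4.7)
The upper bound $\limsup_{k\to\infty}\tfrac{1}{k}\log\tilde{q}_{(\bx,j),(k\bc+\bl,j')}\le-\sup_{\btheta\in\Gamma}\langle\bc,\btheta\rangle$ is immediate from Lemma \ref{le:limsup_tildeqnn} together with $\Gamma\subset\calD_{\bx}$ (Proposition \ref{pr:domain_Gamma}). Since the strictly positive case is Theorem \ref{th:asymptotic_any_direction} itself, the only remaining task is the matching lower bound when $\bc\in\mathbb{Z}_+^d$ has at least one zero coordinate. The plan is to approximate $\bc$ by the strictly positive integer vectors $\bc_n:=n\bc+\bone$, apply Theorem \ref{th:asymptotic_any_direction} to each $\bc_n$, and then pass to the limit $n\to\infty$, using a Chapman--Kolmogorov bridging step to move the asymptotic from positions of the form $k\bc_n$ onto the intended positions $m\bc+\bl$.

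For the bridging, I fix, once and for all, a phase $j_0\in S_0$ and a finite path $\pi_0$ of some length $N_0$ that sends $(\bone,j_0)$ to $(\bzero,j_0)$ with positive probability $q>0$ (this exists by Assumption \ref{as:Q_irreducible} and the space-homogeneity of the MMRW). Setting $k=\lfloor m/n\rfloor$, the point $k\bc_n$ sits at Chebyshev distance $m/n+O(n)$ from $m\bc+\bl$; I then concatenate $k-N_0$ space-shifted copies of $\pi_0$ followed by a fixed adjustment segment of length $O(n)$ which absorbs the residue $m-kn$ and the phase mismatch $j_0\to j'$, producing a path that stays entirely in $\mathbb{S}_+$ (each coordinate of $k\bc_n$ is at least $k$, comfortably dominating $N_0$ once $m$ is large) and has probability at least $C_n\,q^{m/n}$, with $q$ and $N_0$ independent of $n$. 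The elementary inequality $\tilde{q}_{\by,\by'}\ge\tilde{q}_{\by,\by''}[P_+^{n_1}]_{\by'',\by'}$ together with Theorem \ref{th:asymptotic_any_direction} applied to $\bc_n$ then gives, after dividing by $m$ and letting $m\to\infty$ (the $O(n)$-long adjustment contributes only $O(n/m)\to0$ in the per-$m$ rate),
$$\liminf_{m\to\infty}\frac{1}{m}\log\tilde{q}_{(\bx,j),(m\bc+\bl,j')}\ \ge\ -\frac{1}{n}\sup_{\btheta\in\Gamma}\langle\bc_n,\btheta\rangle+\frac{\log q}{n}.$$

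I then send $n\to\infty$. Because $\bar{\Gamma}$ is bounded (Proposition \ref{pr:barGamma_bounded}), $|\langle\bone,\btheta\rangle|$ is uniformly bounded on $\Gamma$, and hence $\tfrac{1}{n}\sup_{\btheta\in\Gamma}\langle n\bc+\bone,\btheta\rangle\to\sup_{\btheta\in\Gamma}\langle\bc,\btheta\rangle$, while $(\log q)/n\to0$; this matches the upper bound and completes the proof. The main obstacle I expect is ensuring the bridging probability bound $C_n q^{m/n}$ is \emph{uniform in $n$} in the exponent: this is precisely why $\pi_0$, $q$, and $N_0$ must be committed to \emph{before} the approximation $\bc_n$ is introduced, so that the iterated double limit (first $m\to\infty$ with $n$ fixed, then $n\to\infty$) actually closes rather than leaving behind an error term that grows with $n$.
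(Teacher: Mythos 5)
Your proof is correct, and it takes a genuinely different route from the paper's. The paper proves the corollary by \emph{dimensional reduction}: assuming without loss of generality that $c_i>0$ for $i\le m$ and $c_i=0$ for $i>m$, it repackages the coordinates $X_{m+1},\dots,X_d$ into the background state, obtains an $m$-dimensional MMRW $\{\hat{\bY}^{(m)}_n\}$ whose direction vector $\bc_{(m)}$ is now strictly positive, applies Theorem~\ref{th:asymptotic_any_direction} to this reduced chain, and then translates the answer back using the identity $\cp(A_*^{(m)}(\btheta_{(m)}))=\sup_{\btheta_{[m+1]}}\cp(A_*(\btheta_{(m)},\btheta_{[m+1]}))$, itself an application of the block-tridiagonal convergence-parameter formula of Lemma~\ref{le:Q_cp}. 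You instead keep the dimension fixed and approximate the direction: $\bc_n:=n\bc+\bone$ is strictly positive for every $n$, Theorem~\ref{th:asymptotic_any_direction} applies to each $\bc_n$, and a Chapman--Kolmogorov splice of $\approx m/n$ shifted copies of a fixed positive-probability loop $\pi_0$ plus an $O(n)$-length adjustment carries the estimate from lattice points $k\bc_n$ to the target points $m\bc+\bl$; since $\bar\Gamma$ is bounded (Proposition~\ref{pr:barGamma_bounded}), $\tfrac1n\sup_\Gamma\langle\bc_n,\btheta\rangle\to\sup_\Gamma\langle\bc,\btheta\rangle$ and the $\tfrac{\log q}{n}$ error vanishes. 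What the two approaches buy: the paper's reduction is clean and structural but leans on the extra fact relating $\cp(A_*^{(m)})$ to $\cp(A_*)$ through Lemma~\ref{le:Q_cp}; your approach treats Theorem~\ref{th:asymptotic_any_direction} as a black box and needs only irreducibility of $P_+$, space-homogeneity, and boundedness of $\bar\Gamma$, at the price of a somewhat more delicate path-splicing argument (commit $\pi_0$, $q$, $N_0$ before choosing $n$; observe that the residual displacement depends only on $m\bmod n$ so the adjustment constant $C_n$ is a minimum over finitely many positive numbers; shift by $+kn\bc\ge\bzero$ to keep the splice inside $\mathbb{S}_+$). Both are valid; your argument is arguably more portable since it does not exploit the block structure of the reduced $A_*^{(m)}$.
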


\begin{proof}
Let $\{\bY_n\}=\{(\bX_n,J_n)\}$ be a $d$-dimensional MMRW on the state space $\mathbb{Z}^d\times S_0$ and define an absorbing Markov chain $\{\hat{\bY}_n\}=\{(\hat{\bX}_n,\hat{J}_n)\}$ as $\hat{\bY}_n=\bY_{\tau\wedge n}$ for $n\ge 0$, where $\tau$ is the stopping time given as $\tau=\inf\{n\ge 0; \bY_n\in\mathbb{S}\setminus\mathbb{S}_+\}$. We assume that the state space of $\{\hat{\bY}_n\}$ is given by $\mathbb{S}_+$. 
If $d=1$, the assertion of the corollary is trivial. Hence, we assume $d\ge 2$ and set $m$ in $\{1,2,...,d-1\}$. 
Without loss of generality, we assume the direction vector $\bc=(c_1,c_2,...,c_d)$ satisfies $c_i>0$ for $i\in\{1,2,...,m\}$ and $c_i=0$ for $i\in\{m+1,m+2,...,d\}$. 
Consider an $m$-dimensional MMRW $\{\hat{\bY}_n^{(m)}\}=\{(\hat{X}_1,...,\hat{X}_m,(\hat{X}_{m+1},...,\hat{X}_d,\hat{J}_n))\}$, where $(\hat{X}_1,...,\hat{X}_m)$ is the level and $(\hat{X}_{m+1},...,\hat{X}_d,\hat{J}_n)$ the background state, and denote by $A_{\bi}^{(m)},\,\bi\in\{-1,0,1\}^m$, its transition probability blocks.  
For $\btheta_{(m)}=(\theta_1,...,\theta_m)\in\mathbb{R}^m$, define a matrix function $A_*^{(m)}(\btheta_{(m)})$ as 
\[
A_*^{(m)}(\btheta_{(m)}) = \sum_{\bi\in\{-1,0,1\}^m} e^{\langle \bi,\btheta_{(m)} \rangle} A_{\bi}^{(m)}. 
\]
Since $\{\bY_n\}$ is a MMRW, this $A_*^{(m)}(\btheta_{(m)})$ has a multiple tri-diagonal structure and, applying Lemma \ref{le:Q_cp} repeatedly, we obtain 
\begin{equation}
\cp(A_*^{(m)}(\btheta_{(m)})) = \sup_{\btheta_{[m+1]}\in\mathbb{R}^{d-m}} \cp(A_*(\btheta_{(m)},\btheta_{[m+1]})),
\end{equation}
where $\btheta_{[m+1]}=(\theta_{m+1},...,\theta_d)$ and $A_*(\btheta)=A_*(\btheta_{(m)},\btheta_{[m+1]})$ is given by \eqref{eq:As}. 
Hence, applying Theorem \ref{th:asymptotic_any_direction} to $\{\hat{\bY}_n^{(m)}\}$, we obtain, for every $\bx_{(m)}=(x_1,...,x_m)\in\mathbb{Z}_+^m$ such that $\min_{1\le i\le m} x_i=0$, for every $\bx_{[m+1]}=(x_{m+1},...,x_d)\in\mathbb{Z}_+^{d-m}$,  for every $\bl_{(m)}=(l_1,...,l_m)\in\mathbb{Z}_+^m$ such that $\min_{1\le i\le m} l_i=0$ and  for every $\bl_{[m+1]}=(l_{[m+1]},...,l_d)\in\mathbb{Z}_+^{d-m}$, 
\begin{align}
&\quad \lim_{k\to\infty} \frac{1}{k} \log \tilde{q}_{(\bx_{(m)},\bx_{[m+1]},j),(k\bc_{(m)}+\bl_{(m)},\bl_{[m+1]},j')} \cr
&= -\sup\{ \langle \bc_{(m)},\btheta_{(m)}\rangle; \cp(A_*^{(m)}(\btheta_{(m)})) > 1,\, \btheta_{(m)}\in\mathbb{R}^m \} \cr
&= -\sup\Big\{ \langle \bc_{(m)},\btheta_{(m)} \rangle; \sup_{\btheta_{[m+1]}\in\mathbb{R}^{d-m}} \cp(A_*(\btheta_{(m)},\btheta_{[m+1]})) > 1,\, \btheta_{(m)}\in\mathbb{R}^m \Big\} \cr
&= - \sup_{\btheta\in\Gamma}  \langle \bc, \btheta \rangle, 
\end{align}
where $\bc_{(m)}=(c_1,...,c_m)$ and we use the assumption that $(c_{m+1},...,c_d)=\bzero$. 
\end{proof}

%
%
\subsection{Convergence domains of the matrix moment generating functions}

%
From Proposition \ref{pr:domain_Gamma} and Theorem \ref{th:asymptotic_any_direction}, we obtain the following result for the convergence domains.
\begin{theorem} \label{th:domain_Phix}
For every $\bx\in\mathbb{Z}_+^d$, $\calD_{\bx}=\calD$.
\end{theorem}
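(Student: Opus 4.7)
The inclusion $\calD\subset\calD_{\bx}$ is already contained in Proposition \ref{pr:domain_Gamma}, so the plan is to establish the reverse inclusion $\calD_{\bx}\subset\calD$. By Proposition \ref{pr:Dx=Dxp} it suffices to treat $\bx=\bzero$. Fix $\btheta\in\calD_{\bzero}$; the goal will be to exhibit $\btheta^{**}\in\Gamma$ with $\btheta^{**}>\btheta$ componentwise. Since $\calD_{\bzero}$ is open, I will pick $\epsilon>0$ so small that $\btheta':=\btheta+\epsilon\bone\in\calD_{\bzero}$, which gives $\Phi_{\bzero}(\btheta')<\infty$ together with a strict coordinatewise gap over $\btheta$ that I can later exploit.

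For any nonzero $\bc\in\mathbb{Z}_+^d$ and any $j,j'\in S_0$, the scalar series $\sum_{k\ge 0}e^{k\langle \bc,\btheta'\rangle}[N_{\bzero,k\bc}]_{j,j'}$ is a subsum of $\Phi_{\bzero}(\btheta')$, hence converges, and its general term tends to $0$. Taking $(1/k)\log$ and applying Corollary \ref{co:asymptotic_any_direction} forces
\[
\langle \bc,\btheta'\rangle\;\le\;\sup_{\btheta''\in\Gamma}\langle \bc,\btheta''\rangle.
\]
Positive homogeneity of both sides in $\bc$ extends this to every $\bc\in\mathbb{Q}_{\ge 0}^d\setminus\{\bzero\}$, and continuity of $\bc\mapsto\sup_{\btheta''\in\bar{\Gamma}}\langle \bc,\btheta''\rangle$ on $\mathbb{R}_{\ge 0}^d$ (valid because $\bar{\Gamma}$ is bounded by Proposition \ref{pr:barGamma_bounded}, and equal to $\sup_{\btheta''\in\Gamma}$ by density of $\Gamma$ in $\bar{\Gamma}$) pushes it to every $\bc\in\mathbb{R}_{\ge 0}^d\setminus\{\bzero\}$. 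This is precisely the support-function characterization of the lower shadow $\bar{\Gamma}-\mathbb{R}_{\ge 0}^d$ of the bounded closed convex set $\bar{\Gamma}$, yielding some $\btheta^*\in\bar{\Gamma}$ with $\btheta^*\ge\btheta'=\btheta+\epsilon\bone$.

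Consequently each coordinate satisfies $\theta^*_i-\theta_i\ge\epsilon>0$. If $\btheta^*\in\Gamma$ I simply take $\btheta^{**}=\btheta^*$; otherwise $\btheta^*\in\bar{\Gamma}\setminus\Gamma$, and since $\cp(A_*(\cdot))^{-1}$ is continuous and convex (Proposition \ref{pr:chiconvex}) while its strict sublevel set $\Gamma$ is nonempty and open, the standard convex-analysis fact that the interior of a convex sublevel set is the corresponding strict sublevel set makes $\Gamma$ dense in $\bar{\Gamma}$; any sequence $\btheta^*_n\in\Gamma$ with $\btheta^*_n\to\btheta^*$ eventually satisfies $\|\btheta^*_n-\btheta^*\|_\infty<\epsilon/2$, hence $\btheta^*_n>\btheta$ componentwise, and $\btheta^{**}:=\btheta^*_n$ closes the argument. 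The main obstacle I foresee is the convex-duality step in the middle paragraph, which collapses a countable family of linear inequalities indexed by $\bc\in\mathbb{Z}_+^d\setminus\{\bzero\}$ into the single containment $\btheta'\le\btheta^*\in\bar{\Gamma}$; this step silently uses both the rational-to-real extension (continuity of the support function on a bounded set) and the identity $\sup_{\Gamma}=\sup_{\bar{\Gamma}}$, each of which requires $\Gamma\ne\emptyset$, a fact I will read off from the negative-drift Assumption \ref{as:finiteness_tildeQ} via a first-order perturbation of $\cp(A_*(\cdot))^{-1}$ at $\btheta=\bzero$.
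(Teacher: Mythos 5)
Your proof is correct, and it is essentially the paper's argument run forwards rather than by contradiction. Both proofs rest on the same two pillars: the decay-rate identity $\lim_k\frac{1}{k}\log\tilde{q}_{(\bzero,j),(k\bc,j')}=-\sup_{\Gamma}\langle\bc,\cdot\rangle$ from Theorem \ref{th:asymptotic_any_direction}/Corollary \ref{co:asymptotic_any_direction}, and the separation theorem for bounded closed convex sets. The paper supposes $\calD_{\bzero}\setminus\calD\neq\emptyset$, picks $\bq\in\calD_{\bzero}\setminus\bar{\calD}$, produces a separating hyperplane with normal $\bc\ge\bzero$, rounds to an integer direction $\bc'$, and then applies the Cauchy--Hadamard theorem to the single power series $\varphi_{\bc'}$ to contradict $\Phi_{\bzero}(\bq)<\infty$. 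You instead collect, for \emph{every} direction $\bc\in\mathbb{Z}_+^d\setminus\{\bzero\}$, the linear inequality $\langle\bc,\btheta'\rangle\le\sup_{\Gamma}\langle\bc,\cdot\rangle$ (using only that the general term of a convergent series vanishes, rather than Cauchy--Hadamard), extend by homogeneity and continuity to all $\bc\ge\bzero$, and invoke the support-function characterization of the downward set $\bar{\Gamma}-\mathbb{R}_{\ge0}^d$ to land $\btheta'$ inside it; the extra $\epsilon\bone$ margin plus density of $\Gamma$ in $\bar{\Gamma}$ then yields a point of $\Gamma$ strictly dominating $\btheta$. The two formulations are contrapositives of one another, so neither is genuinely simpler, though yours avoids the Cauchy--Hadamard quotation and the explicit rounding of $\bc$ to $\bc'$.

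One loose thread you rightly flag but leave gestural: nonemptiness of $\Gamma$, needed both for the identity $\sup_{\Gamma}=\sup_{\bar{\Gamma}}$ and for the final perturbation into $\Gamma$. The ``first-order perturbation of $\chi$ at $\bzero$'' you propose presumes $\chi(\bzero)=1$ and a usable derivative of $\chi$, neither of which is free in the countable-phase setting. The paper silently makes the same assumption (its separating-hyperplane step with $\bc\ge\bzero$ already requires $\bar{\calD}\neq\emptyset$), so you are not worse off, but if you want to close the thread cleanly you should either elevate $\Gamma\neq\emptyset$ to a standing hypothesis or supply a self-contained argument, since the direct route via $\Phi_{\bzero}(\btheta)<\infty$ for $\btheta<\bzero$ only gives $\calD_{\bzero}\neq\emptyset$, which cannot be converted to $\Gamma\neq\emptyset$ without circularly invoking the theorem you are proving.
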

\begin{figure}[htbp]
\begin{center}
\includegraphics[width=15cm,trim=20 290 20 20]{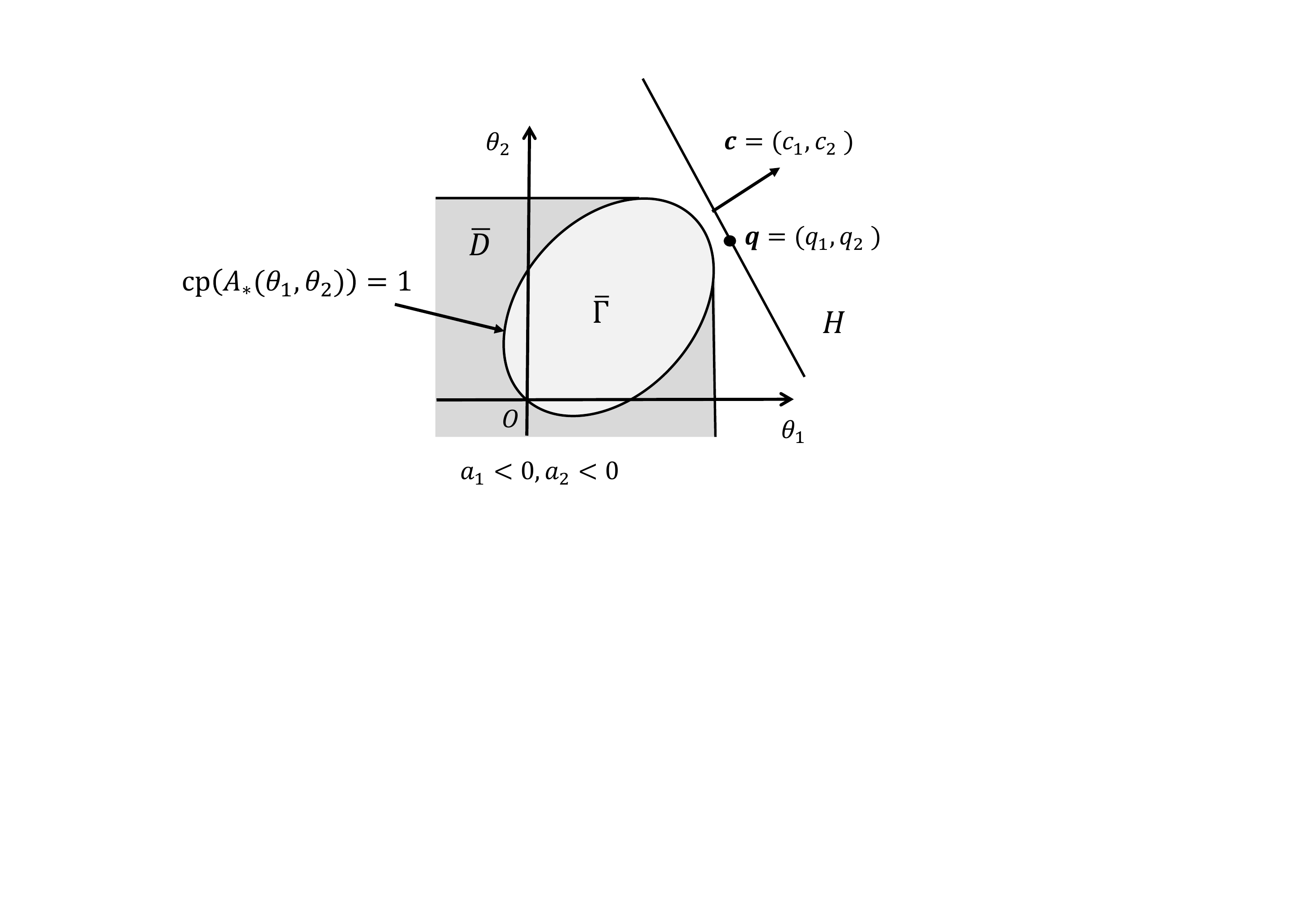} 
\caption{Convergence domain of $\Phi_{\bzero}(\btheta)$ ($d=2$)}
\label{fig:conv_domain_2}
\end{center}
\end{figure}
\begin{proof}
We prove $\calD_{\bzero}=\calD$. By Proposition \ref{pr:Dx=Dxp}, this implies $\calD_{\bx}=\calD$ for every $\bx\in\mathbb{Z}_+^d$. 
Suppose $\calD_{\bzero} \setminus\calD\ne\emptyset$. Since $\calD_{\bzero}$ is an open set and, by Proposition \ref{pr:domain_Gamma}, we have $\calD\subset\calD_{\bzero}$, there exists a point $\bq \in \calD_{\bzero}\setminus\bar{\calD}$, where $\bar{\calD}$ is the closure of $\calD$. This $\bq$ satisfies $\Phi_{\bzero}(\bq)<\infty$. 
Since $\bar{\calD}$ is a convex set, there exists a hyperplane $\mathscr{H}$ satisfying $\bq\in\mathscr{H}$ and $\bar{\calD}\cap\mathscr{H}=\emptyset$. Denote by $\bc\ge \bzero$ the normal vector of $\mathscr{H}$, where we assume $\| \bc \|=1$. By the definition, $\bc$ satisfies 
\begin{equation}
\langle \bc,\bq \rangle > \sup_{\btheta\in\calD} \langle \bc,\btheta \rangle.
\label{eq:cq_inequlity}
\end{equation}
Let $\bc'$ be a vector of positive integers satisfying 
\begin{equation}
\langle \|\bc'\|\bc,\bq \rangle > \langle \bc',\bq \rangle > \sup_{\btheta\in\calD} \langle \bc',\btheta \rangle. 
\label{eq:cpq_inequlity}
\end{equation}
It is possible because of \eqref{eq:cq_inequlity} and of the fact that $\bar{\calD}$ is bounded in any positive direction. For this $\bc'$ and for $j,j'\in S_0$, define a moment generating function $\varphi_{\bc'}(\btheta)$ as 
\begin{equation}
\varphi_{\bc'}(\btheta) = \sum_{k=0}^\infty e^{\langle \bc',\btheta \rangle k}\, \tilde{q}_{(\bzero,j),(k \bc',j')}
\label{eq:varphic}
\end{equation}
and a point ${}^{\bc'}\!\btheta$ as ${}^{\bc'}\!\btheta = \arg\max_{\btheta\in\bar{\calD}}\, \langle \bc',\btheta \rangle$. 
By Theorem \ref{th:asymptotic_any_direction} and the Cauchy-Hadamard theorem, we see that the radius of convergence of the power series in the right hand side of \eqref{eq:varphic} is $e^{\langle \bc',{}^{\bc'}\!\btheta \rangle}$ and this implies that $\varphi_{\bc}(\btheta)$ diverges if $\langle \bc',\btheta \rangle > \langle \bc',{}^{\bc'}\!\btheta \rangle$. Hence, by \eqref{eq:cpq_inequlity}, we have $\varphi_{\bc'}(\bq)=\infty$. 
On the other hand, we obtain from the definition of $\varphi_{\bc}(\btheta)$ that 
\begin{equation}
\varphi_{\bc'}(\bq) \le \sum_{\bk\in\mathbb{Z}_+^d} e^{\langle \bk,\bq \rangle}\, \tilde{q}_{(\bzero,j),(\bk,j')} 
= [\Phi_{\bzero}(\bq)]_{j,j'} < \infty.
\label{eq:varphic_Phi}
\end{equation}
This is a contradiction and, as a result, we obtain $\calD_{\bzero}\setminus\calD=\emptyset$. 
\end{proof}

%
%
\subsection{Asymptotic decay rates of marginal measures}

Let $\bX$ be a vector of random variables subject to the stationary distribution of  a multi-dimensional reflected random walk. The asymptotic decay rate of the marginal tail distribution in a form $\mathbb{P}(\langle \bc,\bX \rangle>x)$ has been discussed in \cite{Miyazawa11} (also see \cite{Kobayashi14}), where $\bc$ is a direction vector. 
In this subsection, we consider this type of asymptotic decay rate for the occupation measures. 

Let $\bc=(c_1,c_2,...,c_d)$ be a vector of mutually prime positive integers. We assume $c_1=\min_{1\le i \le d} c_i$; in other cases, analogous results can be obtained. For $k\ge 0$, define an index set $\scrI_k$ as
\[
\scrI_k=\{ \bl_{[2]}=(l_2,l_3,...,l_d)\in\mathbb{Z}_+^{d-1}; \langle \bc,(l_1,\bl_{[2]}) \rangle = c_1 k\ \mbox{for some}\ l_1\in\mathbb{Z}_+ \}. 
\]
For $\bx\in\mathbb{Z}_+^d$, the matrix moment generating function $\Phi_{\bx}(\theta \bc)$ is represented as 
\begin{equation}
\Phi_{\bx}(\theta \bc) = \sum_{k=0}^\infty e^{k c_1\theta}  \sum_{\bl_{[2]}\in\scrI_n} N_{\bx,(k-\langle \bc_{[2]},\bl_{[2]} \rangle/c_1,\bl_{[2]})},  
\end{equation}
where $\bc_{[2]}=(c_2,c_3,...,c_d)$. By the Cauchy-Hadamard theorem, we obtain the following result.
\begin{theorem} \label{th:asymptotic_marginal}
For any vector of mutually prime positive integers, $\bc=(c_1,c_2,...,c_d)$, such that $c_1=\min_{1\le i\le d} c_i$ and for every $(\bx,j)\in\mathbb{S}_+$ and $j'\in S_0$, 
\begin{align}
& \limsup_{k\to\infty} \frac{1}{k} \log  \sum_{\bl_{[2]}\in\scrI_k} \tilde{q}_{(\bx,j),(k-\langle \bc_{[2]},\bl_{[2]} \rangle/c_1,\bl_{[2]},j')} 
= - \sup_{\theta \bc\in\Gamma} c_1 \theta.
\end{align}
In other cases, e.g. $c_2=\min_{1\le i\le d} c_i$, an analogous result holds. 
\end{theorem}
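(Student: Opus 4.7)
The proof applies the Cauchy--Hadamard theorem to the power series displayed just before the theorem, with $z = e^{c_1 \theta}$ as the variable, and uses Theorem \ref{th:domain_Phix} to locate the radius of convergence. Fix $j, j' \in S_0$ and let $s_k$ denote the nonnegative quantity on the left-hand side of the asserted equality. Reading the $(j, j')$-entry of the displayed identity gives the power-series representation
\[
[\Phi_{\bx}(\theta \bc)]_{j,j'} \;=\; \sum_{k=0}^\infty z^k\, s_k, \qquad z = e^{c_1 \theta},
\]
a power series in $z$ with nonnegative coefficients $s_k$.

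By Theorem \ref{th:domain_Phix}, $\calD_{\bx} = \calD$, so $[\Phi_{\bx}(\theta \bc)]_{j,j'}$ is finite exactly when $\theta \bc \in \calD$ and infinite once $\theta \bc$ leaves $\bar{\calD}$. Since $\bc$ is componentwise positive, $\bar{\Gamma}$ is bounded and convex (Propositions \ref{pr:chiconvex} and \ref{pr:barGamma_bounded}), and $\bzero \in \bar{\Gamma}$ (because $A_*$ is stochastic, so $\cp(A_*) = 1$), a brief convex-analytic argument shows that along the positive ray $\theta \mapsto \theta \bc$ the convergence threshold of $[\Phi_{\bx}(\theta \bc)]_{j,j'}$ coincides with $\theta^* := \sup\{\theta : \theta \bc \in \Gamma\}$. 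The radius of convergence in $z$ is therefore $e^{c_1 \theta^*}$, and the Cauchy--Hadamard formula yields
\[
\limsup_{k \to \infty} \frac{1}{k} \log s_k \;=\; -c_1 \theta^* \;=\; -\sup_{\theta \bc \in \Gamma} c_1 \theta,
\]
which is the desired equality. The analogous cases, e.g.\ $c_2 = \min_i c_i$, follow by permuting coordinates of $\bX_n$ and reapplying the argument.

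The main obstacle is the convex-analytic identification of the convergence threshold with $\theta^*$. The inequality $\sup\{\theta : \theta \bc \in \calD\} \ge \sup\{\theta : \theta \bc \in \Gamma\}$ is immediate from $\Gamma \subset \calD$. For the reverse, if $\theta \bc \in \calD$ one picks $\btheta' \in \Gamma$ with $\btheta' > \theta \bc$ componentwise and sets $\theta'' := \min_i \btheta'_i/c_i > \theta$; then $\theta'' \bc \le \btheta'$, and one deduces $\theta'' \bc \in \bar{\Gamma}$ using convexity of $\bar{\Gamma}$ together with $\bzero \in \bar{\Gamma}$ and $\bc > 0$. This is the only step requiring more than a mechanical application of Cauchy--Hadamard; everything else is bookkeeping.
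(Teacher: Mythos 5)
Your overall strategy matches the paper exactly: the statement follows by applying the Cauchy--Hadamard theorem to the power series $[\Phi_{\bx}(\theta\bc)]_{j,j'}=\sum_{k\ge 0} z^k s_k$, $z=e^{c_1\theta}$, displayed just before the theorem, and then using Theorem~\ref{th:domain_Phix} to pin down the radius of convergence. The paper offers no detail beyond ``By the Cauchy--Hadamard theorem,'' so you are right to flag the identification of the critical $\theta$ as the one genuinely non-mechanical step.

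Unfortunately, your argument for that step does not go through. What Cauchy--Hadamard plus $\calD_{\bx}=\calD$ actually give is that the critical threshold equals $\sup\{\theta:\theta\bc\in\calD\}=\sup_{\btheta'\in\Gamma}\min_{1\le i\le d}\theta'_i/c_i$, and you then try to show this equals $\theta^*=\sup\{\theta:\theta\bc\in\Gamma\}$. The deduction ``$\theta''\bc\le\btheta'$, $\btheta'\in\Gamma$, $\bzero\in\bar\Gamma$, $\bar\Gamma$ convex, $\bc>0$ $\Rightarrow$ $\theta''\bc\in\bar\Gamma$'' is false: convexity of $\bar\Gamma$ guarantees only that the segment $[\bzero,\btheta']$ lies in $\bar\Gamma$, and $\theta''\bc$ lies on that segment only in the degenerate case where all ratios $\theta'_i/c_i$ are equal (i.e.\ $\btheta'$ is itself on the ray). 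In general $\theta''\bc$ lies strictly below the segment and need not belong to $\bar\Gamma$. Indeed, for a bounded convex $\bar\Gamma$ with $\bzero$ on its boundary, $\sup_{\btheta'\in\Gamma}\min_i\theta'_i/c_i$ can strictly exceed $\sup\{\theta:\theta\bc\in\Gamma\}$ --- this happens precisely when the outward normal $\ba$ to $\bar\Gamma$ at $\bzero$ satisfies $\langle\ba,\bc\rangle\ge 0$ while some $a_i<0$, so the ray $\theta\bc$ exits $\bar\Gamma$ immediately at the origin yet $\Gamma$ still protrudes into the open positive orthant. So your reverse inequality is not a consequence of the stated convexity facts alone, and the step needs a different argument (or a re-examination of whether the value that Cauchy--Hadamard produces is really $\sup\{\theta:\theta\bc\in\Gamma\}$ rather than $\sup\{\theta:\theta\bc\in\calD\}$).
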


\begin{figure}[htbp]
\begin{center}
\includegraphics[width=11cm,trim=200 420 200 20]{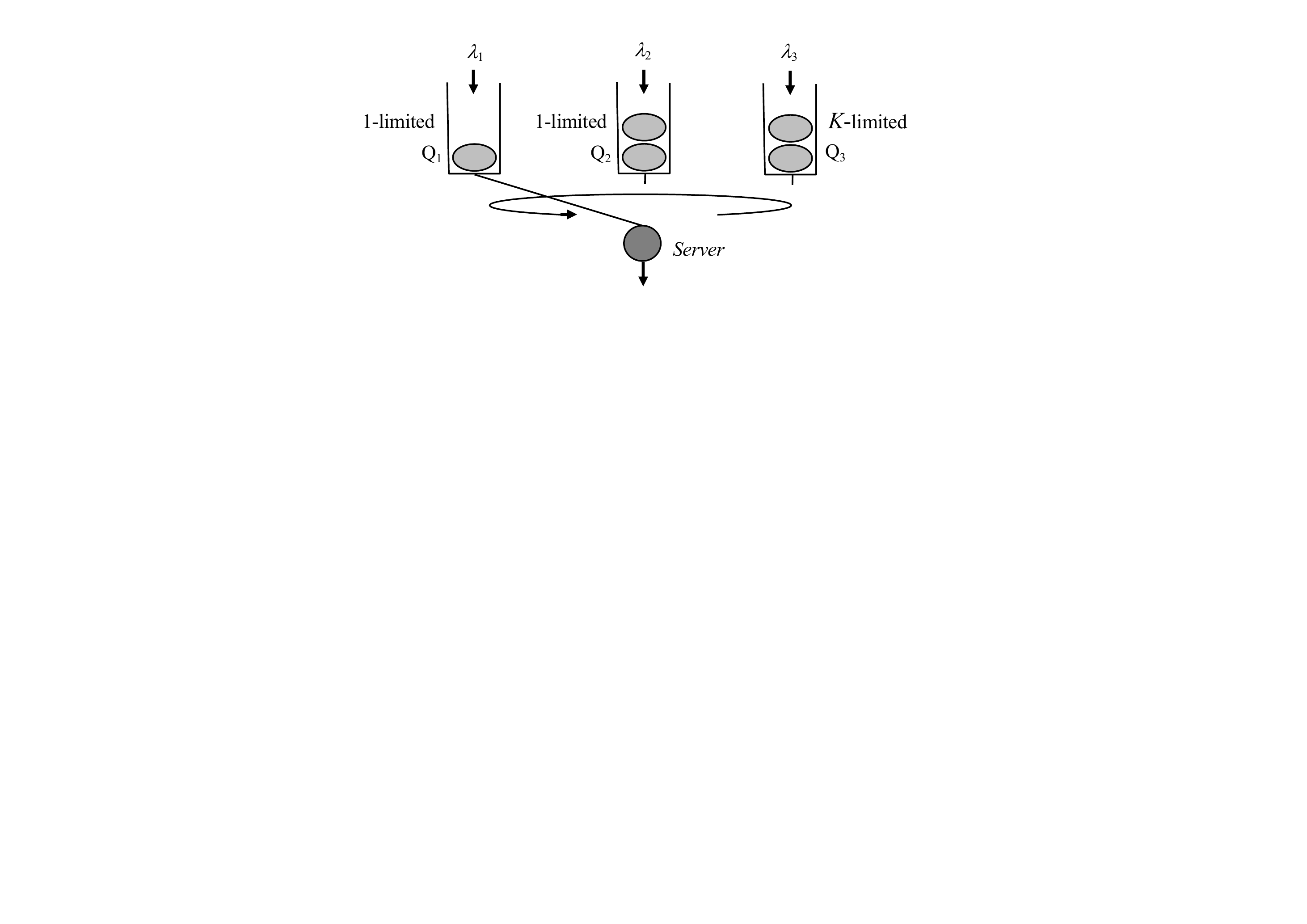} 
\caption{Polling model with three queues}
\label{fig:polling_model}
\end{center}
\end{figure}

%
%
\subsection{Single-server polling model with limited services: An example}

As a simple example, we consider a single-server polling model with three queues, in which first two queues (Q$_1$ and Q$_2$) are served according to a $1$-limited service and the other queue (Q$_3$) according to a $K$-limited service (see Fig.\ \ref{fig:polling_model}). We say that a queue is served according to a $K$-limited service if the server serves at most $K$ customers on a visit to that queue. The single server goes around the queues in order Q$_1$, Q$_2$, Q$_3$, without switchover times. 
For $i\in\{1,2,3\}$, customers arrive at Q$_i$ according to a Poisson process with intensity $\lambda_i$ and they receive exponential services with mean $1/\mu_i$. We denote by $\lambda$ the sum of the arrival rates, i.e., $\lambda=\lambda_1+\lambda_2+\lambda_3$. 
For $i\in\{1,2,3\}$, let $\tilde{X}_i(t)$ be the number of customers in Q$_i$ at time $t$ and denote by $\tilde{\bX}(t)=(\tilde{X}_1(t),\tilde{X}_2(t),\tilde{X}_3(t))$ the vector of them. Let $\tilde{J}(t)$ be the server state indicating which customer is served at time $t$. Then, $\{\tilde{\bY}(t)\}=\{(\tilde{\bX}(t),\tilde{J}(t))\}$ becomes a continuous-time three-dimensional QBD process. 
Let $S_0$ be the set of server states, which is given as $S_0=\{1,2,...,K,K+1,K+2\}$. When $\tilde{\bX}(t)>\bzero$, $\tilde{J}(t)=1$ means that the server is serving a customer in Q$_1$ and $\tilde{J}(t)=2$ that it is serving a customer in Q$_2$; for $j\ge 3$, $\tilde{J}(t)=j$ means that it is serving the $(j-2)$-th customer in Q$_3$ on a visit to that queue.  
The nonzero transition rate blocks of $\{\tilde{\bY}(t)\}$ when $\tilde{\bX}(t)>\bzero$ are given as follows:
\begin{align*}
&\tilde{A}_{1,0,0}=\lambda_1 I,\quad \tilde{A}_{0,1,0}=\lambda_2 I,\quad \tilde{A}_{0,0,1}=\lambda_3 I, \cr
&\tilde{A}_{-1,0,0} = \begin{pmatrix}
0 & \mu_1 & 0 & & 0\cr
0 & 0 & 0 &  \cdots & 0\cr
0 & 0 & 0 &  & 0\cr
& \vdots & & \ddots  & \cr
0 & 0 & 0 & \cdots & 0
\end{pmatrix},\quad 
\tilde{A}_{0,-1,0} = \begin{pmatrix}
0 & 0 & 0 & 0 & & 0\cr
0 & 0 & \mu_2 & 0 & \cdots & 0\cr
0 & 0 & 0 & 0 & & 0\cr
0 & 0 & 0 & 0 & & 0\cr
& & \vdots & & \ddots  & \cr
0 & 0 & 0 & 0 & \cdots & 0
\end{pmatrix},\cr
&\tilde{A}_{0,0,-1} = \begin{pmatrix}
0 & 0 & 0 & 0 & & 0\cr
0 & 0 & 0 & 0 & \cdots & 0\cr
0 & 0 & 0 & \mu_3 & & 0\cr
& & \vdots & & \ddots  & \cr
0 & 0 & 0 & 0 & \cdots & \mu_3 \cr
\mu_3 & 0 & 0 & 0 & \cdots & 0
\end{pmatrix},\quad
\tilde{A}_{0,0,0} 
= -\diag\!\left(\sum_{\bi\in\{-1,0,1\}^3,\,\bi\ne\bzero} \tilde{A}_{\bi} \bone \right). 
\end{align*}

Let $\{\bY(t)\}=\{(\bX(t),J(t))\}$ be a continuous-time three-dimensional MMRW on the state space $\mathbb{Z}^3\times S_0$, having $\tilde{A}_{\bi},\,\bi\in\{-1,0,1\}^3$, as the transition rate blocks. 
Let $\{\bY_n\}=\{(\bX_n,J_n)\}$ be a discrete-time three-dimensional MMRW on the state space $\mathbb{Z}^3\times S_0$, generated from $\{\bY(t)\}$ by the uniformization technique. The transition probability blocks of $\{\bY_n\}$ are given by, for $\bi\in\{-1,0,1\}^3$, 
\[
A_{\bi} = \left\{ \begin{array}{ll}
I+\frac{1}{\nu} \tilde{A}_{\bi}, & \bi=\bzero, \cr
\frac{1}{\nu} \tilde{A}_{\bi}, & \mbox{otherwise},
\end{array} \right.
\]
where we set $\nu=\lambda+\mu_1+\mu_2+\mu_3$. 
Applying Theorem \ref{th:asymptotic_any_direction} and Corollary \ref{co:asymptotic_any_direction} to this MMRW $\{\bY_n\}$, we obtain the asymptotic decay rates of the occupation measures, as described in Tables \ref{tab:table1} and \ref{tab:table2}. In both the tables, the value of $K$ varies from 1 to 20. 
Table \ref{tab:table1} deals with a symmetric case, where all the arrival intensities are set at 0.25 and all the service rates are set at 1. Due to that Q$_3$ is served according to a $K$-limited service, the absolute  value of the asymptotic decay rate in the cases where $c_3=1$ monotonically increases as the value of $K$ increases. On the other hand, that in the cases where $c_3=0$ does not always vary monotonically, for example, in the case where $\bc=(1,1,0)$, the absolute  value of the asymptotic decay rate decreases at first and then it increases. 
Table \ref{tab:table2} deals with an asymmetric case, where the arrival intensity of Q$_3$ is five times as large as those in Q$_1$ and Q$_2$, i.e., $\mu_1=\mu_2=0.1$ and $\mu_3=0.5$; all the service rates are set at 1. 
It can be seen from the table that the absolute  values of the asymptotic decay rates for all the direction vectors are nearly balanced when  $K$ is greater than 5, which means that the absolute value of the asymptotic decay rate in the case where $\bc=(1,1,0)$ is close to that in the case where $\bc=(1,0,1)$ when $K$ is set at 5; the absolute value of the asymptotic decay rate in the case where $\bc=(1,0,0)$ is close to that in the case where $\bc=(0,0,1)$ when $K$ is set at 10.

%
%
%
%

\begin{table}[htp]
\caption{Values of $\sup_{\btheta\in\Gamma} \langle \bc,\btheta \rangle$ ($\lambda_1=\lambda_2=\lambda_3=0.25$, $\mu_1=\mu_2=\mu_3=1$).}
\begin{center}
\begin{tabular}{|c|cccccc|} \hline
& \multicolumn{6}{c|}{$K$} \cr \cline{2-7}
$\bc$ & 1 & 2 & 3 & 5 & 10 & 20 \cr \hline
$(1,1,1)$ & 0.86 & 1.10 & 1.26 & 1.41 & 1.54 & 1.61 \cr
$(1,1,0)$ & 0.69 & 0.59 & 0.63 & 0.69 & 0.74 & 0.78 \cr
$(1,0,1)$ & 0.69 & 1.11 & 1.37 & 1.62 & 1.84 & 1.97 \cr
$(1,0,0)$ & 0.45 & 0.51 & 0.62 & 0.75 & 0.88 & 0.97 \cr
$(0,0,1)$ & 0.45 & 0.99 & 1.25 & 1.49 & 1.68 & 1.77 \cr \hline
\end{tabular}
\end{center}
\label{tab:table1}
\end{table}%

\begin{table}[htp]
\caption{Values of $\sup_{\btheta\in\Gamma} \langle \bc,\btheta \rangle$ ($\lambda_1=\lambda_2=0.1$, $\lambda_3=0.5$, $\mu_1=\mu_2=\mu_3=1$).}
\begin{center}
\begin{tabular}{|c|cccccc|} \hline
& \multicolumn{6}{c|}{$K$} \cr \cline{2-7}
$\bc$ & 1 & 2 & 3 & 5 & 10 & 20 \cr \hline
$(1,1,1)$ & 2.81 & 2.34 & 1.90 & 1.33 & 1.08 & 1.07 \cr
$(1,1,0)$ & 3.33 & 2.57 & 1.94 & 1.18 & 0.80 & 0.74 \cr
$(1,0,1)$ & 1.72 & 1.44 & 1.21 & 0.95 & 1.01 & 1.18 \cr
$(1,0,0)$ & 2.01 & 1.54 & 1.17 & 0.76 & 0.68 & 0.79 \cr
$(0,0,1)$ & 0.41 & 0.37 & 0.36 & 0.41 & 0.62 & 0.78  \cr \hline
\end{tabular}
\end{center}
\label{tab:table2}
\end{table}%

%
%

%
\section{Concluding remark} \label{sec:concluding}

Using the results in the paper, we can obtain lower bounds for the asymptotic decay rates of the stationary distribution in a multi-dimensional QBD process. 
Let $\{\tilde{\bY}_n\}=\{(\tilde{\bX}_n,\tilde{J}_n)\}$ be a $d$-dimensional QBD process on the state space $\mathbb{S}_+=\mathbb{Z}_+^d\times S_0$, and assume that the blocks of transition probabilities when $\tilde{\bX}_n>\bzero$ are given by  $A_{\bi},\bi\in\{-1,0,1\}^d$. 
Assume that $\{\tilde{\bY}_n\}$ is irreducible and positive recurrent and denote by $\bnu=(\nu_{\by},\by\in\mathbb{S}_+)$ the stationary distribution of the QBD process. Further assume that the blocks $A_{\bi},\bi\in\{-1,0,1\}^d,$ satisfy the property corresponding to Assumption \ref{as:P_irreducible}. 
%
%
%
%
%
%
%
Then, by Theorem \ref{th:asymptotic_any_direction} and Corollary \ref{co:asymptotic_any_direction}, for any vector $\bc$ of nonnegative integers such that $\bc\ne\bzero$ and for every $j\in S_0$, a lower bound for the asymptotic decay rate of the stationary distribution in the QBD process in the direction specified by $\bc$ is given as follows:
\begin{equation}
\liminf_{k\to\infty} \frac{1}{k} \log \nu_{(k \bc,j)} 
\ge - \sup\{ \langle \bc, \btheta \rangle; \cp(A_*(\btheta)) > 1,\,\btheta\in\mathbb{R}^d \},
\label{eq:liminf_nu}
\end{equation}
where $A_*(\btheta)=\sum_{\bi\in\{-1,0,1\}^d} e^{\langle \bi,\btheta \rangle} A_{\bi}$. 
Since the QBD process is a reflected Markov additive process, this inequality is an answer to Conjecture 5.1 of \cite{Miyazawa12} in a case with background states.

%
%

%
%
\appendix
%
\section{Convexity of the reciprocal of a convergence parameter} \label{sec:cp_convex}

Let $n$ be a positive integer and $\bx=(x_1,x_2,...,x_n)\in\mathbb{R}^n$. 
We say that a positive function $f(\bx)$ is log-convex in $\bx$ if $\log f(\bx)$ is convex in $\bx$, and denote by $\mathfrak{S}_n$ the class of all log-convex functions of $n$ variables, together with the function identically zero. Note that, $\mathfrak{S}_n$ is closed under addition, multiplication, raising to any positive power, and ``$\limsup$" operation. Furthermore, a log-convex function is a convex function. 

Let $F(\bx)=(f_{ij}(\bx),i,j\in\mathbb{Z}_+)$ be a matrix function each of whose elements belongs to the class $\mathfrak{S}_n$, i.e., for every $i,j\in\mathbb{Z}_+$, $f_{i,j}\in\mathfrak{S}_n$. In \cite{Kingman61}, it has been proved that when $n=1$ and $F(x)$ is a square matrix of a finite dimension, the maximum eigenvalue of $F(x)$ is a log-convex function in $x$. Analogously, we obtain the following lemma. 
\begin{lemma} \label{le:cp_convex}
For every $\bx\in\mathbb{R}^n$, assume all iterates of $F(\bx)$ is finite and $F(\bx)$ is irreducible. Then, the reciprocal of the convergence parameter of $F(\bx)$, $\cp(F(\bx))^{-1}$, is log-convex in $\bx$ or identically zero.
\end{lemma}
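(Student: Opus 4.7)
The plan is to adapt Kingman's classical argument \cite{Kingman61} that the Perron eigenvalue of a finite non-negative matrix is log-convex, replacing the role of the maximum eigenvalue with $\cp(F(\bx))^{-1}$, which serves as the analogue of the spectral radius in the countable-dimensional irreducible setting. The two ingredients that drive this are (i) closure properties of the class $\mathfrak{S}_n$ of log-convex functions, and (ii) a Perron-type formula expressing $\cp(F(\bx))^{-1}$ as a $\limsup$ of entry-wise iterates.

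First I would argue entry-wise. For each $n\ge 1$ and $i,j\in\mathbb{Z}_+$,
\[
[F(\bx)^n]_{i,j} \;=\; \sum_{i_1,\dots,i_{n-1}\in\mathbb{Z}_+} f_{i,i_1}(\bx)\, f_{i_1,i_2}(\bx)\cdots f_{i_{n-1},j}(\bx),
\]
a countable sum of products of elements of $F(\bx)$. Each factor lies in $\mathfrak{S}_n\cup\{0\}$, so by closure under multiplication each summand is in $\mathfrak{S}_n\cup\{0\}$. By the stated closure of $\mathfrak{S}_n$ under addition and ``$\limsup$'' (which handles countable sums via partial sums), together with the hypothesis that every iterate is finite, we get $[F(\bx)^n]_{i,j}\in\mathfrak{S}_n\cup\{0\}$. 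Closure under raising to a positive power then yields $([F(\bx)^n]_{i,j})^{1/n}\in\mathfrak{S}_n\cup\{0\}$ for every $n$.

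Next I would invoke the Perron-type characterization for irreducible non-negative matrices (Vere-Jones/Seneta $R$-theory): under the standing hypothesis that $F(\bx)$ is irreducible with finite iterates, for any fixed $i,j$ in the (unique) communicating class one has
\[
\cp(F(\bx))^{-1} \;=\; \limsup_{n\to\infty}\bigl([F(\bx)^n]_{i,j}\bigr)^{1/n}.
\]
Applying once more the closure of $\mathfrak{S}_n$ under $\limsup$, the right-hand side is either log-convex in $\bx$ or identically zero, giving the claim. (If the index $j$ happens to lie outside the class reachable from $i$ at $\bx$, irreducibility of $F(\bx')$ at every other $\bx'$ forces either consistency across $\bx$ or the trivial conclusion.)

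The main obstacle I anticipate is justifying the $\limsup$-formula in the countable-dimension setting uniformly in $\bx$: one needs that the same pair $(i,j)$ works for all $\bx$, which is automatic from irreducibility being assumed at every $\bx$, but care is required to rule out pathologies where the $\limsup$ is attained along $\bx$-dependent subsequences. A minor secondary point is the degenerate case where $\cp(F(\bx))^{-1}$ vanishes at some but not all $\bx$: here one uses that the pointwise $\limsup$ of a family of log-convex (equivalently, $\log$-convex-in-the-extended-sense) functions remains log-convex wherever it is finite and positive, and the lemma accommodates the remaining case by allowing the ``identically zero'' alternative.
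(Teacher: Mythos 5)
Your proposal is correct and takes essentially the same approach as the paper: show each $[F(\bx)^n]_{i,j}$ belongs to $\mathfrak{S}_n\cup\{0\}$ via the closure properties of $\mathfrak{S}_n$, then invoke the Seneta/Vere-Jones fact that for an irreducible matrix all entries share a common convergence radius together with Cauchy--Hadamard to write $\cp(F(\bx))^{-1}$ as a $\limsup$ of $n$th roots, and finally apply closure under positive powers and $\limsup$. The paper organizes the entry-wise step as an induction on $n$ whereas you expand the $n$-step path sum directly, but these are equivalent; also, your final worry about ``$\bx$-dependent subsequences'' is a non-issue, since the closure of $\mathfrak{S}_n$ under $\limsup$ is a pointwise statement (a decreasing limit of suprema of convex functions is convex) and does not require the $\limsup$ to be attained along a common subsequence.
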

\begin{proof}
For $k\ge 0$, we denote by $f^{(k)}_{i,j}(\bx)$ the $(i,j)$-element of $F(\bx)^k$. 
First, we show that, for every $k\ge 1$ and for every $i,j\in\mathbb{Z}_+$, $f^{(k)}_{i,j}(\bx)\in\mathfrak{S}_n$. It is obvious when $k=1$. Suppose that it holds for $k$. Then, we have, for every $i,j\in\mathbb{Z}_+$,  
\begin{equation}
f^{(k+1)}_{i,j}(\bx) = \lim_{m\to\infty} \sum_{l=0}^m f^{(k)}_{i,l}(\bx)\, f_{l,j}(\bx), 
\end{equation}
and this leads us to $f^{(k+1)}_{i,j}(\bx)\in\mathfrak{S}_n$ since $\mathfrak{S}_n$ is closed under addition, multiplication and ``$\limsup$" (``$\lim$") operation. Therefore, for every $k\ge 1$, every element of $F(\bx)^n$ belongs to $\mathfrak{S}_n$. 

Next, we note that, by Theorem 6.1 of \cite{Seneta06}, since $F(\bx)$ is irreducible, all elements of the power series $\sum_{k=0}^\infty z^k F(\bx)^k$ have the common convergence radius (convergence parameter), which is denoted by $\cp(F(\bx))$. 
By the Cauchy-Hadamard theorem, we have, for any $i,j\in\mathbb{Z}_+$, 
\begin{equation}
\cp(F(\bx))^{-1} = \limsup_{k\to\infty} \bigl(f^{(k)}_{i,j}(\bx)\bigr)^{1/k}, 
\end{equation}
and this implies $\cp(F(\bx))^{-1}\in\mathfrak{S}_n$ since $\bigl(f^{(k)}_{i,j}(\bx)\bigr)^{1/k}\in\mathfrak{S}_n$ for any $k\ge 1$. 
\end{proof}

%
\section{Proof of Lemma \ref{le:RandGmatrix_equations}} \label{sec:proof_RandGmatrix_equations}

\begin{proof}
{\it (i)}\quad 
For $n\ge 1$, $\scrI_{D,1,n}$ and $\scrI_{U,1,n}$ satisfy 
\begin{align*}
\scrI_{D,1,n} &= \biggl\{\bi_{(n)}\in\{-1,0,1\}^n; \sum_{l=1}^k i_l\ge 0\ \mbox{for $k\in\{1,2,...,n-2\}$},\ \sum_{l=1}^{n-1} i_l=0\ \mbox{and}\ i_n=-1 \biggr\} \cr
&= \{(\bi_{(n-1)},-1); \bi_{(n-1)}\in\scrI_{n-1} \}, \\
\scrI_{U,1,n} &= \biggl\{\bi_{(n)}\in\{-1,0,1\}^n; i_1=1,\,\sum_{l=2}^k i_l\ge 0\ \mbox{for $k\in\{2,...,n-1\}$}\ \mbox{and} \sum_{l=2}^n i_l=0 \biggr\} \cr 
&= \{(1,\bi_{(n-1)}); \bi_{(n-1)}\in\scrI_{n-1} \},
\end{align*}
where $\bi_{(n)}=(i_1,i_2,...,i_n)$. Hence, by the Fubini's theorem, we have, for $i,j\in\mathbb{Z}_+$, 
\begin{align*}
&[G]_{i,j} = \sum_{n=1}^\infty \sum_{k=0}^\infty [Q_{0,0}^{(n-1)}]_{i,k}\, [ A_{-1}]_{k,j}  = [N  A_{-1}]_{i,j}, \\
&[R]_{i,j} = \sum_{n=1}^\infty \sum_{k=0}^\infty [ A_1]_{i,k} [Q_{0,0}^{(n-1)}]_{k,j} = [ A_1 N]_{i,j}. 
\end{align*}. 

{\it (ii)}\quad 
We prove equation (\ref{eq:Gmatrix_equation0}). 
In a manner similar to that used in (i), we have, for $n\ge 3$, 
\begin{align*}
\scrI_{D,1,n} 
&= \{(0,\bi_{(n-1)}); \bi_{(n-1)}\in\scrI_{D,1,n-1} \} \cup \{(1,\bi_{(n-1)}):\ \bi_{(n-1)}\in\scrI_{D,2,n-1} \}, \\
%
%
\scrI_{D,2,n} 
&= \bigcup_{m=1}^{n-1} \{(\bi_{(m)},\bi_{(n-m)}); \bi_{(m)}\in\scrI_{D,1,m}\ \mbox{and}\ \bi_{(n-m)}\in\scrI_{D,1,n-m} \}. 
\end{align*}
Hence, we have, for $n\ge 3$, 
\begin{align*}
D^{(n)} 
&= A_0 D^{(n-1)} + A_1 \sum_{\bi_{(n-1)}\in\scrI_{D,2,n-1}} A_{i_1} A_{i_2} \cdots A_{i_{n-1}} \cr
&= A_0 D^{(n-1)} + A_1 \sum_{m=1}^{n-1} D^{(m)} D^{(n-m-1)}, 
\end{align*} 
and by the Fubini's theorem, we obtain, for $i,j\in\mathbb{Z}_+$, 
\begin{align*}
[G]_{i,j} 
&= [ D^{(1)}]_{i,j} + \sum_{n=2}^\infty \sum_{k=0}^\infty [ A_0]_{i,k} [D^{(n-1)}]_{k,j} 
+ \sum_{n=3}^\infty \sum_{m=1}^{n-2} \sum_{k=0}^\infty \sum_{l=0}^\infty [ A_1]_{i,k} [D^{(m)}]_{k,l} [D^{(n-m-1)}]_{l,j} \cr
&= [ A_{-1}]_{i,j} + [ A_0 G]_{i,j} + [ A_1 G^2]_{i,j}, 
\end{align*}
where we use the fact that $D^{(1)}=A_{1}$ and $D^{(2)}=A_0 A_{-1}=A_0 D^{(1)}$. 
Equation (\ref{eq:Rmatrix_equation0}) can analogously be proved. 

{\it (iii)}\quad 
We prove equation (\ref{eq:NandH_relation}). 
In a manner similar to that used in (i), we have, for $n\ge 1$, 
\begin{align*}
\scrI_{n} 
%
%
&= \{(0,\bi_{(n-1)}):\ \bi_{(n-1)}\in\scrI_{n-1} \} \cr
&\qquad \cup \left(\cup_{m=2}^n \{(1,\bi_{(m-1)},\bi_{(n-m)}):\ \bi_{(m-1)}\in\scrI_{D,1,m-1}, \bi_{(n-m)}\in\scrI_{n-m} \} \right), \\
\scrI_{n} &= \{(\bi_{(n-1)},0):\ \bi_{(n-1)}\in\scrI_{n-1} \} \cr
&\qquad \cup \left(\cup_{m=0}^{n-2} \{(\bi_{(m)},1,\bi_{(n-m-1)}):\ \bi_{(m)}\in\scrI_{m}, \bi_{(n-m-1)}\in\scrI_{D,1,n-m-1} \} \right). 
\end{align*}
Hence, we have, for $n\ge 1$, 
\begin{align*}
&Q_{0,0}^{(n)} = A_0 Q_{0,0}^{(n-1)} + \sum_{m=2}^n A_1 D^{(m-1)} Q_{0,0}^{(n-m)}, \\
&Q_{0,0}^{(n)} = Q_{0,0}^{(n-1)} A_0 + \sum_{m=0}^{n-2} Q_{0,0}^{(m)} A_1 D^{(m-n-1)} , 
\end{align*}
and by the Fubini's theorem, we obtain, for $i,j\in\mathbb{Z}_+$, 
\begin{align*}
[N]_{i,j} 
&= \delta_{ij} 
+ \sum_{n=1}^\infty \sum_{k=0}^\infty [A_0]_{i,k} [Q_{0,0}^{(n-1)}]_{k,j} 
+ \sum_{n=2}^\infty \sum_{m=2}^n \sum_{k=0}^\infty \sum_{l=0}^\infty [A_1]_{i,k} [D^{(m-1)}]_{k,l} [Q_{0,0}^{(n-m)}]_{l,j} \cr
&= \delta_{i,j} + [A_0 N]_{i,j} + [A_1 G N]_{i,j}, \\
[N]_{i,j} 
&= \delta_{ij} 
+ \sum_{n=1}^\infty \sum_{k=0}^\infty [Q_{0,0}^{(n-1)}]_{i,k} [A_0]_{k,j} 
+ \sum_{n=2}^\infty \sum_{m=0}^{n-2} \sum_{k=0}^\infty \sum_{l=0}^\infty [Q_{0,0}^{(m)}]_{i,k} [A_1]_{k,l} [D^{(n-m-1)}]_{l,j}  \cr
&= \delta_{i,j} + [N A_0]_{i,j} + [N A_1 G]_{i,j},
\end{align*}
where $\delta_{i,j}$ is the Kronecker delta. This leads us to equation (\ref{eq:NandH_relation}). 
\end{proof}

%
\section{A sufficient condition ensuring $\chi(\theta)$ is unbounded} \label{sec:chi_unbounded}

\begin{proposition} \label{pr:chi_unbounded}
Assume $P$ is irreducible, then $\chi(\theta)$ is unbounded in both the directions, i.e., $\lim_{\theta\to -\infty} \chi(\theta) = \lim_{\theta\to\infty} \chi(\theta)=\infty$. 
\end{proposition}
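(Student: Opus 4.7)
The plan is to exhibit, for any fixed phase $j\in\mathbb{Z}_+$, a finite cycle in the phase graph of $A_*$ that returns to $j$ with strictly positive net level change and, symmetrically, another such cycle with strictly negative net level change. Each cycle of length $n$ with net level change $s$ produces a summand of the form $e^{s\theta}\cdot(\text{positive weight})$ in $[A_*(\theta)^n]_{j,j}$; repeating it yields geometric growth in that diagonal entry, which forces $\chi(\theta)\to\infty$ in the appropriate direction via the Cauchy--Hadamard formula for the convergence parameter.

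First I would use the irreducibility of $P$ to locate the two cycles. Since $P$ acts on $\mathbb{Z}\times\mathbb{Z}_+$ and all its one-step transitions are encoded in the blocks $A_{-1},A_0,A_1$, irreducibility applied to the pair of states $(0,j)$ and $(1,j)$ yields some $n_+\ge 1$ and a path
\[
(0,j)=(x_0,j_0)\to(x_1,j_1)\to\cdots\to(x_{n_+},j_{n_+})=(1,j)
\]
all of whose transitions have positive probability. Setting $i_k=x_k-x_{k-1}\in\{-1,0,1\}$, this path satisfies $\sum_{k=1}^{n_+}i_k=1$ and
\[
c_+:=[A_{i_1}A_{i_2}\cdots A_{i_{n_+}}]_{j,j}>0.
\]
Applying irreducibility to the pair $(0,j)$ and $(-1,j)$ instead produces, by the same reasoning, a length $n_-\ge 1$ and a positive weight $c_->0$ attached to an analogous phase cycle through $j$ with net level change $-1$.

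Second, I would feed these cycles into the expansion of $A_*(\theta)^n$. The product $A_*(\theta)^{n_+}$ is a sum, over all length-$n_+$ sequences in $\{-1,0,1\}$, of products of the $A_i$ weighted by $e^{(\sum i_k)\theta}$; the summand corresponding to the particular sequence $(i_1,\ldots,i_{n_+})$ above contributes at least $e^{\theta}c_+$ to $[A_*(\theta)^{n_+}]_{j,j}$. Concatenating this cycle $m$ times and bounding below by the diagonal term through $j$ at every return yields
\[
[A_*(\theta)^{mn_+}]_{j,j}\ \ge\ c_+^{m}\,e^{m\theta}.
\]
Since $P$ is irreducible, $A_*=A_*(0)$ is irreducible, and $A_*(\theta)$ has the same support as $A_*$, so $A_*(\theta)$ is irreducible for every $\theta\in\mathbb{R}$. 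The Cauchy--Hadamard characterization (as used in Lemma \ref{le:cp_convex}) then gives
\[
\chi(\theta)\ =\ \cp(A_*(\theta))^{-1}\ =\ \limsup_{n\to\infty}\bigl([A_*(\theta)^n]_{j,j}\bigr)^{1/n}\ \ge\ \bigl(c_+\,e^{\theta}\bigr)^{1/n_+},
\]
so that $\chi(\theta)\to\infty$ as $\theta\to+\infty$. The symmetric bound $\chi(\theta)\ge(c_-\,e^{-\theta})^{1/n_-}$ handles $\theta\to-\infty$.

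I expect no serious obstacle here: the one subtle point is to ensure that a positive-probability path in $P$ from $(0,j)$ to $(\pm 1,j)$ really does unwind into a phase cycle at $j$ in $A_*$ with net level change $\pm 1$ and positive weight $c_\pm$, but this is immediate once one reads off the level increments $i_k$ along the path. Irreducibility of $A_*(\theta)$ for every $\theta\in\mathbb{R}$ is used only through the fact that its support coincides with that of $A_*$, which is automatic from the definition of $A_*(\theta)$.
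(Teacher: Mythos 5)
Your proof is correct and follows essentially the same route as the paper's: locate a return cycle at a fixed phase $j$ with net level change $+1$ (and $-1$) via irreducibility of $P$, lower-bound the corresponding diagonal entry of $A_*(\theta)^{n_0}$ by $c\,e^{\pm\theta}$, iterate the cycle, and invoke the Cauchy--Hadamard characterization of $\cp(A_*(\theta))$. No substantive differences from the paper's argument.
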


\begin{proof}
Note that, since $P$ is irreducible, $A_*$ is also irreducible. 
For $n\ge 1$, $j\in\mathbb{Z}_+$ and $\theta\in\mathbb{R}$, $A_*(\theta)^n$ satisfies 
\begin{align}
&[ A_*(\theta)^n ]_{jj}
=\ \sum_{\bi_{(n)}\in\{-1,0,1\}^n} [ A_{i_1} A_{i_2} \times \cdots \times A_{i_n} ]_{jj}\,e^{\theta \sum_{k=1}^n i_k},
\label{eq:Asesn}
\end{align}
where $\bi_{(n)}=(i_1,i_2,...,i_n)$.
Since $P$ is irreducible, there exist $n_0>1$ and $\bi_{(n_0)}\in\{-1,0,1\}^{n_0}$ such that $[ A_{i_1} A_{i_2} \times \cdots \times A_{i_{n_0}} ]_{jj}>0$ and $\sum_{k=1}^{n_0} i_k =1$. For such a $n_0$, we have $[A_*(\theta)^{n_0}]_{jj}\ge c e^\theta$ for some $c>0$. 
This implies that, for any $m\ge 1$, $[A_*(\theta)^{n_0 m}]_{jj}\ge c^m e^{m \theta}$ and we have 
\[
\chi(\theta)
= \limsup_{m\to\infty} ([A_*(\theta)^m]_{jj})^{\frac{1}{m}} 
\ge \limsup_{m\to\infty} ([A_*(\theta)^{n_0 m}]_{jj})^{\frac{1}{n_0 m}}
\ge c^{\frac{1}{n_0}} e^{\frac{\theta}{n_0}}. 
\]
Therefore, $\lim_{\theta\to\infty} \chi(\theta)=\infty$. 
Analogously, we can obtain $\chi(\theta)\ge c^{\frac{1}{n_0}} e^{-\frac{\theta}{n_0}}$ for some $n_0\ge 1$ and $c>0$, and this implies that $\lim_{\theta\to -\infty} \chi(\theta)=\infty$. 
\end{proof}

%
\section{Proof of Proposition \ref{pr:R_u} and Corollary \ref{co:cpRlimit}} \label{sec:proof_R_u}

%
%
\begin{proof}[Proof of Proposition \ref{pr:R_u}]
Let $S_1$ be the set of indexes of nonzero rows of $A_1$, i.e., $S_1=\{k\in\mathbb{Z}_+; \mbox{the $k$-th row of $A_1$ is nonzero} \}$, and $S_2=\mathbb{Z}_+\setminus S_1$.  For $i\in\{-1,0,1\}$, reorder the rows and columns of $A_i$ so that it is represented as
\[
A_i =\begin{pmatrix}
A_{i,11} & A_{i,12} \cr
A_{i,21} & A_{i,22}
\end{pmatrix},
\]
where $A_{i,11}=( [A_i]_{k,l}; k,l\in S_1 )$, $A_{i,12}=( [A_i]_{k,l}; k\in S_1, l\in S_2 )$, $A_{i,21}=( [A_i]_{k,l}; k\in S_2, l\in S_1 )$ and $A_{i,22}=( [A_i]_{k,l}; k,l\in S_2 )$. By the definition of $S_1$, every row of $\begin{pmatrix} A_{1,11} & A_{1,12} \end{pmatrix}$ is nonzero and we have $A_{1,21}=O$ and $A_{1,22}=O$. 
Since $Q$ is irreducible and $R$ is finite, $N$ is also finite and positive. Hence, $R$ is given as
\begin{equation}
R = A_1 N 
= \begin{pmatrix}
R_{11} & R_{12} \cr
O & O
\end{pmatrix},
\label{eq:R11R12}
\end{equation}
where $R_{11}=( [R]_{k,l}; k,l\in S_1 )$ is positive and hence irreducible; $R_{12}=( [R]_{k,l}; k\in S_1, l\in S_2 )$ is also positive. 
Since $R_{11}$ is a submatrix of $R$, we have $\cp(R_{11})\ge \cp(R)$. 

We derive an inequality with respect to $R_{11}$ and $R_{12}$. From (\ref{eq:Rmatrix_equation0}), we obtain $R\ge R^2 A_{-1} + R A_0$ and, from this inequality, 
\begin{align}
&R_{11} \ge R_{11} R_{12} A_{-1,21} + R_{12} A_{0,21}, \label{eq:R11_ineq} \\
&R_{12} \ge R_{11} R_{12} A_{-1,22} + R_{12} A_{0,22}. \label{eq:R12_ineq}
\end{align}
For $n\ge 1$ and $\bi_{(n)}=(i_1,i_2,...,i_n)\in\{-1,0\}^n$, define $A_{\bi_{(n)},22}$ and $\Vert \bi_{(n)} \Vert$ as
\[
A_{\bi_{(n)},22} = A_{i_n,22}\times A_{i_{n-1},22}\times \cdots \times A_{i_1,22},\quad 
\Vert \bi_{(n)} \Vert = \sum_{k=1}^n |i_k|.
\] 
Then, by induction using (\ref{eq:R12_ineq}), we obtain, for $n\ge 1$, 
\begin{equation}
R_{12} \ge \sum_{\bi_{(n)}\in\{-1,0\}^n} R_{11}^{\Vert \bi_{(n)} \Vert} R_{12} A_{\bi_{(n)},22}, 
\label{eq:R12_ineq2}
\end{equation}
and this and (\ref{eq:R11_ineq}) lead us to, for $n\ge 1$,  
\begin{equation}
R_{11} \ge \sum_{\bi_{(n)}\in\{-1,0\}^n} R_{11}^{\Vert \bi_{(n)} \Vert} R_{12} A_{\bi_{(n-1)},22} A_{i_n,21}, 
\label{eq:R11_ineq2}
\end{equation}
where $A_{\bi_{(0)},22} = I$. 
We note that since $A_*=A_{-1}+A_0+A_1$ is irreducible, $A_{1,21}=O$ and $A_{1,22}=O$, for every $k\in S_2$ and $l\in S_1$, there exist $n_0\ge 1$ and $\bi_{(n_0)}\in\{-1,0\}^{n_0}$ such that $[A_{\bi_{(n_0-1)},22} A_{i_{n_0},21}]_{k,l}>0$. 

Let $\alpha$ be the convergence parameter of $R_{11}$. Since $R_{11}$ is irreducible, $R_{11}$ is either $\alpha$-recurrent or $\alpha$-transient. 
First, we assume $R_{11}$ is $\alpha$-recurrent. Then, there exists a positive vector $\bu_1$ such that $\alpha \bu_1^\top R_{11}=\bu_1^\top$. If $\bu_1^\top R_{12}<\infty$, then $\bu^\top=(\bu_1^\top, \alpha \bu_1^\top R_{12})$ satisfies $\alpha \bu^\top R = \bu^\top$ and we obtain $\cp(R)\ge\alpha=\cp(R_{11})$. Since $\cp(R)\le\cp(R_{11})$, this implies $\alpha=\cp(R)=e^{\bar{\theta}}$ and we obtain statement (i) of the proposition. We, therefore, prove $\bu_1^\top R_{12}<\infty$. 
Suppose, for some $k\in S_2$, the $k$-th element of $\bu_1^\top R_{12}$ diverges. For this $k$ and any $l\in S_1$, there exist $n_0\ge 1$ and $\bi_{(n_0)}\in\{-1,0\}^{n_0}$ such that $[A_{\bi_{(n_0-1)},22} A_{i_{n_0},21}]_{k,l}>0$. Hence, from (\ref{eq:R11_ineq2}), we obtain 
\begin{align*}
[\alpha^{-1} \bu_1^\top]_l 
= [\bu_1^\top R_{11}]_l 
&\ge [\bu_1^\top R_{11}^{\Vert \bi_{(n_0)} \Vert} R_{12}]_k [A_{\bi_{(n_0-1)},22} A_{i_{n_0},21}]_{k,l}  \cr
&= \alpha^{-\Vert \bi_{(n_0)} \Vert} [\bu_1^\top R_{12}]_k [A_{\bi_{(n_0-1)},22} A_{i_{(n_0)},21}]_{k,l}.
\end{align*}
This contradicts $\bu_1$ is finite and we see $\bu_1^\top R_{12}$ is finite. 

Next, we assume $R_{11}$ is $\alpha$-transient, i.e., $\sum_{n=0}^\infty \alpha^n R_{11}^n<\infty$. We have 
\[
\sum_{n=0}^\infty \alpha^n R^n 
=\begin{pmatrix}
\sum_{n=0}^\infty \alpha^n R_{11}^n & \sum_{n=1}^\infty \alpha^n R_{11}^{n-1} R_{12} \cr
O & O
\end{pmatrix}. 
\]
Hence, in order to prove $\sum_{n=0}^\infty \alpha^n R^n<\infty$, it suffices to demonstrate $\sum_{n=1}^\infty \alpha^n R_{11}^{n-1} R_{12}<\infty$. 
Suppose, for some $k\in S_1$ and some $l\in S_2$, the $(k,l)$-element of $\sum_{n=1}^\infty \alpha^n R_{11}^{n-1} R_{12}$ diverges. For this $l$ and any $m\in S_1$, there exist $n_0\ge 1$ and $\bi_{(n_0)}\in\{-1,0\}^{n_0}$ such that $[A_{\bi_{(n_0-1)},22} A_{i_{n_0},21}]_{l,m}>0$. 
For such an $n_0$ and $\bi_{(n_0)}$, we obtain from (\ref{eq:R11_ineq2}) that, for $n\ge 1$,  
\[
R_{11}^n = R_{11}^{n-1} R_{11} \ge R_{11}^{\Vert \bi_{(n_0)} \Vert} R_{11}^{n-1} R_{12} A_{\bi_{(n_0-1)},22} A_{i_{n_0},21}, 
\]
where every diagonal element of $R_{11}^{\Vert \bi_{(n_0)} \Vert}$ is positive. From this inequality, we obtain 
\[
\left[\sum_{n=1}^\infty \alpha^n R_{11}^n\right]_{k,m} 
\ge \left[ R_{11}^{\Vert \bi_{(n_0)} \Vert} \right]_{k,k} \left[ \sum_{n=1}^\infty \alpha^n R_{11}^{n-1} R_{12} \right]_{k,l} \left[ A_{\bi_{(n_0-1)},22} A_{i_{n_0},21} \right]_{l,m}.
\]
This contradicts $R_{11}$ is $\alpha$-transient and we obtain $\sum_{n=0}^\infty \alpha^n R^n<\infty$. Furthermore, this leads us to $\cp(R)\ge \alpha=\cp(R_{11})$ and, from this and $\cp(R)\le\cp(R_{11})$, we have $\alpha=\cp(R)=e^{\bar{\theta}}$. As a result, we obtain statement (ii) of the proposition and this completes the proof. 
\end{proof}

%
%
\begin{proof}[Proof of Corollary \ref{co:cpRlimit}]
In a manner similar to that used in the proof of Proposition \ref{pr:R_u}, let $S_1$ be the set of indexes of nonzero rows of $A_1$ and $S_2=\mathbb{Z}_+\setminus S_1$. Then, reordering the rows and columns of $R$  according to $S_1$ and $S_2$, we obtain $R$ given by expression (\ref{eq:R11R12}), where $R_{11}=( [R]_{k,l}; k,l\in S_1 )$ is positive and hence irreducible and $R_{12}=( [R]_{k,l}; k\in S_1, l\in S_2 )$ is also positive. From the proof of Proposition \ref{pr:R_u},  we know that $ \cp(R)=\cp(R_{11})=e^{\bar{\theta}}$. 
By these facts and the Cauchy-Hadamard theorem, we obtain, for $i,j\in S_1$, $k\in S_2$ and $n\ge 1$, 
\begin{align}
&\limsup_{n\to\infty} \left( [R^n]_{i,j} \right)^{\frac{1}{n}}
= \limsup_{n\to\infty} \left( [R_{11}^n]_{i,j} \right)^{\frac{1}{n}}
= e^{-\bar{\theta}}, \label{eq:limsupR11} \\
&\limsup_{n\to\infty} \left( [R^n]_{i,k} \right)^{\frac{1}{n}} \le e^{-\bar{\theta}}.
\end{align}
Since $[R_{11}^n]_{i,i}$ is subadditive with respect to $n$, i.e., $[R_{11}^{n_1+n_2}]_{i,i}\ge [R_{11}^{n_1}]_{i,i}\, [R_{11}^{n_2}]_{i,i}$ for $n_1,n_2\in\mathbb{Z}_+$, the limit sup in equation (\ref{eq:limsupR11}) can be replaced with the limit when $i=j$ (see, e.g., Lemma A.4 of \cite{Seneta06}). 
Furthermore, we have, for $i,j\in S_1$, $k\in S_2$ and $n\ge 1$, 
\begin{align}
&\liminf_{n\to\infty} \left( [R^n]_{i,j} \right)^{\frac{1}{n}} 
= \liminf_{n\to\infty} \left( [R_{11}^{n-1} R_{11}]_{i,j} \right)^{\frac{1}{n}} 
\ge \liminf_{n\to\infty} \left( [R_{11}^{n-1}]_{i,i} [R_{11}]_{i,j} \right)^{\frac{1}{n}} 
= e^{-\bar{\theta}}, \\
&\liminf_{n\to\infty} \left( [R^n]_{i,k} \right)^{\frac{1}{n}} 
= \liminf_{n\to\infty} \left( [R_{11}^{n-1} R_{12}]_{i,k} \right)^{\frac{1}{n}} 
\ge \liminf_{n\to\infty} \left( [R_{11}^{n-1}]_{i,i} [R_{12}]_{i,k} \right)^{\frac{1}{n}} 
= e^{-\bar{\theta}}.
\end{align}
Hence, we obtain equation (\ref{eq:cpRlimit}). It is obvious by expression (\ref{eq:R11R12}) that, for $i\in S_2$ and $j\in\mathbb{Z}_+$, $[R^n]_{i,j}=0$. 
\end{proof}

\end{document}